\documentclass[10pt]{amsart}
\usepackage{kantlipsum} 
\usepackage{url}
\calclayout
\usepackage{mathrsfs}
\usepackage{amssymb}
\usepackage{enumitem}
 \usepackage{amsmath}
\usepackage{physics}
\DeclareFontFamily{U}{wncy}{}
\DeclareFontShape{U}{wncy}{m}{n}{<->wncyr10}{}
\DeclareSymbolFont{mcy}{U}{wncy}{m}{n}
\DeclareMathSymbol{\Sh}{\mathord}{mcy}{"58} 

\usepackage{color}

\newtheorem*{claim}{Claim}
\theoremstyle{plain}
\newtheorem{thm}{Theorem}[section]

\newtheorem{lem}[thm]{Lemma}
\newtheorem{cor}[thm]{Corollary}

\newtheorem{prop}[thm]{Proposition}

\theoremstyle{definition}
\newtheorem{defn}[thm]{Definition}

\newtheorem*{remark}{Remark}

\theoremstyle{remark}
\newtheorem{rem}[thm]{Remark}

	\theoremstyle{thm}

\newtheorem{Theorem}{Theorem}

\newtheorem{Corollary}[Theorem]{Corollary}

\newcommand{\cf}{\textit{cf. }}
\newcommand{\ie}{\textit{i.e. }}

\newcommand{\eg}{\textit{eg. }}
\DeclareMathOperator{\PGL}{PGL}
\DeclareMathOperator{\Q}{\mathbf{Q}}
\DeclareMathOperator{\Z}{\mathbf{Z}}

\DeclareMathOperator{\R}{\mathbf{R}}
\DeclareMathOperator{\C}{\mathbf{C}}

\DeclareMathOperator{\cusps}{cusps}
\DeclareMathOperator{\Cl}{Cl}
\DeclareMathOperator{\supp}{Supp}

\DeclareMathOperator{\SL}{SL}
\DeclareMathOperator{\Gal}{Gal}

\DeclareMathOperator{\Sel}{Sel}

\def\Dd{\mathcal{D}}

\def\q{{\mathfrak q}}

\def\l{{\mathfrak l}}
\def\Ll{{\mathfrak L}}

\author{Emmanuel Lecouturier}
 \address{Westlake University, Institute for Theoretical Sciences, Yungu Campus, 600 Dunyu Rd., Xihu District, Hangzhou, 310030, Zhejiang province, PR China}
\email{elecoutu@westlake.edu.cn}

\author{Christian Maire}
 \address{Université Marie et Louis Pasteur,  CNRS, Institut FEMTO-ST, F-25000 Besançon, France}
\email{christian.maire@univ-fcomte.fr}

\title{Indivisibility of Ray Class groups of real quadratic fields}
\begin{document}

\date{\today}

\begin{abstract}
Let $\ell,p \geq 5$ be primes such that $p \mid (\ell- 1)$. Let $\Delta>0$ be the fundamental discriminant of a real quadratic field in which $\ell$ splits. We denote by $h_\ell^-(\Delta)$ the order of the  minus part (for the Galois action) of the ray class group of $\Q(\sqrt{\Delta})$ of modulus $\ell$. In this paper, we study the indivisibility of $h_\ell^-(\Delta) $ by $p$, and prove that
$$\displaystyle{\#\{0<\Delta<X, \  \left(\dfrac{\Delta}{\ell}\right)=1 \text{ and } p\nmid h^-_\ell(\Delta)\}\gg \frac{\sqrt{X}}{\log(X)}} ,$$
under the assumption that this set is non-empty. This lower bound is made unconditional if $\ell=2p+1$, \ie if $p$ is a Sophie Germain prime. Our result can be viewed as being in the continuity of the results of Kohnen--Ono, Ono, Byeon, Beckwith etc. regarding the class numbers of quadratic fields, in the sense that we rely on techniques from the theory of half-integral weight modular forms. Significant difficulties however arise in our study, as we have to study Eisenstein congruences for cuspforms of weight $\frac{3}{2}$, and use a generalized Shimura correspondence of Baruch--Mao. Combined with the results of Lecouturier--Wang, our result has implications \eg for the $5$-part of BSD for even quadratic twists of $X_0(11)$.
\end{abstract}

\subjclass{11F30, 11F33, 11F67, 11F37, 11R29, 11R11}

\maketitle


\section*{Introduction}
The class group $\Cl_K$ of a number field $K$ is a finite abelian group that plays a central role in many questions in number theory, however its structure largely remains mysterious, except in a few very specific situations.
To improve its understanding, Cohen and Lenstra \cite{Cohen-Lenstra} proposed heuristics, particularly for families of quadratic fields.
Let $h(\Delta)$  denote the class number of the quadratic field $\Q(\sqrt{\Delta})$ with fundamental discriminant~$\Delta$.
Let $p>2$  be a prime number. Set 
$$\displaystyle{\eta^k(p):=\prod_{i\geq k} \left(1-\frac{1}{p^i}\right)} .$$
The Cohen–Lenstra  heuristics predict that the proportion of imaginary quadratic fields (respectively real quadratic fields) such that $p\nmid h(\Delta)$ is equal to $\eta^1(p)$ (resp. $\eta^2(p)$). A notable result regarding the indivisibility by $p$ of the class number of quadratic fields, though still far from approaching these heuristics, was obtained by Davenport and Heilbronn \cite{Davenport-Heilbronn} and $p=3$, as well as Hartung \cite{Hartung} for imaginary quadratic fields and $p>2$. These results have been extended to any prime $p$ by Horie \cite{Horie}, under the condition that $p$ does not split in the family of quadratic fields considered.

A breakthrough came with a paper of Kohnen and Ono \cite{Kohnen-Ono} in the late 1990s, where  they showed that for a prime $p>2$ and $X$ sufficiently large, 
$$\displaystyle{\#\{-X<\Delta<0 \text{ such that } p\nmid h(\Delta)\} \gg \frac{\sqrt{X}}{\log X}}.$$
The proof relies on the fact that, in this setting, the class number formula relates the order $h(\Delta)$ of the class group of
 $K=\Q(\sqrt{\Delta})$ to the special value $\zeta_K(s)$ of the Dedekind zeta function of $K$ at $s=0$. They then use a well-known result dating back from Gauss to relate $h(\Delta)$ to some Fourier coefficient of a modular form of weight~$3/2$. 
 
 Following the same principle, for real quadratic fields, Ono \cite{Ono} then Byeon \cite{Byeon} also obtained a result of indivisibility by making use of modular forms of weight $3/2$, and by using, in particular, Sturm's bound.
 More recently, Wiles \cite{Wiles} revisited the question of indivisibility by $p$ of the class number of imaginary quadratic fields satisfying a given set of local conditions. This result was further refined by Beckwith \cite{Beckwith}, who specified the number of such imaginary quadratic fields, again using modular forms.

\medskip

We now consider a situation in the spirit of these works.

Let $\ell,p \geq 5$ be  primes such that $p \mid (\ell-1)$. Let $\Delta>0$ be a fundamental discriminant such that  $\left(\frac{\Delta}{\ell}\right)=1$, equivalently $\ell$ splits in $K:=\Q(\sqrt{\Delta})$.
Let $\Cl_{K,\ell}$ denotes the ray class group of $K$ with modulus $\ell$.
The Galois group $\Gal(K/\Q)$ acts on $\Cl_{K,\ell}$, and we denote by $h^-_{\ell}(\Delta)$ the order of the eigenspace for the eigenvalue $-1$ of the non-trivial element of $\Gal(K/\Q)$. We prove the following.

\medskip

\begin{Theorem}\label{main_thm_paper}
Let  $\ell$, $p$ be primes $\geq 5$ such that $p \mid (\ell-1)$. Let $Q$ be a squarefree positive integer such that $\gcd(Q, 2p\ell)=1$ and $Q \not\equiv \pm 1 \text{ (modulo }p\text{)}$. Consider the set
$$\mathcal{D}(\ell, p, Q) := \{\Delta>0 \text{ fundamental discriminant s.t. } \left(\dfrac{\Delta}{Q}\right)=-1, \left(\dfrac{\Delta}{\ell}\right)=1 \text{ and }p\nmid h^-_\ell(\Delta)\} \text{.}$$
Assume that $\mathcal{D}(\ell, p, Q) \neq \emptyset$. Then we have
$$\displaystyle{\#\{0<\Delta<X \text{ with } \Delta \in \mathcal{D}(\ell, p, Q)\}\gg_{p,\ell, Q} \frac{\sqrt{X}}{\log(X)}} \text{.}$$
\end{Theorem}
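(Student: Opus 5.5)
We follow the Kohnen--Ono/Ono/Byeon strategy: realize $h^-_\ell(\Delta)$ modulo $p$ as a Fourier coefficient of a weight $3/2$ modular form, then use a Sturm-type argument to produce many $\Delta$ where the coefficient is nonzero mod $p$. Since $\ell$ splits in $K$, class field theory together with the analytic class number formula relates $h^-_\ell(\Delta)$ (up to $p$-adic units and the contribution of the unit group modulo $\ell$) to the special value $L(0,\chi_\Delta\cdot\omega)$ or, more precisely, to a twisted value $L(-1,\chi_\Delta \epsilon)$ for $\epsilon$ a character of conductor $\ell$ and order $p$. Here $\chi_\Delta$ is the quadratic character, and we work in the minus part for $\Gal(K/\Q)$. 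Concretely, the $p$-part of $h^-_\ell(\Delta)$ is controlled by the $\Delta$-twisted Bernoulli number $B_{1,\chi_\Delta\epsilon}$ modulo the prime above $p$, or equivalently by a congruence between $h^-_\ell(\Delta)$ mod $p$ and the $\Delta$-th coefficient of a Cohen--Eisenstein-type series. The first step is therefore to write down a weight $3/2$ form $g$ on $\Gamma_0(4\ell^2 N)$ whose $|\Delta|$-th coefficients are, mod $p$, a unit multiple of $h^-_\ell(\Delta)$. Since we need to isolate fundamental discriminants with $(\frac{\Delta}{\ell})=1$, we twist and project using characters mod $\ell$.

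The eigenform side is where Baruch--Mao enters. Since $p\mid \ell-1$, there is a weight $2$ cuspform $f$ of level $\ell$ (Mazur's Eisenstein ideal) congruent to the Eisenstein series mod a prime $\mathfrak p\mid p$. Its half-integral weight lift $g$ under the generalized Shimura/Waldspurger correspondence of Baruch--Mao has coefficients whose squares are proportional to $L(1,f\otimes\chi_\Delta)$. Via the Eisenstein congruence these are in turn congruent to products like $L(0,\chi_\Delta)L(0,\chi_\Delta\epsilon)$-type quantities, i.e. essentially $h^-_\ell(\Delta)$ up to units. So the second step is to prove an Eisenstein congruence at weight $3/2$: $g\equiv$ (explicit Eisenstein/theta combination) mod $\mathfrak p$, with $\Delta$-coefficients $\equiv c\cdot h^-_\ell(\Delta)$. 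I expect this step to be the main obstacle. It requires normalizing $g$ so that it is $\mathfrak p$-integral and nonzero mod $\mathfrak p$, and controlling the constant terms at all cusps.

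Given such a $g$ with $\bar g\neq 0$, we now deduce the count. Apply the twist by $(\frac{\cdot}{Q})$ condition via the operator that keeps coefficients $n$ with $(\frac{n}{Q})=-1$. The hypothesis $Q\not\equiv\pm1 \pmod p$ serves to prevent this twisted form from becoming a combination of theta series mod $p$ (which would be supported on $n=tm^2$): Hecke operators $T(q^2)$ for $q\mid Q$ act by eigenvalues $1+q$ or similar that are not compatible with such degeneracy. The nonemptiness assumption provides one $\Delta_0$ with nonzero coefficient, so the twisted projected form $\bar g_Q\neq0$ and is not supported on finitely many square classes. Then we run Bruinier/Ono's argument. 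For primes $q\nmid 2p\ell Q$ we use the action of $T(q^2)$ and Serre/Chebotarev to find a positive density set of primes $q$ with $g|T(q^2)\equiv \lambda g$ for a suitable eigenvalue. Combining this with the fact that $a(\Delta_0 q^2)\equiv -\left(\frac{-\Delta_0}{q}\right)a(\Delta_0)\not\equiv0$ and a sieve on squarefree parts, as in Ono and Byeon, gives $\gg\sqrt X/\log X$ fundamental discriminants $\Delta<X$ in the prescribed classes with $p\nmid h^-_\ell(\Delta)$. Concretely we take $\Delta=\Delta_0 q$ or use $\Delta q\,q'$-type constructions from Sturm's bound. The count $\sqrt X/\log X$ matches the count of primes $q\le \sqrt{X}$ appearing as $m^2$-type factors, i.e. the squarefree parts $\Delta$ arising from $n=\Delta m^2\le X$ with nonzero coefficient.
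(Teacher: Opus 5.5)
The overall architecture is right and matches the paper: Mazur's Eisenstein newform $F$ of level $\ell$, its Baruch--Mao lift, then an Ono--Byeon Sturm-bound argument. But the arithmetic core is wrong as you state it, and the step you flag as the main obstacle cannot work in the form you propose.

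There is no congruence $a_\Delta(g)\equiv c\cdot h^-_\ell(\Delta)\pmod{\mathfrak p}$ coming from Dirichlet $L$-values. For $\Delta>0$, both $\chi_\Delta$ and $\chi_\Delta\epsilon$ (with $\epsilon$ of odd order $p$) are even, so $L(0,\chi_\Delta)=0$ and $B_{1,\chi_\Delta\epsilon}=0$. The Euler factor $1-\chi_\Delta(\ell)$ of $E_{2,\ell}$ at the split prime $\ell$ also vanishes. Accordingly, the lift $F_\chi$ (with $\chi$ odd mod $\ell$, level $4\ell^2$, $a_1=1$) is congruent modulo $I_F$ to $\theta_\chi=\sum_n n\chi(n)q^{n^2}$. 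Every non-square coefficient is therefore $\equiv 0$ modulo $\mathfrak P_F$, and no mod-$\mathfrak P_F$ reduction can detect $h^-_\ell(\Delta)$.

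The missing idea is a first-order (``derivative'') congruence. Write $L_F(\Delta)^2=\sqrt\Delta\,L(F,\chi_\Delta,1)/L(F,1)$. Then $L_F(\Delta)$ lies in $I_F\mathcal O_{\mathfrak P_F}$, and its image in $I_F/I_F^2\twoheadrightarrow\Z/p\Z$ equals $\pm h(\Delta)\log_{\mathfrak l,p}(\epsilon_\Delta)$. Here the map to $\Z/p\Z$ is Mazur's $I/I^2\simeq(\Z/\ell\Z)^\times/\mu_{12}$ composed with $\log_{\ell,p}$. The paper obtains this from two ingredients:
\begin{itemize}
\item Popa's explicit Waldspurger formula, written as a sum of Heegner cycles $\{z_0,\Psi(\epsilon^+_\Delta)z_0\}$ over $\Cl^+(K)$;
\item Mazur's map $(1+c)\{z_0,\gamma z_0\}\mapsto d$ on $H_+/IH_+$.
\end{itemize}
It then uses the elementary equivalence $p\nmid h^-_\ell(\Delta)\iff h(\Delta)\log_{\mathfrak l,p}(\epsilon_\Delta)\neq 0$ (Proposition~\ref{prop_link_h_ell_unit}), which replaces your ``$h^-_\ell(\Delta)$ up to units''.

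The mod $p$ form is then the image in $I_F/I_F^2$ of the coefficients of $F_\chi$. To control all non-square $n=\Delta m^2$ (even $m$, and $n=\Delta/4$), you must handle the fact that the Baruch--Mao form is not a $T_{2^2}$-eigenform. The paper does this by writing $f_\chi=\mu_1f_{1,\chi}+\mu_2f_{2,\chi}$ with $\mu_1\equiv 1$ and $\mu_2\equiv 0\pmod{I_F}$. Constant terms at cusps are not the issue.

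The counting step is also not correctly set up.
\begin{itemize}
\item The form $g_{\chi,Q}$ is already a $T_{r^2}$-eigenform with eigenvalue $\chi(r)(r+1)$ for all $r\nmid 2Q\ell$, so Serre/Chebotarev is not needed.
\item Relations like $a(\Delta_0r^2)=\pm a(\Delta_0)$ only produce coefficients with the same squarefree part $\Delta_0$, and nothing justifies taking $\Delta=\Delta_0q$.
\item The hypothesis $Q\not\equiv\pm1\pmod p$ is not used through $T_{q^2}$ for $q\mid Q$. Since $Q$ itself solves the relevant congruences, Dirichlet's theorem gives a prime $r\not\equiv\pm 1\pmod p$ with $\bigl(\frac{n_i}{r}\bigr)=-1$ for all putative square classes $n_i$ of the support. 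The Bruinier--Ono identity $(r-1)T_{r^2}g=-\chi(r)(r^2-1)g$ then contradicts the eigenvalue $\chi(r)(r+1)$, so the support is not contained in finitely many square classes.
\end{itemize}
Taking $n_0$ in the support whose squarefree part has a prime factor $>C$, the paper works with primes $r$ in a positive-density set: $r>C$, all $n\le C$ are squares mod $r$, and $\bigl(\frac{n_0}{r}\bigr)=-1$. For such $r$ it compares $g|U_r$ with $r\chi(r)\,g|V_r$ via the Sturm bound, and refutes their equality at the coefficient $n_0r^3$. This yields $n_r\le Cr$ prime to $r$ with $a_{n_rr}(g)\neq 0$, hence a discriminant $\Delta_r\le Cr^2$ divisible by $r$. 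Counting primes $r\le\sqrt{X/C}$ then gives the bound $\sqrt X/\log X$.
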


\medskip

A similar result without the extra parameter $Q$ holds as an immediate corollary, as if $p\nmid h^-_\ell(\Delta)$ and $\left(\dfrac{\Delta}{\ell}\right)=1$, then one can always find a $Q$ satisfying the conditions of the theorem such that $\Delta \in \mathcal{D}(\ell, p, Q)$. For convenience, we record it in the following

\medskip

\begin{Corollary}\label{main_thm_cor}
Let  $p$, $\ell$ be primes $\geq 5$ such that $p \mid (\ell-1)$. Consider the set
$$\mathcal{D}(\ell,p) := \{\Delta>0 \text{ fundamental discriminant s.t. } \left(\dfrac{\Delta}{\ell}\right)=1 \text{ and }p\nmid h^-_\ell(\Delta)\} \text{.}$$
Assume that $\mathcal{D}(\ell,p) \neq \emptyset$. Then we have
$$\displaystyle{\#\{0<\Delta<X \text{ with } \Delta \in \mathcal{D}(\ell,p)\}\gg_{p,\ell} \frac{\sqrt{X}}{\log(X)}} \text{.}$$
\end{Corollary}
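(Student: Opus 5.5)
The statement to prove is Corollary~\ref{main_thm_cor}, and the plan is to deduce it directly from Theorem~\ref{main_thm_paper}. First we find one auxiliary modulus $Q$ adapted to a given element of $\mathcal{D}(\ell,p)$; then we use the inclusion $\mathcal{D}(\ell,p,Q)\subseteq \mathcal{D}(\ell,p)$.

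Since $\mathcal{D}(\ell,p)\neq\emptyset$, fix some $\Delta_0\in\mathcal{D}(\ell,p)$. We look for a prime $Q=q$ with four properties:
\begin{enumerate}
\item $q\nmid 2p\ell$;
\item $q\not\equiv \pm 1 \pmod p$;
\item $q\nmid \Delta_0$;
\item $\left(\frac{\Delta_0}{q}\right)=-1$.
\end{enumerate}
Such a $q$ is automatically squarefree and coprime to $2p\ell$.

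To produce $q$, we use Dirichlet's theorem on primes in arithmetic progressions for the modulus $N=\mathrm{lcm}(4\Delta_0,p)$. Since $\Delta_0$ is a fundamental discriminant with $\Delta_0>1$, the Kronecker symbol $\chi_{\Delta_0}=\left(\frac{\Delta_0}{\cdot}\right)$ is a nontrivial primitive character of conductor $\Delta_0$. Note that $p\nmid\Delta_0$ is not guaranteed, so we treat two cases.

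\emph{Case $p\nmid \Delta_0$.} Here $\chi_{\Delta_0}$ and the reduction modulo $p$ are independent conditions, by the Chinese remainder theorem on $(\Z/\Delta_0)^\times\times(\Z/p)^\times$ (with the $2$-part handled inside the $\Delta_0$ factor). We can therefore choose a residue class $a \bmod N$ satisfying two conditions:
\begin{itemize}
\item $\chi_{\Delta_0}(a)=-1$;
\item $a \bmod p\notin\{\pm1\}$, which is possible because $p\geq 5$.
\end{itemize}

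\emph{Case $p\mid\Delta_0$.} Write $\Delta_0=p^*D'$ with $p^*=\pm p$, and factor $\chi_{\Delta_0}=\chi_{p^*}\chi_{D'}$. Since $p\geq 5$, there is a class $b\bmod p$ with $b\not\equiv\pm1$ and $\left(\frac{b}{p}\right)=1$; for instance $b=4$. We fix $a\equiv b \pmod p$.

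\begin{itemize}
\item If $D'\neq 1$, then $\chi_{D'}$ is a nontrivial character of modulus prime to $p$. We choose the class of $a$ modulo $D'$ so that $\chi_{D'}(a)=-1$, and CRT combines this with $a\equiv b\pmod p$.
\item If $D'=1$, then $\Delta_0=p$ and $\chi_{\Delta_0}(a)=\left(\frac{a}{p}\right)$. In this case we instead take $b$ to be a non-residue modulo $p$ with $b\not\equiv\pm1$. Such a $b$ exists because $-1$ is a residue when $p\equiv1\pmod 4$, and because there are $(p-1)/2\geq 2$ non-residues when $p\equiv 3\pmod 4$ with $p\geq 7$.
\end{itemize}

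In all cases, Dirichlet's theorem gives infinitely many primes $q\equiv a\pmod N$. We discard the finitely many dividing $2p\ell\Delta_0$, and obtain a prime $q$ with all four properties. Then $\left(\frac{\Delta_0}{Q}\right)=-1$ for $Q=q$, so $\Delta_0\in\mathcal{D}(\ell,p,Q)$, and in particular $\mathcal{D}(\ell,p,Q)\neq\emptyset$.

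Applying Theorem~\ref{main_thm_paper} with this $Q$ gives
$$\#\{0<\Delta<X,\ \Delta\in\mathcal{D}(\ell,p,Q)\}\gg_{p,\ell,Q}\frac{\sqrt X}{\log X}.$$
Since $\mathcal{D}(\ell,p,Q)\subseteq\mathcal{D}(\ell,p)$ trivially, the same lower bound holds for $\mathcal{D}(\ell,p)$.

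Finally, $Q$ was chosen depending only on the fixed $\Delta_0$, which in turn depends only on $(\ell,p)$. Hence the implied constant depends only on $p$ and $\ell$. The only delicate point in the whole argument is the small case check for the residue condition modulo $p$ when $p\mid\Delta_0$, and the choices above handle it.
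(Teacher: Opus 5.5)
Your reduction is the same as the paper's. The paper only asserts that for $\Delta_0\in\mathcal{D}(\ell,p)$ one can always find an admissible $Q$ with $\Delta_0\in\mathcal{D}(\ell,p,Q)$, and then uses $\mathcal{D}(\ell,p,Q)\subseteq\mathcal{D}(\ell,p)$ together with Theorem~\ref{main_thm_paper}. Your case $p\nmid\Delta_0$ is correct. Your subcase $D'=1$ is also correct. In that subcase $\Delta_0>0$ forces $\Delta_0=p\equiv 1\pmod 4$, so your remark about $p\equiv 3\pmod 4$ is vacuous but harmless.

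However, the step you flagged as the delicate one fails for $p=5$. You claim there is a quadratic residue $b\not\equiv\pm1\pmod p$, ``for instance $b=4$''. But $4\equiv -1\pmod 5$, and the only squares in $(\Z/5\Z)^\times$ are $\pm1$. So when $p=5$, $5\mid\Delta_0$ and $D'\neq 1$, your construction cannot be carried out. This case lies within the hypotheses, and $p=5$ is the paper's own main example with $\ell=11$, so it has to be covered.

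The repair is one line. In the case $p\mid\Delta_0$ with $D'\neq1$, take any $b\not\equiv\pm1\pmod p$. Then choose the class of $a$ modulo $4|D'|$ so that $\chi_{D'}(a)=-\left(\frac{b}{p}\right)$. This is possible because $\chi_{D'}$ is a nontrivial character of modulus prime to $p$, so it takes both values $\pm1$. Then $\chi_{\Delta_0}(a)=\left(\frac{b}{p}\right)\chi_{D'}(a)=-1$. For $p=5$ this means taking $b\in\{2,3\}$ and $\chi_{D'}(a)=+1$. With this change, the Dirichlet step and the rest of your argument go through as written.
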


\medskip

\begin{remark}
It is not hard to see (\cf Proposition \ref{prop_link_h_ell_unit}) that the condition $\Delta \in \mathcal{D}(\ell,p)$ is equivalent to $p \nmid h(\Delta)$ and any fundamental unit of $\Q(\sqrt{\Delta})$ is not a $p$th power modulo a prime above $\ell$.
\end{remark}
We also get the following corollary in the case of Sophie Germain primes (choosing $Q=3$ in our theorem).

\medskip

\begin{Corollary}\label{main_cor}
Let $p \geq 5$ be a Sophie Germain prime, \ie such that $\ell := 2p+1$ is also prime. Then
$$\displaystyle{\#\{0<\Delta<X  \text{ fundamental discriminant s.t. } \left(\dfrac{\Delta}{3}\right)=-1 \text{,} \left(\dfrac{\Delta}{\ell}\right)=1 \text{ and }p\nmid h^-_\ell(\Delta)\}\gg_{p} \frac{\sqrt{X}}{\log(X)} } \text{.}$$
\end{Corollary}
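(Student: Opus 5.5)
The statement to prove is Corollary \ref{main_cor}. It should follow directly from Theorem \ref{main_thm_paper} with $\ell=2p+1$ and $Q=3$. The plan has three parts: check the hypotheses on $(\ell,p,Q)$, then produce one explicit $\Delta_0\in\mathcal{D}(2p+1,p,3)$ so the theorem becomes unconditional, and finally apply the theorem.

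\emph{Hypotheses on $(\ell,p,Q)$.} Since $p\geq 5$ is prime and $\ell=2p+1$, we have $p\mid \ell-1=2p$ and $\ell\geq 11$. The integer $Q=3$ is squarefree and coprime to $2p\ell$: indeed $p\neq 3$, and $3\nmid \ell$ because $p\equiv \pm1 \pmod 3$ forces $\ell\equiv 0$ or $2\pmod 3$, and $\ell$ is a prime greater than $3$. Finally $3\not\equiv\pm1\pmod p$ because $p\geq 5$.

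\emph{Non-emptiness of $\mathcal{D}(2p+1,p,3)$ (the main obstacle).} Here I would use the criterion of Proposition \ref{prop_link_h_ell_unit}: $\Delta\in\mathcal{D}(\ell,p)$ if and only if $p\nmid h(\Delta)$ and the fundamental unit $\varepsilon$ is not a $p$th power modulo a prime $\lambda\mid\ell$. The key point for Sophie Germain primes is that $(\mathcal{O}_K/\lambda)^\times\cong \mathbf{F}_\ell^\times$ is cyclic of order $2p$. So $\varepsilon$ is a $p$th power there exactly when $\varepsilon^2\equiv 1\pmod\lambda$, that is, when $\varepsilon\equiv\pm1\pmod\lambda$.

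I would try the family $\Delta=m^2+4$ with $m$ odd. Then $\varepsilon=(m+\sqrt{\Delta})/2$ is a unit of norm $-1$, and when $\Delta$ is a fundamental discriminant it is the fundamental unit; this is the standard fact for $m^2+4$. Its conjugate is $\bar\varepsilon=-\varepsilon^{-1}$. Suppose $\varepsilon\equiv\pm1\pmod\lambda$. Then also $\bar\varepsilon=-\varepsilon^{-1}\equiv\mp1\pmod\lambda$, so $m=\varepsilon+\bar\varepsilon\equiv 0\pmod\lambda$, hence $\ell\mid m$. Therefore, whenever $\ell\nmid m$, the unit condition holds automatically.

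The class-number condition also comes cheaply in this family. The field $\Q(\sqrt{5})$, with $m=1$, has class number $1$ and satisfies $\left(\frac{5}{3}\right)=-1$. The remaining requirement is $\left(\frac{5}{\ell}\right)=1$, which holds if and only if $\ell\equiv\pm1\pmod 5$. When that fails, I would take another small Richaud--Degert field with class number $1$, such as $\Delta=m^2+4\in\{13,29,53,173,293,\dots\}$ or $\Delta=m^2\pm 1$, $m^2\pm4$. The conditions needed are a class number prime to $p$, $\left(\frac{\Delta}{3}\right)=-1$ (which for $m^2+4$ with $3\nmid m$ means $\Delta\equiv 2\pmod 3$), $\left(\frac{\Delta}{\ell}\right)=1$, and $\ell\nmid m$.

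The delicate step is guaranteeing a suitable $\Delta_0$ uniformly in $p$. My first attempt is the following. Choose $m$ with $3\nmid m$, $\ell\nmid m$, and $m^2+4$ a nonzero square modulo $\ell$; a positive proportion of residues $m$ modulo $\ell$ satisfy this. Then the class number of $\Q(\sqrt{m^2+4})$ needs to be prime to $p$. Rather than controlling the class number, I could instead use the norm $-1$ unit structure directly, or note that only non-emptiness is needed. Alternatively, I would handle the case of a class number divisible by $p$ by appealing to whatever explicit example the paper constructs later.

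\emph{Conclusion.} Once some $\Delta_0$ is exhibited, Theorem \ref{main_thm_paper} applies with $Q=3$ and gives the stated lower bound, with an implied constant depending only on $p$, since $\ell$ and $Q$ are determined by $p$.
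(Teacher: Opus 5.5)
There is a genuine gap. Your reduction to Theorem \ref{main_thm_paper} with $Q=3$ is fine, and so is your check of the hypotheses on $(\ell,p,3)$. The unit argument also works: $\mathbf{F}_\ell^\times$ has order $2p$, so a $p$th power is $\pm1$, and for $\varepsilon=(m+\sqrt{m^2+4})/2$ of norm $-1$ this forces $\ell\mid m$. You do not even need $\varepsilon$ to be fundamental, since it is a positive power of $\epsilon_\Delta$; this parallels Lemma \ref{lem_a_b_primes}.

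What you never prove is $p\nmid h(\Delta_0)$, and that is the real content of the non-emptiness claim. None of your fallbacks supplies it.
\begin{itemize}
\item There are only finitely many Richaud--Degert fields of class number one, and any fixed finite list imposes only finitely many conditions $\left(\frac{\Delta}{\ell}\right)=1$. A given $\ell$ can fail all of them: for $5,29,53,173,293$ it suffices that $\ell$ be a non-residue modulo each. The field $\Q(\sqrt{13})$ is useless anyway, since $3$ splits there.
\item The norm $-1$ unit says nothing about $h(\Delta)$.
\item Appealing to the paper's later example is circular, because that example (Theorem \ref{thm_SophieGermain}) is exactly what has to be proved.
\end{itemize}
The paper obtains $h(\Delta_0)<p$ by taking $D=(\ell+a)^2-1$ with $a$ fixed by congruences modulo $\ell$, $2$ and $3$. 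Then $|\ell+a|$ can be nearly $3\ell$, which forces the paper to use the refined bounds of Proposition \ref{thm_bound_classgp} (with $3$ inert) together with a computer check for $p<4.13\cdot 10^9$.

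Within your own family the gap closes much more cheaply, provided you control the size of $m$ and not only its residue class. Since $m^2+4$ is the discriminant of $\Z[\varepsilon]$, you can write $m^2+4=\Delta t^2$ with $t\in\Z$. If $1\le m\le \ell-2$, this gives $\Delta\le m^2+4<(\ell-1)^2=4p^2$, so (\ref{eq_naive_bound}) yields $h(\Delta)\le\sqrt{\Delta}/2<p$.

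An explicit choice is $m=p-1$:
\begin{itemize}
\item $2m=\ell-3\equiv-3\pmod\ell$, so $4(m^2+4)\equiv 25\pmod\ell$ is a nonzero square, as $\ell\neq5$.
\item $3\nmid\ell$ forces $p\equiv 2\pmod 3$, hence $3\nmid m$ and $m^2+4\equiv 2\pmod 3$.
\item $t$ is prime to $3\ell$.
\end{itemize}
Hence $\left(\frac{\Delta}{\ell}\right)=1$, $\left(\frac{\Delta}{3}\right)=-1$, $\ell\nmid m$ and $p\nmid h(\Delta)$, so $\Delta\in\mathcal{D}(\ell,p,3)$ by Proposition \ref{prop_link_h_ell_unit}. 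For $p=5$ this gives $\Q(\sqrt5)$, and for $p=11$ it gives $\Q(\sqrt{26})$.

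With this step added your argument is complete. It also avoids both the analytic class-number bounds and the numerical verification in the paper's proof of Theorem \ref{thm_SophieGermain}.
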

This corollary is obtained by constructing explicitly some $\Delta_0$ in $\mathcal{D}(p, \ell, 3)$ (\cf Theorem \ref{thm_SophieGermain} below). 

\medskip

As Cohen and Lenstra explain in \cite[\S 8]{Cohen-Lenstra}, while the heuristic for imaginary quadratic fields is quite natural, the one for real quadratic fields is more subtle. An analogous situation given at the end of \S 8 of \cite{Cohen-Lenstra}, provided by Gross, supports this principle. We believe our context can also be seen as an example that supports this view. We expect that the equality
$$\lim_{X\rightarrow +\infty}\frac{\displaystyle{\#\{0<\Delta<X \text{ with } \Delta \in \mathcal{D}(\ell,p)\}}}{\displaystyle{\#\{0<\Delta<X \text{ with }\left(\dfrac{\Delta}{\ell}\right)=1 \}}} \stackrel{?}{=} \eta^1(p) $$
holds. In other words, among the fundamental discriminants 
$\Delta>0$ satisfying $\left(\frac{\Delta}{\ell}\right)=1$, the density of those for which $p\nmid h^-_\ell(\Delta)$ holds should be equal to  $\eta^1(p)$. In particular, this conjectural density is independent of $\ell$. We give some numerical data supporting this conjecture in \S \ref{section_heuristic}. Recently, Bartel and Pagano \cite{Bartel-Pagano} addressed the Cohen--Lenstra conjectures for the ray class groups of quadratic fields for a fixed rational modulus. It would be interesting to revisit their results within the framework of our study.

\medskip

Finally, using the results of \cite{Lecouturier_Wang}, we can deduce from a slight modification of Theorem \ref{main_thm_paper} some consequences for the $p$-part of the BSD conjecture for the $p$-Eisenstein quotient of $J_0(\ell)$, assuming that the pair $(\ell, p)$ (with $N=\ell$ in the notation of \cite{Lecouturier_Wang}) satisfies \cite[Hypothesis H]{Lecouturier_Wang}. This Hypothesis H is satisfied in particular for $\ell=11$ and $p=5$, and in this case $J_0(\ell)$ is the elliptic curve $E: y^2+y=x^3-x^2-10x-20$ over $\Q$. If $\Delta$ is the discriminant of a quadratic field, we denote by $E^{(\Delta)}$ the corresponding quadratic twist over $\Q$. The result we get is the following.

\medskip

\begin{Corollary}\label{cor_BSD}
Consider the elliptic curve $E = X_0(11): y^2+y=x^3-x^2-10x-20$ over $\Q$. Then
\begin{align*}
  \#\{\,
  & 0<\Delta<X  \text{ such that } \left(\dfrac{\Delta}{5}\right)=-1\text{, } \left(\dfrac{\Delta}{11}\right)=1 \text{, } \\
  & \Sh(E^{(\Delta)}/\Q)[5]=0 \text{, } E^{(\Delta)}(\Q) \otimes_{\Z} \Z/5\Z=0 \text{ and the }5\text{-part of BSD holds for }E^{(\Delta)}/\Q \} \gg \frac{\sqrt{X}}{\log(X)} \text{.}
\end{align*}
\end{Corollary}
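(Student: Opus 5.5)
The plan is to combine a variant of Theorem \ref{main_thm_paper} (for $\ell=11$, $p=5$) with the arithmetic results of \cite{Lecouturier_Wang}. For the pair $(N,p)=(11,5)$, Hypothesis H of \cite{Lecouturier_Wang} holds. Here the $5$-Eisenstein quotient of $J_0(11)$ is $E$ itself. The relevant result of \cite{Lecouturier_Wang} should read roughly as follows: if $\Delta>0$ is a fundamental discriminant with $\left(\frac{\Delta}{11}\right)=1$ and suitable local conditions at $p=5$, then $p\nmid h^-_\ell(\Delta)$ forces three conclusions. First, the algebraic part of $L(E^{(\Delta)},1)$ is a $5$-adic unit. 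Second, $\Sel_5(E^{(\Delta)}/\Q)=0$, hence $\Sh(E^{(\Delta)}/\Q)[5]=0$ and $E^{(\Delta)}(\Q)\otimes\Z/5\Z=0$. Third, the $5$-part of BSD holds for $E^{(\Delta)}$, the Tamagawa and torsion contributions being checked to match.

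I expect the condition $\left(\frac{\Delta}{5}\right)=-1$ to be exactly the local condition at $p$ needed in \cite{Lecouturier_Wang}, for instance to control the behaviour of $E^{(\Delta)}$ at $5$ or to make the relevant Tamagawa factors prime to $5$. So the first step is to state precisely which implication of \cite{Lecouturier_Wang} is used, and to check that it applies to every $\Delta$ in
$$\{\Delta>0:\ \left(\tfrac{\Delta}{5}\right)=-1,\ \left(\tfrac{\Delta}{11}\right)=1,\ 5\nmid h^-_{11}(\Delta)\}.$$
With this in hand, the corollary reduces to a lower bound $\gg \sqrt X/\log X$ for the size of this set.

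The second step is the ``slight modification'' of Theorem \ref{main_thm_paper}. Formally the theorem needs $Q=5$, which is excluded since $\gcd(Q,2p\ell)=1$ fails. I would rerun the proof of Theorem \ref{main_thm_paper} with the auxiliary twisting condition at $Q$ replaced by the condition $\left(\frac{\Delta}{p}\right)=-1$. Concretely, the half-integral weight form of weight $\frac32$, whose coefficients encode $h^-_\ell(\Delta)$ modulo $p$, would be twisted or sieved by the quadratic character modulo $p$ instead of modulo $Q$. The role of the hypothesis $Q\not\equiv\pm1 \pmod p$ must then be revisited. In the original argument it presumably guarantees, through Hecke operators at $Q$, that the mod $p$ Eisenstein part is killed or that a non-Eisenstein eigenform survives. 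I expect this to be the main obstacle: at $p$ itself one cannot use $T_p$ in the same naive way. I would instead use the $U_p$/level-raising structure, or carry out a direct check of the Eisenstein congruence, to show that the relevant cusp form modulo $p$ still has a nonzero coefficient in the prescribed residue class. If that fails, the fallback is to impose both $\left(\frac{\Delta}{5}\right)=-1$ and $\left(\frac{\Delta}{Q}\right)=-1$ for a small auxiliary prime, say $Q=3$. Since $3\not\equiv\pm1\pmod 5$, the unmodified argument then applies, and it only has to be checked that the extra condition at $5$ is compatible with the half-integral weight sieve, which is a congruence condition on $\Delta$.

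The third step is to show that this set is non-empty, which the theorem takes as a hypothesis. I would exhibit an explicit $\Delta_0$, in the spirit of Theorem \ref{thm_SophieGermain}, noting that $11=2\cdot5+1$ so $5$ is a Sophie Germain prime. The candidate $\Delta_0$ must satisfy $\left(\frac{\Delta_0}{5}\right)=-1$, $\left(\frac{\Delta_0}{11}\right)=1$ and, if the fallback is used, $\left(\frac{\Delta_0}{3}\right)=-1$. For it, I would verify by Proposition \ref{prop_link_h_ell_unit} that $5\nmid h(\Delta_0)$ and that the fundamental unit is not a $5$th power modulo a prime above $11$. This is a finite computation. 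The lower bound from the modified theorem, combined with the implication from the first step, then gives the corollary.
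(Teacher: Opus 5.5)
Your outer structure matches the paper. Corollary~1.8 of \cite{Lecouturier_Wang} (which needs $5\nmid\Delta$ and $\left(\frac{\Delta}{11}\right)=1$) converts $5\nmid h^-_{11}(\Delta)$ into $\Sel_5(E^{(\Delta)}/\Q)=0$ together with the $5$-part of BSD, and a variant of Theorem~\ref{main_thm_paper} then imposes $\left(\frac{\Delta}{5}\right)=-1$. Your instinct to sieve by the quadratic character modulo $5$ is also what the paper does. But the step that needs an idea is left open, and you place the obstacle in the wrong spot. The argument never uses a Hecke operator at $Q$ or at $p$, and $U_p$ or level raising plays no role. The hypothesis $Q\not\equiv\pm1\pmod p$ enters only in Lemma~\ref{lem_no_finite_support}, where it produces an auxiliary prime $r\not\equiv\pm1\pmod p$ for the Bruinier--Ono argument.

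The real problem is that $g_{\chi,Q}$ is a mod $p$ form obtained through the map (\ref{eq_I_F/I_F^2}), and such forms are only defined when the level is invertible in $R/pR$. So sieving by a character of conductor $5$ is not available there. Your fallback, which treats the extra condition at $5$ as ``just a congruence condition'', assumes exactly what has to be shown.

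The paper's fix uses the specific pair $(\ell,p)=(11,5)$:
\begin{itemize}
\item Since $11\equiv 3\pmod 4$, one can take $\chi$ quadratic. $F$ is the newform of $X_0(11)$, so $I_F=5\Z$, and $F_\chi$ has coefficients in $\Z$ with $F_\chi\equiv\theta_\chi\pmod 5$.
\item Hence $\tilde g_\chi=(F_\chi-\theta_\chi)/5$ is an honest complex cusp form with integral coefficients. It can be sieved to $\left(\frac{n}{5}\right)=-1$ over $\C$, where level $4\cdot 11^2\cdot 5^2$ is harmless.
\item The theta part disappears under the sieve, because squares never satisfy $\left(\frac{n}{5}\right)=-1$. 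The congruences $T_{r^2}\equiv\chi(r)(r+1)\pmod 5$ survive for primes $r\nmid 2\cdot 5\cdot 11$.
\item Sturm's bound for integral forms does not care that $5$ divides the level, so Lemma~\ref{key_lem_main_thm} goes through unchanged.
\end{itemize}

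Second, a nonemptiness witness alone is not enough. Once $5$ divides the level, Lemma~\ref{lem_no_finite_support} (via Bruinier--Ono) is no longer available to supply an $n_0$ in the support whose squarefree part has a prime factor larger than $C$. That property is exactly what gives the set $S$ of auxiliary primes in Lemma~\ref{key_lem_main_thm} positive density. The paper avoids Bruinier--Ono altogether by choosing the witness with this property built in. Its witness $\Delta_0=8237$ is prime and exceeds $C=2\cdot 11\cdot 12\cdot 5\cdot 6=7920$. It also satisfies $\left(\frac{\Delta_0}{5}\right)=-1$, $\left(\frac{\Delta_0}{11}\right)=1$ and $5\nmid h^-_{11}(\Delta_0)$. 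Your Step~3 must require such a large prime factor, or else you must separately justify the Bruinier--Ono step for the $5$-sieved form. In particular, a small witness of the kind found in the proof of Theorem~\ref{thm_SophieGermain} would not do.
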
 

\

In line with the Cohen--Lenstra type heuristic above for the divisibility of $h_{\ell}^-(\Delta)$, and the fact that $100\%$ of $0<\Delta<X$ with $\left(\dfrac{\Delta}{11}\right)=1$ are expected to satisfy $E^{(\Delta)}(\Q) \otimes_{\Z} \Z/5\Z=0$, we are led to expect that 
$$\lim_{X\rightarrow +\infty}\frac{\displaystyle{\#\{0<\Delta<X \text{ with } \left(\dfrac{\Delta}{5}\right)=-1  \text{, } \left(\dfrac{\Delta}{11}\right)=1 \text{ and } \\
  \Sh(E^{(\Delta)}/\Q)[5]=0 \}}}{\displaystyle{\#\{0<\Delta<X \text{ with }\left(\dfrac{\Delta}{5}\right)=-1 \text{ and }\left(\dfrac{\Delta}{11}\right)=1  \}}} \stackrel{?}{=} \eta^1(5)\approx 0.7680 \text{.}$$

\

\

Let us outline the general principle of the proof of the Theorem \ref{main_thm_paper}. There are three main steps. \\

The first step is to relate the indivisibility of $h_\ell^-(\Delta)$ by $p$ to the indivisibility of suitably normalized central critical $L$-value $L(F, \chi_{\Delta}, 1)$, where $F \in S_2(\Gamma_0(\ell))$ is a Hecke newform which is congruent to an Eisenstein series modulo $p$. Such a newform is known to exist by the Eisenstein ideal theory due to Mazur \cite{Mazur_Eisenstein}, but it may not be unique (up to conjugation) unless $p^2 \nmid (\ell-1)$. The congruence for the $L$-value has been proved in an earlier work of the first author (\cf \cite[(26)]{Lecouturier_HV}), although in a form which is not quite suitable for our present purpose, as it makes use of a period and requires $p^2 \nmid (\ell-1)$. In this paper, follow the method of \cite{Lecouturier_HV}, namely we use an explicit form of a formula of Waldspurger, due to Popa \cite{Popa_L}, for $L(F/\Q(\sqrt{\Delta}),1)$ as well as congruences for modular symbols (essentially due to Mazur). The final congruence is stated in Proposition \ref{prop_main_congruence_L}. Let us also notice that a similar congruence formula has been obtained in a joint work of the first author \cite{Lecouturier_Wang} using Sharifi's conjecture and algebraic $K$-theory of cyclotomic fields. 

The second step, which perhaps is the most original part of this paper, is to transfer the congruence for the $L$-value from the first step to a congruence satisfied by a certain modular form of weight $\frac{3}{2}$ and level $\Gamma_1(4\ell^2)$. In some sense, our result is an Eisenstein and higher Eisenstein congruence at the level of modular forms of weight $\frac{3}{2}$. We rely crucially on a generalization of the Shimura correspondence due to Baruch--Mao in \cite{Mao}. The main results are Theorem \ref{prop_main_wt_3/2} and Corollary \ref{cor_main_wt_3/2}. The main technical difficulty is to take care of the Hecke operator at the place $2$, which requires going through the details of the construction of \cite{Mao}.

The third step is to use the Sturm bound to produce enough non-zero coefficients of our weight $\frac{3}{2}$ modular form modulo $p$. Our technique is similar to the one of Ono \cite{Ono} and Byeon \cite{Byeon}. Note that we need to assume that at least one coefficient is non-zero modulo $p$, which is the reason for the assumption $\mathcal{D}(\ell, p, Q) \neq \emptyset$ in Theorem \ref{main_thm_paper}.

\

\section*{Notation}
In this paper, $\ell$ and $p$ will denote two primes such that $p\mid (\ell-1)$. Since the group $(\Z/\ell\Z)^{\times}$ is cyclic of order $\ell-1$, we can fix a surjective group homomorphism
$$\log_{\ell, p} : (\Z/\ell\Z)^{\times} \rightarrow \Z/p\Z \text{.}$$

The letter $\Delta$ will be used to denote the discriminant of a real quadratic field, viewed inside $\R$. We let $h(\Delta)$ and $\epsilon_{\Delta}$ be the class number and the fundamental unit $>1$ of $\Q(\sqrt{\Delta})$ respectively. We denote by 
$$\chi_{\Delta} : (\Z/\Delta\Z)^{\times} \rightarrow \C^{\times}$$
the (primitive) quadratic character associated with $\Q(\sqrt{\Delta})$. 
Finally, we denote by $\mathcal{D}_{\ell}$ the set of discriminants of real quadratic fields in which the prime $\ell$ splits, \ie
\begin{equation}\label{eq_def_D_ell}
\mathcal{D}_{\ell} = \{\Delta>0 \text{ fundamental discriminant such that } \chi_{\Delta}(\ell)=1\} \text{.}
\end{equation}
Let $\Delta \in \mathcal{D}_{\ell}$.
If $\varepsilon$ is a unit of $\Q(\sqrt{\Delta})$ and $\l$ is a prime above $\ell$ in $K$, we define 
$$\log_{\l,p}  (\varepsilon):=\log_{\ell,p}  (\varepsilon \ {\rm modulo} \ \l)\in \Z/p\Z .$$ 
We observe that  the condition $\log_{\l,p}(\varepsilon)=0$ does not depend on the choice of $\l$ and of the discrete logarithm $\log_{\ell,p}$.

\

\section*{Acknowledgements and funding}

This project started in the summer 2023 when the second author visited the first author at the YMSC of Tsinghua University. The main research was carried out during subsequent visits of the second author to Westlake University (springs 2025 and 2026) and of the first author to University Marie and Louis Pasteur in summer 2025. The authors thank these institutions for their warm hospitality. The authors would also like to thank Farshid Hajir for discussions regarding the Cohen--Lenstra heuristics.

The first author was partially supported by a NSFC grant (grant number W2532004).
The second author was partially supported by the EIPHI Graduate School (contract “ANR-17-EURE-0002") and by the Bourgogne-Franche-Comté Region.

\

\section{Arithmetic background}\label{section1}

\subsection{Ray class group of real quadratic fields}

\subsubsection{Cyclic degree $p$ extensions tamely ramified}
 Let $K$ be a number field and $p\geq 2$ be a prime. 
A cyclic degree $p$ extension $L/K$ will be called a $\Z/p$-extension.
A prime $\l$ of $K$ is said to be \emph{tame} if its  absolute norm $N\l$ is congruent to $1$ modulo $p$. This congruence is necessary to have (possibly) some ramification at $\l$ in some $\Z/p$-extension of $K$.
In the following we are interested in tamely ramified $\Z/p$-extension $L/K$. As we will see, these extensions are managed by a certain governing field $M$.
Let 
$$V_K:=\{x\in K^\times \text{ such that } \forall \mathfrak{q} \text{ prime of }K \text{, }v_\q(x)\equiv 0 \ ({\rm modulo} \ p)\}$$ 
be the Selmer group of $K$. Here $v_\q$ is the normalized $\q$-valuation. It is well-known that  $V_K$ fits in the following exact sequence 
$$1 \longrightarrow  E_K/E_K^p \longrightarrow V_K/(K^\times)^p \longrightarrow \Cl_K[p] \longrightarrow 1,$$
where $E_K$ denotes the group of the units of $K$.
Set  $K':=K(\zeta_p)$, $M:=K'(\sqrt[p]{V_K})$ and $G:=\Gal(M/K')$.

Observe that a tame prime $\l$ of $K$ splits in $K'/K$, and is unramified in $M/K'$. Choose a prime $\Ll$ of $K'$ above $\l$, and denote by $\sigma_\Ll\in G$ its  Frobenius element in $M/K$. Observe also that if we take another prime $\Ll'$ over $\l$, then there exists $\lambda \in \Z/p\Z$ such that $\lambda \neq 0$ and $\sigma_\Ll=\lambda \sigma_{\Ll'}$ (we use additive notation).
From now on, for each tame prime $\l$ of $K$, we fix a Frobenius $\sigma_\Ll \in G$, and we note it $\sigma_\l$.
Let us recall the following result of Gras--Munnier \cite{GM} (see also \cite{HMR_CMJ}).

\begin{thm} \label{theo_GM}
There exist a $\Z/p$-extension $L/K$, exactly and totally ramified at tame primes $\{\l_1,...,\l_m\}$, if and only if there exist $a_1, ..., a_m \in (\Z/p\Z)^\times$ such that $$\sum_{i=0}^m a_i \sigma_{\l_i}=0 \in G\text{.}$$
\end{thm}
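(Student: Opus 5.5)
The plan is to use class field theory together with Kummer theory over $K'=K(\zeta_p)$. Two facts set this up. First, since $p\nmid [K':K]$, a $\Z/p$-extension $L/K$ corresponds to a $\Z/p$-extension $LK'/K'$. Second, $LK'=K'(\sqrt[p]{\alpha})$ for some $\alpha\in K'^\times$ lying in the eigenspace of $(K'^\times/K'^{\times p})$ on which $\Gal(K'/K)$ acts through the cyclotomic character $\omega$. Ramification of $L/K$ at a tame $\l$ is then read off from $v_{\Ll}(\alpha)\bmod p$ at the primes $\Ll\mid\l$ of $K'$; by the eigenspace condition these valuations are determined by the one at a single fixed $\Ll$. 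I will take the converse route via class field theory. A $\Z/p$-extension of $K$ unramified outside $S=\{\l_1,\dots,\l_m\}$ is a quotient of the ray class group $\Cl_{K,\mathfrak m}$ with $\mathfrak m=\prod\l_i$. We have the exact sequence
$$E_K\longrightarrow \prod_i (\mathcal O_K/\l_i)^\times\otimes\Z/p \longrightarrow \Cl_{K,\mathfrak m}/p \longrightarrow \Cl_K/p\longrightarrow 0.$$
The inertia group at $\l_i$ in the maximal elementary $p$-quotient is the image of the $i$-th factor $(\mathcal O_K/\l_i)^\times\otimes\Z/p\simeq\Z/p$.

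The key steps, in order, are as follows.

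(1) \textbf{Dual formulation.} A character $\psi:\Cl_{K,\mathfrak m}\to\Z/p$ is ramified exactly at all $\l_i$ iff each restriction $\psi_i$ to the $i$-th local factor is nonzero. Such $\psi$ exist iff there is a nonzero-at-every-coordinate linear form $(\psi_i)$ on $\prod_i(\mathcal O_K/\l_i)^\times/p$ that extends to $\Cl_{K,\mathfrak m}/p$. An extension of a given $(\psi_i)$ exists iff $(\psi_i)$ kills the image of $E_K$ and is compatible with the obstruction from $\Cl_K[p]$. Concretely, $(\psi_i)$ must vanish on the images of all $x\in V_K$ whose ideal is a $p$-th power $\mathfrak a^p$ with $\mathfrak a$ prime to $\mathfrak m$. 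A cyclic degree-$p$ character is automatically totally ramified at each ramified tame prime, which gives the ``totally'' clause for free.

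(2) \textbf{Local--global dictionary.} Fix $\log$ maps $(\mathcal O_K/\l_i)^\times/p\simeq\Z/p$. By Kummer theory/Artin reciprocity, for $x\in V_K$ the local residue $\log_{\l_i}(x)$ equals, up to a nonzero scalar depending only on the choice of $\Ll_i$, the Kummer pairing $\langle \sigma_{\l_i},x\rangle$ of the Frobenius $\sigma_{\l_i}\in G=\Gal(M/K')$ with $\sqrt[p]{x}$. I expect this to be the main obstacle: keeping track of the $\omega$-twists and the normalisations of Frobenius so that the pairing $G\times V_K/K^{\times p}\to\mu_p$ is perfect. Perfectness holds because $V_K/K^{\times p}$ injects into $K'^\times/K'^{\times p}$, using $p\nmid[K':K]$.

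(3) \textbf{Conclusion by linear algebra.} The conditions of step (1) become: there exist $a_i\in(\Z/p)^\times$ (namely $a_i=\psi_i$ under the identifications) with $\sum_i a_i\langle\sigma_{\l_i},x\rangle=0$ for all $x\in V_K$. By perfectness of the Kummer pairing this is exactly $\sum_i a_i\sigma_{\l_i}=0$ in $G$. Changing $\Ll_i$ rescales $\sigma_{\l_i}$ by a unit, which only changes the $a_i$ by units, so the criterion is independent of the choices. Both directions follow, since every step above is an equivalence.

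For step (2) I would first try the standard reflection argument from Gras's book, which computes the cokernel of the global-to-local map via the Kummer dual. This is what Gras--Munnier do, so in the paper it is reasonable to cite \cite{GM} for this identification once the reduction in step (1) is spelled out.
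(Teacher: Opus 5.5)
\textbf{Verdict.} The paper gives no proof of this statement; it quotes it from Gras--Munnier \cite{GM} (see also \cite{HMR_CMJ}). Your outline is a correct, self-contained proof along the standard lines. You take characters of the ray class group modulo $\mathfrak m=\prod_i\l_i$, whose inertia at $\l_i$ is the image of $(\mathcal O_K/\l_i)^\times$. You then translate the resulting linear conditions on $V_K$ into a Frobenius relation via Kummer theory. So you supply an actual argument where the paper has only a reference. A few points should be tightened.

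\textbf{The exact sequence in step (1).} The displayed sequence is not exact at $\prod_i(\mathcal O_K/\l_i)^\times\otimes\Z/p$ when $E_K$ is on the left. The kernel of the map to $\Cl_{K,\mathfrak m}/p$ is the image of $V_K$. For instance, if $(x)=\mathfrak a^p$ with $\mathfrak a$ prime to $\mathfrak m$ and of order $p$ in $\Cl_K$, then $\bar x$ dies in $\Cl_{K,\mathfrak m}/p$ but need not lie in the image of $E_K$. You state the correct condition in the next sentence, so simply put $V_K$ in place of $E_K$. Moreover, $v_{\l_i}(x)\equiv 0 \pmod p$ for $x\in V_K$, so the residue map $V_K\to(\mathcal O_K/\l_i)^\times/p$ is defined on all of $V_K$: multiply $x$ by a $p$-th power to make it prime to $\l_i$. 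The condition is therefore just that $x\mapsto\sum_i a_i\log_{\l_i}(x)$ vanishes on $V_K$.

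\textbf{Step (2) is elementary.} It is not an obstacle and needs no reflection theorem. A tame $\l$ splits completely in $K'/K$, so $\mathcal O_{K'}/\Ll=\mathcal O_K/\l$. For $x\in V_K$ prime to $\l$ one has $\sigma_{\Ll}(\sqrt[p]{x})/\sqrt[p]{x}\equiv x^{(N\l-1)/p}\pmod{\Ll}$. Hence $\langle\sigma_\Ll,x\rangle$ equals $\log_{\l}(x)$ times a unit that depends only on how $\mu_p\subset K'$ reduces modulo $\Ll$. This is exactly the rescaling $\sigma_\Ll=\lambda\sigma_{\Ll'}$ noted in the paper. The $\omega$-eigenspace discussion in your first paragraph plays no role in the argument you actually run.

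\textbf{Step (3) uses less than you claim.} You only need non-degeneracy on the $G$ side. This is immediate from $M=K'(\sqrt[p]{V_K})$: an element of $G$ fixing every $\sqrt[p]{x}$ with $x\in V_K$ is trivial. Injectivity of $V_K/K^{\times p}\to K'^{\times}/K'^{\times p}$ is true but not needed.

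\textbf{Minor points.} The sum in the statement should start at $i=1$. For $p=2$, your modulus without real places produces extensions unramified at infinity, which is the version that matches $V_K$ defined without sign conditions. This is irrelevant for the paper's $p\geq 5$.
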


As easy consequence we get

\begin{cor}\label{coro_GM} Let $\l$ be a tame prime of $K$.
    There exists no $\Z/p$-extension $L/K$ unramified outside $\l$ if and only if $p\nmid \# \Cl_K$ and $\sigma_{\l} \neq 0 \in \Gal(M/K')$. 
\end{cor}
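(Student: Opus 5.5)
We derive Corollary \ref{coro_GM} from Theorem \ref{theo_GM} applied with $m=1$, together with class field theory for the unramified part. A $\Z/p$-extension $L/K$ unramified outside $\l$ is either unramified everywhere or ramified exactly at $\l$. Since $\l$ is tame, ramification at $\l$ is tame, so the inertia group has order dividing $p$ and equals $\Gal(L/K)$ whenever $\l$ ramifies. Hence ramification at $\l$ is automatically total. (We work with finite primes; the infinite places should be handled by whatever convention the paper uses for $V_K$ and ray classes. For $p$ odd, which is the case of interest, real places cannot ramify in a degree $p$ extension.) So the non-existence of such an $L$ amounts to two conditions: there is no unramified $\Z/p$-extension of $K$, and there is no $\Z/p$-extension exactly and totally ramified at $\{\l\}$.

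For the first condition, class field theory identifies unramified $\Z/p$-extensions of $K$ with surjections $\Cl_K\to\Z/p\Z$. Such a surjection exists if and only if $p\mid\#\Cl_K$.

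For the second condition, Theorem \ref{theo_GM} with $m=1$ says that a $\Z/p$-extension exactly and totally ramified at $\l$ exists if and only if there is $a_1\in(\Z/p\Z)^\times$ with $a_1\sigma_\l=0$ in $G$. Since $G=\Gal(M/K')$ is an elementary abelian $p$-group, hence an $\mathbf{F}_p$-vector space, and $a_1$ is invertible, this holds if and only if $\sigma_\l=0$. This condition is independent of the chosen prime $\Ll$ above $\l$, because changing $\Ll$ multiplies $\sigma_\l$ by a nonzero scalar.

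Combining the two: there is no $\Z/p$-extension unramified outside $\l$ if and only if $p\nmid\#\Cl_K$ and $\sigma_\l\neq0$.

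The argument itself is straightforward. The only point needing care is the matching of conventions: that "unramified outside $\l$" allows the unramified case, which is exactly where the hypothesis $p\nmid\#\Cl_K$ enters, and that tameness forces total ramification at $\l$ as soon as $\l$ ramifies.
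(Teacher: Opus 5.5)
Your proof is correct and is exactly the deduction the paper has in mind when it calls the corollary an easy consequence of Theorem \ref{theo_GM}. You split into the unramified case, where class field theory gives $p\mid\#\Cl_K$, and the case of ramification exactly at $\l$, where Theorem \ref{theo_GM} with $m=1$ gives $\sigma_\l=0$. One small correction: ramification at $\l$ is total because $\Gal(L/K)$ has prime order, not because $\l$ is tame.
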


\begin{rem} \label{remark1}
For $p>2$, and $K=\Q$, $M=\Q(\zeta_p)$, $G=1$, and then for every prime $p| (\ell-1)$, there exists a $\Z/p$-extension $L/\Q$ exactly ramified at $\ell$ (this is a subextension of $\Q(\zeta_\ell)/\Q$). 
\end{rem}

\subsubsection{Tame Ray class group of real quadratic fields}
Let $\ell,p \geq 5$ be primes such that $p \mid (\ell- 1)$.

Let $\Delta \in \mathcal{D}_{\ell}$, $K=\Q(\sqrt{\Delta})$ and let $\l$ and $\l'$ be the prime ideals above $\ell$ in $K$.  By Remark \ref{remark1}, there exists a $\Z/p$-extension $L/K$ unramified outside~$\ell$. By Theorem \ref{theo_GM} there is no $\Z/p$-extension of $K$ unramified outside $\l$ if and only if $p\nmid h(\Delta)$ and $\sigma_\l \neq 0 \in \Gal(K(\sqrt[p]{\varepsilon_\Delta})/K')$. Thus, we get the following

\begin{prop} \label{prop_unramifiedoutsidel}
    There is no $\Z/p\Z$-extension of $K$ unramified outside $\l$, if and only if
    \begin{enumerate}
    \item[$(i)$]$p\nmid h(\Delta)$ and, 
    \item[$(ii)$]  $\varepsilon_\Delta$   is not a $p$th power modulo $\l$, or equivalently $\log_{\l, p}(\varepsilon_\Delta)\neq 0$.
    \end{enumerate}
\end{prop}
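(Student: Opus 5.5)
We deduce the statement from Corollary \ref{coro_GM} applied to $K=\Q(\sqrt{\Delta})$ and the tame prime $\l$ above $\ell$ (tame because $N\l=\ell\equiv 1$ modulo $p$, since $\ell$ splits). Corollary \ref{coro_GM} already gives that there is no $\Z/p$-extension of $K$ unramified outside $\l$ if and only if $p\nmid h(\Delta)$ and $\sigma_\l\neq 0$ in $G=\Gal(M/K')$. So condition $(i)$ is immediate, and we only need to show that, \emph{once $p\nmid h(\Delta)$}, the condition $\sigma_\l\neq 0$ is equivalent to $(ii)$.

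First we identify $M$. If $p\nmid h(\Delta)$, then $\Cl_K[p]=0$, and the exact sequence for the Selmer group gives $V_K/(K^\times)^p\simeq E_K/E_K^p$. Since $K$ is real quadratic and $p$ is odd, $E_K=\{\pm1\}\times\varepsilon_\Delta^{\Z}$, so $E_K/E_K^p$ is cyclic of order $p$, generated by $\varepsilon_\Delta$ ($-1$ is a $p$th power). Hence $M=K'(\sqrt[p]{\varepsilon_\Delta})$. Moreover $\varepsilon_\Delta$ is not a $p$th power in $K'=K(\zeta_p)$, because $[K':K]$ is prime to $p$. So $G$ is cyclic of order $p$.

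Next, let $\Ll$ be a prime of $K'$ above $\l$. It is unramified in $M/K'$ because $\ell\neq p$ and $\varepsilon_\Delta$ is a unit. Since $\l$ splits completely in $K'/K$, the residue field of $\Ll$ is $\mathbf{F}_\ell=\mathcal{O}_K/\l$. By Kummer theory, the Frobenius $\sigma_\Ll$ is trivial if and only if $X^p-\varepsilon_\Delta$ has a root in that residue field, i.e.\ iff $\varepsilon_\Delta$ is a $p$th power modulo $\l$. Since $(\mathbf{Z}/\ell\mathbf{Z})^\times$ is cyclic and $p\mid \ell-1$, this is in turn equivalent to $\log_{\l,p}(\varepsilon_\Delta)=0$, because the kernel of $\log_{\ell,p}$ is exactly the subgroup of $p$th powers. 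Finally, the choice of $\Ll$ only changes $\sigma_\l$ by a nonzero scalar, as noted before Theorem \ref{theo_GM}, so whether $\sigma_\l$ vanishes does not depend on that choice.

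Combining the two steps: when $(i)$ holds, $\sigma_\l\neq 0$ is equivalent to $(ii)$. When $(i)$ fails, both sides of the claimed equivalence are false, so it holds trivially. The only point needing care is the identification $M=K'(\sqrt[p]{\varepsilon_\Delta})$. It uses $p\nmid h(\Delta)$, which is why we restrict to that case before comparing $\sigma_\l$ with $(ii)$. The remainder is standard Kummer theory.
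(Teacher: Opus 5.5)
Your proposal is correct and follows the paper's route. The paper deduces the proposition directly from Corollary \ref{coro_GM}: when $p\nmid h(\Delta)$ the governing field is $M=K'(\sqrt[p]{\varepsilon_\Delta})$, and $\sigma_\l\neq 0$ then translates into $\varepsilon_\Delta$ not being a $p$th power modulo $\l$. You supply the details the paper leaves implicit: the Selmer group computation, the residue field of $\Ll$ being $\mathbf{F}_\ell$, the Kummer criterion for the Frobenius, and the kernel of $\log_{\ell,p}$ being the $p$th powers.
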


Observe that $(ii)$ does not depend on the choice of $\l|\ell$.

\begin{defn}
Given a fundamental discriminant $\Delta>0$, set 
$$a_{\Delta,\l,p}:=h(\Delta) \cdot \log_{\l, p}(\varepsilon_\Delta) \in \Z/p\Z \text{.}$$
Observe that $a_{\Delta,\l,p}\neq 0$ if and only if, there is no $\Z/p\Z$-extension of $K$ unramified outside $\l$.
\end{defn}

\begin{prop}\label{prop_link_h_ell_unit} With the notation above, we have $a_{\Delta,\l, p}\neq 0$ if and only if $p\nmid h^-_\ell(\Delta)$.
\end{prop}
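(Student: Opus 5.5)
By the preceding Definition and Proposition \ref{prop_unramifiedoutsidel}, $a_{\Delta,\l,p}\neq 0$ holds exactly when $K$ has no $\Z/p\Z$-extension unramified outside $\l$. So it suffices to prove the following: $p\mid h^-_\ell(\Delta)$ if and only if such an extension exists. By class field theory, $\Z/p\Z$-extensions of $K$ unramified outside $\l$ correspond to surjections $\Cl_{K,\l}\to\Z/p\Z$. Here $\Cl_{K,\l}$ is the ray class group of modulus $\l$, taken with no conditions at the two real places; since $p$ is odd, archimedean ramification is irrelevant to $p$-parts anyway. The same applies to the full modulus $\ell=\l\l'$. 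So I will work with $A:=\Cl_{K,\ell}\otimes\Z_p$ and $A_\l:=\Cl_{K,\l}\otimes\Z_p$, and the claim becomes: $A^-\neq 0$ if and only if $A_\l\neq 0$. Since $p$ is odd, $A=A^+\oplus A^-$, where $A^\pm$ are the eigenspaces of the generator $\tau$ of $\Gal(K/\Q)$.

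The first step is to compute $A^+$. Because $\tau$ acts on $A^+$ trivially and $p\neq 2$, the norm map $A\to \Cl_{\Q,\ell}\otimes\Z_p$ restricts to an isomorphism of $A^+$ onto its image; in the standard way one identifies $A^+$ with the $p$-part of the ray class group of $\Q$ of modulus $\ell$ (up to index prime to $p$ coming from the transfer). Concretely, a $\Z/p\Z$-extension $L/K$ unramified outside $\ell$ on which $\tau$ acts trivially on $\Gal(L/K)$ descends, since $[K:\Q]=2$ is prime to $p$, to a $\Z/p\Z$-extension of $\Q$ unramified outside $\ell$. By Remark \ref{remark1} and the cyclicity of $(\Z/\ell\Z)^\times$, there is exactly one such extension, namely the degree $p$ subfield $F$ of $\Q(\zeta_\ell)$. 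Hence $A^+/pA^+\simeq\Z/p\Z$, and the corresponding extension is $KF/K$, which is ramified at both $\l$ and $\l'$.

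Next I compare $A$ and $A_\l$. We have a surjection $A\twoheadrightarrow A_\l$ whose kernel is generated by the image of the inertia at $\l'$, which is a quotient of $(O_K/\l')^\times\otimes\Z_p\simeq\Z_p/(\ell-1)$ and is cyclic. Working modulo $p$, that is with the $\mathbf F_p$-dual $X:=\mathrm{Hom}(A,\Z/p\Z)$, which is a $\tau$-module, the extensions unramified outside $\l$ form the kernel of a restriction map $X\to\mathrm{Hom}(I_{\l'},\Z/p\Z)\simeq\Z/p\Z$. The decomposition $X=X^+\oplus X^-$ has $\dim X^+=1$, and $X^+$ is spanned by $KF$, which is ramified at $\l'$. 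Here is the key symmetric argument. If $X^-\neq0$, take $\chi\in X^-$. If $\chi$ is unramified at $\l'$, we are done. Otherwise, twisting by a suitable multiple of $KF$ gives $\chi+c\,\psi_F$ unramified at $\l'$. This combination is nonzero because it has nonzero components in both $X^-$ and $X^+$. Hence $A_\l\neq0$. Conversely, suppose $\chi\neq0$ is unramified outside $\l$. Then $\tau\chi$ is unramified outside $\l'$, and $\chi-\tau\chi\in X^-$. If $\chi-\tau\chi=0$, then $\chi\in X^+=\langle\psi_F\rangle$ is ramified at $\l'$, a contradiction. So $X^-\neq0$, i.e.\ $p\mid h^-_\ell(\Delta)$.

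The main obstacle is the first step: justifying carefully that $\dim_{\mathbf F_p}X^+=1$ and that $X^+$ is spanned by the character cutting out $KF$. This is where the descent of $\tau$-invariant characters must be checked, and it uses $p\nmid[K:\Q]$. One also has to confirm that the $\pm$ eigenspace decomposition is compatible with the $p$-torsion and cotorsion, so that $A^-\neq0$ if and only if $X^-\neq0$. This holds because $p$ is odd, so eigenspace projection is exact and $A^-/p=0$ if and only if $A^-=0$.
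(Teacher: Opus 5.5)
Your proof is correct, but it is organized differently from the paper's.

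\textbf{The paper's route.} The paper first disposes of the case $p\mid h(\Delta)$, using that $\Cl_K\otimes\Z_p$ lies in the minus part. Assuming $p\nmid h(\Delta)$, it then argues with explicit fields.
\begin{itemize}
\item If $\log_{\l,p}(\varepsilon_\Delta)=0$, it takes the extension $L$ exactly ramified at $\l$ from Proposition~\ref{prop_unramifiedoutsidel}, together with its conjugate $L'$, which $\tau$ swaps.
\item Conversely, it takes a minus-extension $L$, forms the compositum with $L_0K$ (your $KF$), and passes to the fixed field of inertia at $\l'$.
\end{itemize}
These two constructions are exactly your $\chi-\tau\chi$ and $\chi+c\,\psi_F$.

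\textbf{Your route.} You put everything into the single $\mathbf{F}_p[\tau]$-module $X$. There, being unramified at $\l'$ is the kernel of a functional of rank at most one, and the only additional input is $X^+=\langle\psi_F\rangle$. This removes the case split on $p\mid h(\Delta)$: an everywhere-unramified $\chi$ is handled automatically. It also makes explicit why a nontrivial $\tau$-action produces a nonzero minus part, a step the paper states tersely.

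\textbf{The trade-off.} You need information about all of $X^+$. The paper needs only the existence of $L_0$, because the hypothesis $p\nmid h(\Delta)$ forces every extension it considers to ramify at $\l$.

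\textbf{Your flagged obstacle.} It is easy to settle. Let $j:\Cl_{\Q,\ell}\to\Cl_{K,\ell}$ be extension of ideals. Then $j\circ N=1+\tau$, which acts as $2$ on $A^+$, so $A^+$ injects into $((\Z/\ell\Z)^\times/\{\pm1\})\otimes\Z_p$. That group is cyclic, so $\dim X^+\le 1$.

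In fact you only use that every nonzero $\chi\in X^+$ is ramified at $\l'$. This already follows from your descent step: such a $\chi$ descends to a $\Z/p\Z$-extension of $\Q$ unramified outside $\ell$. Since $\Q$ has no nontrivial unramified extensions, that extension ramifies at $\ell$, so its base change to $K$ ramifies at both primes above $\ell$.
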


\begin{proof} Assume first that $p\mid h(\Delta)$. Since $h(\Delta)=h^-(\Delta)$, then $p\mid h_\ell^-(\Delta)$, so the Proposition trivially holds in this case. From now on, assume that $p\nmid h(\Delta)$. 

Assume $\log_{\l, p}(\varepsilon_\Delta)=0$. By Proposition \ref{prop_unramifiedoutsidel}, there exists a $\Z/p$-extension $L/K$ exactly ramified at $\l$ (\ie ramified at $\l$ and unramified outside $\l)$. The action of $\Gal(K/\Q)$ on $L/K$ produces a $\Z/p$-extension $L'/K$ exactly ramified at $\l'$. Let $K_\ell^{ab}$ be the maximal abelian  $p$-extension of $K$ unramified outside $\ell$. Then $LL' \subset K_\ell^{ab}$, and by Global Class Field Theory, $\Cl_{K,\ell} \otimes_{\Z} \Z_p \simeq \Gal(K_\ell^{ab}/K)$. Moreover $\Gal(K/\Q)$ acts on $\Cl_{K,\ell}$, and this action is not trivial (it permutes $L$ and $L'$). Thus, $p\mid h_\ell^-(\Delta)$, and we conclude that $a_{\Delta,\l,p}=0$ implies $p\mid h_\ell^-(\Delta)$. 

Suppose now that $p\mid h_\ell^-(\Delta)$. Since $p\nmid h(\Delta)=h^-(\Delta)$, there exists a $\Z/p$-extension $L/K$ such that $\Gal(K/\Q)$ acts by $-1$ on $\Gal(L/K)$, and in particular the extension $L/K$ is exactly ramified at $\l$ and at~$\l'$. Moreover, by Remark \ref{remark1} there exists a $\Z/p$-extension $L_0/\Q$ exactly ramified at $\ell$. The $\Z/p$-extension $L_0K/K$ is exactly ramified at $\l$ and at $\l'$, and the Galois group $\Gal(K/\Q)$ acts trivially on $\Gal(L_0K/K)$. Hence $L_0K\neq L$. In the compositum $L_0KL/K$ let us consider the subfield $L_1/K$ fixed by the inertia at $\l'$. This is $\Z/p$-extension unramified outside $\l$, and since $p\nmid h(\Delta)$, $L_1/K$ is exactly ramified at $\l$.  By Proposition~\ref{prop_unramifiedoutsidel} (ii), we get $\log_{\l, p}(\varepsilon_\Delta)=0$.
\end{proof}

\begin{rem} When $K$ and $p$ are fixed and $p\nmid h(\Delta)$, the density of primes $\ell$ with $a_{\Delta,\l,p}=0$ is equal to $\frac{1}{p}$ by the Chebotarev density theorem. This follows directly from Theorem \ref{theo_GM}.
\end{rem}


\subsection{The case of Sophie Germain primes}

The goal of this section is to prove the existence of $\Delta \in {\mathcal D}_\ell$ such that $a_{\Delta,\l,p}\neq 0$ 
when $\ell=2p+1$ (\ie $p=\frac{\ell-1}{2}$ is a \emph{Sophie Germain prime}).

\subsubsection{Upper bounds for the class number of real quadratic fields}

We shall need an upper bound for $h(\Delta)$, where $\Delta$ is the discriminant of a real quadratic field. By \cite[Theorem (a)]{bound_clgp}, we have 
\begin{equation}\label{eq_naive_bound}
h(\Delta) \leq \frac{\sqrt{\Delta}}{2}.
\end{equation}
However, this bound (which is valid for all $\Delta$) will not be good enough for our purposes. To get a bound on $h(\Delta)$, it is enough using the class number formula to get an upper bound on $\abs{L(\chi_{\Delta},1)}$ and a lower bound on $\epsilon_\Delta$. To the best of the authors' knowledge, the best general available explicit upper bound is given by \cite[Theorem 2]{Ramaré_new_bound}: we have $\abs{L(\chi_{\Delta},1)} \leq \frac{9}{20}\cdot \log(\Delta)$ for $\Delta \geq 2\cdot 10^{49}$, but this would also not be enough for our purposes. 

Instead, we are going to use two explicit upper-bounds due to \cite{bound_clgp} and \cite{Ramaré_old_bound}, which are better and valid even for relatively small $\Delta$, but assume that $3$ is inert in $K$ (and $2$ ramifies in $K$ for one of them; let us also note that the result we need from \cite{bound_clgp} follows easily from the techniques of \cite{Louboutin}). This turns out to be suitable for Theorem \ref{thm_SophieGermain}. The upper-bounds we get as a corollary of \cite{bound_clgp} and \cite{Ramaré_old_bound} are the following.

\begin{prop}\label{thm_bound_classgp}
Assume that $3$ is inert in $K$. If $\Delta \geq 442368$, we have $h(\Delta) \leq \frac{\sqrt{\Delta}}{3}$. If in addition $2$ ramifies in $K$ and $\Delta > 4\cdot 2^6\cdot 6^9\cdot e^{24} = 6.83\ldots\cdot 10^{19}$, then $h(\Delta) \leq \frac{\sqrt{\Delta}}{6}$.
\end{prop}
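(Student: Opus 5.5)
The plan is to use the class number formula for real quadratic fields,
$$h(\Delta)\log(\epsilon_\Delta) = \frac{\sqrt{\Delta}}{2}\, L(\chi_{\Delta},1),$$
and combine an explicit upper bound for $L(\chi_\Delta,1)$ with an elementary lower bound for $\log \epsilon_\Delta$.

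\textbf{Lower bound for the unit.} Write $\epsilon_\Delta = \frac{a+b\sqrt{\Delta}}{2}$ with $a,b\geq 1$ and $a^2-\Delta b^2=\pm 4$. Then $\epsilon_\Delta \geq \frac{\sqrt{\Delta-4}+\sqrt{\Delta}}{2}$, so $\log\epsilon_\Delta \geq \log(\sqrt{\Delta}) - o(1)$. More precisely, $\log \epsilon_\Delta \geq \frac12\log(\Delta-4)$, which suffices for explicit work.

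\textbf{Upper bound for $L(\chi_\Delta,1)$ when $3$ is inert.} I would use the Louboutin-type bound as in \cite{bound_clgp}. Since $\chi_\Delta(3)=-1$, the Euler factor at $3$ contributes $(1+1/3)^{-1}=3/4$. The resulting bound has the shape
$$L(\chi_\Delta,1)\leq \tfrac{1}{2}\cdot\tfrac{3}{4}\bigl(\log\Delta + \kappa\bigr)$$
for an explicit constant $\kappa$; the precise constants are taken from \cite{bound_clgp}. If in addition $2$ ramifies, the factor at $2$ also disappears. Ramaré's bound \cite{Ramaré_old_bound} with local factors then gives a sharper inequality of the form
$$L(\chi_\Delta,1)\leq \tfrac12\prod_{q\mid 6}\bigl(1-\chi_\Delta(q)/q\bigr)^{-1}\cdot\bigl(\log\Delta + c\bigr)$$
for a constant $c$ of size about $2\log 6$ plus lower-order terms. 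Since $2$ ramifies and $3$ is inert, the product equals $3/4$.

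\textbf{Combining the bounds.} Inserting both estimates into the class number formula gives
$$h(\Delta)\leq \sqrt{\Delta}\cdot \frac{C(\log\Delta+\kappa)}{2\log(\Delta-4)}.$$
The first claim $h(\Delta)\leq \sqrt{\Delta}/3$ then reduces to a one-variable numerical inequality in $\Delta$. One checks it at the threshold $442368$; beyond that point the ratio $(\log\Delta+\kappa)/\log(\Delta-4)$ decreases, so the inequality persists. The second claim $h(\Delta)\leq\sqrt{\Delta}/6$ is handled the same way. The threshold $4\cdot 2^6\cdot 6^9\cdot e^{24}$ is exactly the point where Ramaré's explicit error term becomes small enough; its shape, with $e^{24}$ and a power of $6$, suggests solving $\log\Delta \ge 24+\log(\cdots)$ so that $(\log\Delta+c)/\log\Delta$ drops below the required ratio.

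\textbf{Main obstacle.} The hard part is pinning down the exact constants in the cited explicit bounds. One has to check that, together with $\log\epsilon_\Delta\geq\frac12\log(\Delta-4)$, they produce precisely $1/3$ and $1/6$ from the stated thresholds onward. This is a calculus check, but it requires the cited theorems in their precise form, including the normalization of $\log\Delta$ versus $\log(\Delta/4)$ when $2$ ramifies.
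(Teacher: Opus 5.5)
\textbf{There is a genuine gap: the leading constant in your $L$-value bound is too large for either claim.}

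Your overall plan matches the paper's: class number formula, a lower bound for $\log\epsilon_\Delta$, an explicit upper bound for $L(\chi_\Delta,1)$, then a one-variable check. Your unit bound $\log\epsilon_\Delta\geq\frac12\log(\Delta-4)$ is correct, and even slightly better than the paper's $\log\epsilon_\Delta>\frac12\log\Delta-\log 2$.

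The problem is the leading constant, which is the whole content of the statement. Your lower bound for $\log\epsilon_\Delta$ is $\frac12\log\Delta+O(1)$. So a bound $L(\chi_\Delta,1)\leq c(\log\Delta+\kappa)$ only gives $h(\Delta)\leq c\sqrt{\Delta}\,(\log\Delta+\kappa)/\log(\Delta-4)$. You therefore need $c<\frac13$ for the first claim and $c<\frac16$ for the second, whatever $\kappa$ is. Both bounds you write down have $c=\frac12\cdot\frac34=\frac38$. Your ``sharper'' inequality for the second claim has the same product $\frac34$, because in your formula the ramified prime $2$ contributes the factor $1$. The deferred ``calculus check'' would then require $(\log\Delta+\kappa)/\log(\Delta-4)\leq\frac89$, resp.\ $\leq\frac49$. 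Both fail for all large $\Delta$.

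\textbf{The missing idea.} At a prime $q\mid h$ you gain more than the inverse Euler factor; you also gain a factor $1-\frac1q$ on the main $\log$ term. To see this, write $L(\chi_\Delta,1)\prod_{q\mid h}(1-\chi_\Delta(q)/q)=\sum_{(n,h)=1}\chi_\Delta(n)/n$. Bounding the initial segment trivially gives $\sum_{n\leq x,\,(n,h)=1}1/n\approx\prod_{q\mid h}(1-\frac1q)\log x$. The saving at $q$ is therefore $(1-\frac1q)/\lvert 1-\chi_\Delta(q)/q\rvert$, which equals $\frac12$ for $3$ inert and $\frac12$ again for $2$ ramified.

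This is exactly what the paper uses:
\begin{itemize}
\item For the first claim, Ramar\'e's corollary with $h=3$, $k=1$ gives $L(\chi_\Delta,1)<\frac14(\log\Delta+\log 12)$.
\item For the second claim, Lemma 3 of \cite{bound_clgp} gives $L(\chi_\Delta,1)<\frac18(\log\Delta+3\log 6+8)$ when $2$ ramifies and $3$ is inert.
\end{itemize}
You attached the two references to the wrong claims. Combined with $\log\epsilon_\Delta>\frac12\log\Delta-\log 2$, each claim reduces to $(\log\Delta+\mathrm{const})/(\log\Delta-\log 4)\leq\frac43$. Solving this gives precisely the thresholds $4\cdot 48^3=442368$ and $4\cdot 2^6\cdot 6^9\cdot e^{24}$.
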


\begin{proof}
We first check that the inequality $h(\Delta) \leq \frac{\sqrt{\Delta}}{3}$ holds for $\Delta \geq 442368$. Recall that by the class number formula, we have
$$h(\Delta) = \frac{\sqrt{\Delta}}{2\log(\epsilon_{\Delta})}\cdot L(\chi_{\Delta},1).$$
We have $\epsilon_{\Delta} = \frac{a+b\sqrt{\Delta}}{2}$ with $a,b\in \Z$. The assumption $\epsilon_{\Delta}>1$ implies easily $a,b>0$. In particular, we have $\epsilon_{\Delta} > \frac{\sqrt{\Delta}}{2}$ so $\log(\epsilon_{\Delta})>\frac{\log(\Delta)}{2}-\log(2)$. 

Using the assumption $\chi_{\Delta}(3)=-1$ (\ie $3$ is inert in $K$), a direct application of \cite[Corollary]{Ramaré_old_bound} with $q=\Delta$, $h=3$ and $k=1$ gives, for $\Delta \geq 25$:
$$\abs{L(\chi_{\Delta},1)} < \frac{1}{4}\cdot(\log(\Delta)+\log(12)).$$ 
(Let us note that the absolute values are actually superfluous, as $L(\chi_{\Delta},1)>0$ by the class number formula.)

We thus get
$$h(\Delta) < \frac{\sqrt{\Delta}}{4}\frac{\log(\Delta)+\log(12)}{\log(\Delta)-2\log(2)}.$$
For $\Delta \geq 4\cdot 48^3 = 442368$, we have 
$$\frac{\log(\Delta)+\log(12)}{\log(\Delta)-2\log(2)} = 1 + \frac{\log(48)}{\log(\Delta)-\log(4)} \leq \frac{4}{3}.$$
Therefore we get $h(\Delta) \leq \frac{\sqrt{\Delta}}{3}$, as wanted.

Let us now assume that $2$ ramifies in $K$. By \cite[Lemma 3]{bound_clgp}, we have 
$$\abs{L(\chi_{\Delta},1)} < \frac{1}{8}\cdot(\log(\Delta)+3\log(6)+8).$$ 
We thus get
$$h(\Delta) < \frac{\sqrt{\Delta}}{8}\frac{\log(\Delta)+3\log(6)+8}{\log(\Delta)-2\log(2)}.$$
For $\Delta \geq 4\cdot 2^6\cdot 6^9\cdot e^{24} = 6.83\ldots\cdot 10^{19} $, we have
$$\frac{\log(\Delta)+3\log(6)+8}{\log(\Delta)-2\log(2)} = 1 + \frac{\log(4)+3\log(6)+8}{\log(\Delta)-\log(4)} \leq \frac{4}{3}.$$
Therefore we get $h(\Delta) \leq \frac{\sqrt{\Delta}}{6}$, as wanted.
\end{proof}

\begin{rem}
One checks numerically that the inequality $h(\Delta) \leq \frac{\sqrt{\Delta}}{3}$ (with $3$ inert in $K$) holds for $\Delta < 442368$ unless $\Delta \in \{5,8\}$.
\end{rem}

\subsubsection{Explicit construction of certain real quadratic fields}
Recall that $p$ and $\ell$ are primes such that $p \mid (\ell-1)$. 

\begin{lem}\label{lem_a_b_primes}
Assume $p>2$. Let $a,b\in \Z$ and $q\neq \ell$ be an odd prime (possibly equal to $p$).
Suppose that 
\begin{enumerate}[label=(\roman*)]
    \item\label{lem_a_b_primes_i} $b=a^2\pm 1$,
    \item\label{lem_a_b_primes_ii} $\left(\dfrac{\ell^2+2a\ell +b}
{q}\right)=-1$,
\item\label{lem_a_b_primes_iii} $b\equiv c^2 \text{ (modulo } \ell\text{)}$ for some $c \in \Z$,
\item\label{lem_a_b_primes_iv} $\log_{\ell,p}(a+c) \neq 0$.
\end{enumerate}
Let $\Delta$ be the discriminant of the number field $K = \Q(\sqrt{\ell^2+2a\ell +b})$. Then we have $\Delta \in \mathcal{D}_{\ell}$, $q$ is inert in $K$ and $\log_{\l,p}(\epsilon_{\Delta}) \neq 0$ for any prime $\mathfrak{l}$ above $\ell$ in $K$.
\end{lem}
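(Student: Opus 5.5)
The plan is to exhibit an explicit unit of $K$ and compute its reduction modulo a prime above $\ell$. Set $D:=\ell^2+2a\ell+b$. By \ref{lem_a_b_primes_i},
$$D=(\ell+a)^2+(b-a^2)=(\ell+a)^2\pm 1 .$$
Hence $u:=(\ell+a)+\sqrt{D}$ is an algebraic integer of norm $(\ell+a)^2-D=\mp 1$, so it is a unit of $K$. Write $D=f^2\Delta$ with $\Delta$ the fundamental discriminant of $K$ (up to the usual factor $4$). By \ref{lem_a_b_primes_ii}, $q\nmid D$ and $D$ is not a square, so $K$ is a genuine real quadratic field. Moreover $\left(\frac{\Delta}{q}\right)=\left(\frac{D}{q}\right)=-1$ because $q\nmid f$, so $q$ is inert in $K$.

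Next, check that $\ell$ splits. Modulo $\ell$ we have $D\equiv b\equiv c^2$. I must rule out $c\equiv 0 \pmod{\ell}$. If $c\equiv 0$, then $b\equiv 0$, so $a^2\equiv \mp 1$ and hence $a^4\equiv 1 \pmod{\ell}$. This gives $4\log_{\ell,p}(a)=0$, and since $p$ is odd, $\log_{\ell,p}(a+c)=\log_{\ell,p}(a)=0$, contradicting \ref{lem_a_b_primes_iv}. Thus $\ell\nmid D$, so $\ell\nmid f$, and $\chi_\Delta(\ell)=\left(\frac{D}{\ell}\right)=1$. This gives $\Delta\in\mathcal{D}_\ell$.

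Now compute $u$ modulo the primes above $\ell$. Since $D\equiv c^2$ with $c\not\equiv 0$, the two primes $\l,\l'$ above $\ell$ can be labelled so that $\sqrt{D}\equiv c \pmod{\l}$ and $\sqrt{D}\equiv -c \pmod{\l'}$. Then $u\equiv a+c \pmod{\l}$ and $u\equiv a-c \pmod{\l'}$. By \ref{lem_a_b_primes_iv}, $\log_{\l,p}(u)\neq 0$. For $\l'$, use
$$(a+c)(a-c)=a^2-c^2\equiv a^2-b=\mp1 \pmod{\ell}.$$
Since $\pm1$ lies in the kernel of $\log_{\ell,p}$ ($p$ odd), this gives $\log_{\l',p}(u)=-\log_{\l,p}(u)\neq 0$. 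This is consistent with the earlier remark that the vanishing condition is independent of the choice of prime.

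Finally, pass to the fundamental unit. We have $u=\pm\epsilon_\Delta^k$ for some $k\in\Z$, so
$$\log_{\l,p}(u)=k\log_{\l,p}(\epsilon_\Delta)+\log_{\l,p}(\pm1)=k\log_{\l,p}(\epsilon_\Delta).$$
Since the left side is nonzero, $\log_{\l,p}(\epsilon_\Delta)\neq 0$ for every $\l\mid\ell$.

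The only genuinely delicate step is excluding $c\equiv 0$, where $\ell$ would ramify. The argument above handles it via $a^4\equiv1$ and the oddness of $p$. The remaining care is bookkeeping: the square factor $f$ (and a possible factor $4$) relating $D$ to $\Delta$ must not affect the Legendre symbols at $q$ and $\ell$, which holds because both primes are odd and coprime to $D$.
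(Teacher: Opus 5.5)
Your proof follows the paper's route. You use the same unit $u=\ell+a+\sqrt{D}$ of norm $\mp1$, and you exclude $\ell\mid c$ the same way, via $a^4\equiv 1$ and $p$ odd. You then reduce $u\equiv a+c \pmod{\mathfrak{l}}$ and pass to $\epsilon_\Delta$. Your explicit check at the second prime $\mathfrak{l}'$ via $(a+c)(a-c)\equiv\mp1$ is a welcome addition; the paper relies on the remark in its Notation section instead.

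There is, however, one false inference. From (ii) you deduce that $D$ is not a square, ``so $K$ is a genuine real quadratic field''. Not being a square does not give $D>0$. Since $D=(\ell+a)^2\pm1\geq -1$, the only problematic value is $D=-1$, which occurs exactly when the sign is $-$ and $a=-\ell$. Hypothesis (ii) does not exclude it: $\left(\frac{-1}{q}\right)=-1$ whenever $q\equiv 3 \pmod 4$.

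In that case $K=\Q(i)$, so $\Delta=-4\notin\mathcal{D}_\ell$. Also $u=i$ is a root of unity, not of the form $\pm\epsilon_\Delta^k$. So both your conclusion $\Delta\in\mathcal{D}_\ell$ and your final step fail. Nothing later in your argument catches this: your splitting check only returns $\left(\frac{-1}{\ell}\right)=1$, which is consistent with (iii).

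This case must be ruled out using (iv), which is what the paper does. If $a=-\ell$, then $b=\ell^2-1\equiv -1 \pmod{\ell}$. Hence $c^2\equiv -1$ and $c^4\equiv 1$, so $4\log_{\ell,p}(c)=0$. Since $p$ is odd, $\log_{\ell,p}(a+c)=\log_{\ell,p}(c)=0$, contradicting (iv). The value $D=0$ is excluded by (ii), as you note. Insert this before asserting that $K$ is real, and the rest of your argument goes through unchanged.
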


\begin{proof} Let $D = \ell^2+2a\ell +b = (\ell+a)^2 \pm 1$ and $K=\Q(\sqrt{D})$.
Note that if $D\leq  0$, then  either $D=-1$ and $a=-\ell$,  which contradicts  \ref{lem_a_b_primes_iv} using the  assumption $p>2$,  or $D=0$ which contradicts assumption \ref{lem_a_b_primes_ii}. Thus, we have $D>0$. We also have $K\neq \Q$ by \ref{lem_a_b_primes_ii}. Note that $\ell \nmid c$, as otherwise $a^2 \equiv \mp 1 \text{ (modulo }\ell\text{)}$, which would contradict \ref{lem_a_b_primes_iv} since $p>2$. Finally $\ell$ splits in $K$ by (iii) and the fact that $\ell \nmid c$. Therefore, we have proved that the discriminant $\Delta$ of $K$ satisfies $\Delta \in \mathcal{D}_{\ell}$.

Let $\mathfrak{l}$ be the prime above $\ell$ in $K$ such that $-c+\sqrt{D} \in \l$, and let $u=\ell + a +\sqrt{D}$, which belongs to the ring of integers of~$K$. By \ref{lem_a_b_primes_i}, we have $N_{K/\Q}(u)=a^2-b=\mp 1$. Hence, $u$ is a (nonzero) power of the fundamental unit $\epsilon_{\Delta}$ of~$K$. We have $u \equiv a+c \text{ (modulo }\mathfrak{l}\text{)}$, so by (iv) $\log_{\l,p}(u ) \neq 0$, which implies  $\log_{\l,p}(\epsilon_{\Delta}) \neq 0$. 
\end{proof}

\subsubsection{Sophie Germain primes}

Let us apply this construction in the case of Sophie Germain primes. 

\begin{thm}\label{thm_SophieGermain}
Assume $\ell \geq 11$ is such that $\frac{\ell-1}{2}$ is a Sophie Germain prime, \ie $\ell=2p+1$ with $p\geq 5$. Then there exists $\Delta \in \mathcal{D}_{\ell}$ such that $$ h(\Delta) \cdot \log_{\l,p}(\epsilon_{\Delta}) \neq 0$$ for any prime $\mathfrak{l}$ above $\ell$ in $K=\Q(\sqrt{\Delta})$ and $3$ is inert in $K/\Q$.
\end{thm}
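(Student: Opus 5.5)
The plan is to apply Lemma~\ref{lem_a_b_primes} with $q=3$ and a small, explicit pair $(a,b)$ with $b=a^2\pm1$. This gives directly $\Delta\in\mathcal{D}_\ell$, $3$ inert in $K$, and $\log_{\l,p}(\epsilon_\Delta)\neq 0$. It then remains to get $p\nmid h(\Delta)$, which I will obtain by showing $h(\Delta)<p$ via Proposition~\ref{thm_bound_classgp}.

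First I simplify the conditions of Lemma~\ref{lem_a_b_primes} using $\ell=2p+1$. The kernel of $\log_{\ell,p}$ is the subgroup of $p$-th powers in $(\Z/\ell\Z)^\times$, which has order $2$, i.e.\ it is $\{\pm1\}$. So condition \ref{lem_a_b_primes_iv} just says $a+c\not\equiv\pm1 \pmod{\ell}$. Since $p\geq5$ and $3\nmid \ell=2p+1$, we have $p\equiv 2\pmod 3$ and hence $\ell\equiv 2\pmod 3$. Condition \ref{lem_a_b_primes_ii} with $q=3$ then becomes $D=\ell^2+2a\ell+b\equiv 1+a+b\equiv 2\pmod 3$. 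Also $p$ odd forces $\ell\equiv 3\pmod 4$, so $-1$ is a non-square mod $\ell$ and $2$ is a square iff $\ell\equiv 7\pmod 8$.

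I then run a short case analysis over small $a$ (say $|a|\leq 4$) and the two signs in $b=a^2\pm1$, split according to $\ell \bmod 8$ and the quadratic residue symbols of a few small primes mod $\ell$. In each case I want to exhibit $(a,b)$ with:
\begin{itemize}
\item $1+a+b\equiv 2 \pmod 3$;
\item $b$ a square mod $\ell$, with square root $c$;
\item $c\not\equiv \pm1-a \pmod{\ell}$ for at least one choice of $c$.
\end{itemize}
For example, $a=1$, $b=2$ works when $\ell\equiv7\pmod 8$, provided $\sqrt2\not\equiv 0,-2$. When it fails I try $a=-1$, $a=\pm2$ with $b\in\{3,5\}$, and so on, using quadratic reciprocity to cover all residue classes. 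This bookkeeping, i.e.\ proving that some small $(a,b)$ always works, is the step I expect to be the main obstacle. The sign of $c$ is free, which helps: both $c$ and $-c$ fail only if $a\pm c\in\{\pm1\}$ for both signs, which pins down $b$ and is easy to exclude.

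Finally I bound the class number. Write $D=(\ell+a)^2\pm1$. Then $\Delta$ is either $D/m^2$ or $4D/m^2$.
\begin{itemize}
\item If $\Delta\leq D$, or $\Delta=4D/m^2$ with $m\geq2$, then $\sqrt\Delta\leq \ell+|a|+1$. For $\Delta\geq442368$, Proposition~\ref{thm_bound_classgp} gives $h(\Delta)\leq(\ell+|a|+1)/3<(\ell-1)/2=p$, once $\ell$ exceeds a small constant.
\item If $\Delta=4D$, then $2$ ramifies in $K$ and $\sqrt\Delta\approx 2\ell$. Here I use the sharper bound $h(\Delta)\leq\sqrt\Delta/6\approx\ell/3<p$, which is valid for $\Delta>6.83\cdot10^{19}$.
\end{itemize}
In either case $h(\Delta)<p$, so $p\nmid h(\Delta)$. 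To reduce the range where $\Delta=4D$ is forced, I would prefer choices with $D\equiv1\pmod 4$ when possible. The finitely many remaining $\ell$, those below the thresholds in Proposition~\ref{thm_bound_classgp} (roughly $\ell<10^{10}$ in the $\Delta=4D$ case, far fewer otherwise), would be handled by a direct computation of $h(\Delta)$. I expect this residual verification to be the second delicate point.
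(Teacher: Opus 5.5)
\textbf{Gap: the existence step is missing.} Your outline matches the paper's: apply Lemma~\ref{lem_a_b_primes} with $q=3$, bound $h(\Delta)$, and check small $\ell$ by computer. But you never exhibit a pair $(a,b)$ that works for every $\ell$, and the one explicit example you give is wrong. For $(a,b)=(1,2)$ one has $1+a+b=4\equiv 1 \pmod 3$; equivalently $D=(\ell+1)^2+1\equiv 1\pmod 3$, because $3\mid \ell+1$. So $3$ splits and condition (ii) fails, by your own criterion.

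The sign of $c$ also gives no extra freedom. Since $(a+c)(a-c)=a^2-b=\mp 1$ and $\log_{\ell,p}(\pm1)=0$, we get $\log_{\ell,p}(a+c)=-\log_{\ell,p}(a-c)$, so both choices succeed or fail together.

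The class-number step has two holes as written:
\begin{itemize}
\item Your first bullet only treats $\Delta\geq 442368$. That need not hold, since $D$ may have a large square factor; the complementary case needs the naive bound (\ref{eq_naive_bound}).
\item Your second bullet leaves a verification of $h(\Delta)$ for $\Delta\approx 4\ell^2$ and all $\ell$ up to about $10^{10}$. The paper avoids this by searching small $D\leq 236$ for those $p$.
\end{itemize}

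\textbf{The fix: take $(a,b)=(-2,3)$.} Condition (i) holds since $3=a^2-1$. Since $\ell\equiv 11\pmod{12}$, quadratic reciprocity gives $\left(\frac{3}{\ell}\right)=1$, so (iii) holds. Next, $D=\ell^2-4\ell+3=(\ell-1)(\ell-3)\equiv 2\pmod 3$, giving (ii). If $c-2\equiv\pm1$ with $c^2\equiv 3$, then $c\equiv 3$ or $c\equiv 1$, forcing $3\equiv 9$ or $3\equiv 1 \pmod{\ell}$, which is impossible; so (iv) holds.

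Moreover $D=4p(p-1)$, so $\Delta\mid D$ and $\Delta<4p^2$. The naive bound (\ref{eq_naive_bound}) then gives $h(\Delta)\leq\sqrt{p(p-1)}<p$ directly. Hence the discriminant of $\Q(\sqrt{p(p-1)})$ works for every Sophie Germain prime $p\geq 5$. This uses neither Proposition~\ref{thm_bound_classgp} nor any numerical verification.

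\textbf{Comparison with the paper.} The paper chooses $a\equiv 3,5,7\pmod{\ell}$ by the Chinese remainder theorem with $-4\ell<a<2\ell$. This lets $D$ be as large as about $9\ell^2$. It therefore needs the Ramar\'e/Louboutin-type bounds, the sharper of which is valid only for $\Delta>6.8\cdot 10^{19}$, plus a computer check over all $p<4.13\cdot 10^9$. Your instinct to keep $|a|$ small was right, and $a=-2$ (with $b=a^2-1$) is exactly the choice that puts $D$ below $4p^2$, where the trivial bound already suffices.
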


\begin{proof}
We check numerically that the theorem holds for $p$ such that $p^2<2^6\cdot 6^9\cdot e^{24}$, \ie $p< 4.13\ldots \cdot 10^9$. More precisely, in this situation there are $11,912,302$ Sophie Germain primes, and for each one  we can find a field $K$ with discriminant $D \leq 236$ that satisfies the theorem. 
We can thus assume that $p^2\geq 2^6\cdot 6^9\cdot e^{24}$.

Let us apply the construction of Lemma \ref{lem_a_b_primes}. We first claim that there exists $a \in \{3,5,7\}$ such that $a^2-1$ is a square modulo $\ell$. Indeed, otherwise we would get
$$\left(\dfrac{8}{\ell}\right)=\left(\dfrac{24}{\ell}\right)=\left(\dfrac{48}{\ell}\right)=-1$$
which is clearly impossible. Now, choose $a \in \Z$ such that the following three conditions are satisfied:
\begin{itemize}
\item $a \equiv 3,5 \text{ or }7 \text{ (modulo }\ell\text{)}$ such that $a^2-1$ is a square modulo $\ell$ (this is possible by the above argument),
\item $2 \mid a$,
\item $a \equiv 1 \text{ (modulo }3 \text{)}$.
\end{itemize}
By the Chinese remainder theorem, we can find such an $a$ with $-4\ell < a < 2\ell$. Let $b=a^2-1$, and consider the number field $K=\Q(\sqrt{D})$ where $D=\ell^2+2a\ell+b$. Let $\Delta$ be the discriminant of $K$, which divides $D$ since $D\equiv 0 \text{ (modulo }4\text{)}$ (because $b$ is odd).

Note that the conditions (i) and (iii) of Lemma \ref{lem_a_b_primes}  are satisfied by construction. Furthermore, the condition $a \equiv 1 \text{ (modulo }3 \text{)}$ implies $D  \equiv -1 \text{ (modulo }3\text{)}$ (we use the fact that $\ell \equiv -1 \text{ (modulo }3\text{)}$ as $\ell=2p+1$ with $p>3$ prime). Thus, condition (ii) is satisfied for $q=3$.
Let us check that condition (iv) is also satisfied. Let $c$ such that $b\equiv c^2 \text{ (modulo }\ell\text{)}$. If $\log_{\ell,p}(a+c)=0$, then $a+c$ is a $p$th power modulo $\ell$. Since $\ell=2p+1$, this means that $a+c \equiv \pm 1 \text{ (modulo }\ell\text{)}$. We get $c \equiv -a\pm 1 \text{ (modulo }\ell\text{)}$, so $c^2 \equiv (a \pm 1)^2 \equiv a^2+1\pm 2a \text{ (modulo }\ell\text{)}$, so $a \equiv \pm 1 \text{ (modulo }\ell\text{)}$, which is impossible since $a \equiv 3,5 \text{ or }7 \text{ (modulo }\ell\text{)}$ and $\ell>3$.
By Lemma \ref{lem_a_b_primes}, we get that $\Delta \in \mathcal{D}_{\ell}$ (in particular, $K$ is a real quadratic field) and $\log_{\l,p}(\epsilon_\Delta) \neq 0$. 

\smallskip

Let us now prove that $p \nmid h(\Delta)$. For the sake of a contradiction, assume $p \mid h(\Delta)$ and in particular $h(\Delta) \geq p$. Since $\Delta \geq 4h(\Delta)^2$ by (\ref{eq_naive_bound}), we get $\Delta \geq 4p^2 \geq 4\cdot 2^6\cdot 6^9\cdot e^{24}$.  

Assume first that $\Delta$ is odd. Then $\Delta$ divides $\frac{D}{4} = \frac{(\ell+a)^2-1}{4} = \frac{(\ell+a-1)}{2}\cdot \frac{(\ell+a+1)}{2}$. Since visibly $\frac{D}{4}$ is even, actually $\Delta$ divides $\frac{D}{8}$. Since $\Delta \geq 4 \cdot 2^6\cdot 6^9\cdot e^{24} \geq 442368$, by the first inequality of Proposition \ref{thm_bound_classgp} we get $h(\Delta) \leq \frac{1}{3}\cdot \sqrt{\frac{D}{8}}= \frac{1}{6\sqrt{2}}\cdot \sqrt{D}$. Since $D = (\ell+a)^2-1$, we have $h(\Delta)<\frac{1}{6\sqrt{2}}\cdot \abs{\ell+a}$, and since $-4\ell < a < 2\ell$, we get $h(\Delta)<\frac{1}{2\sqrt{2}}\ell = \frac{1}{\sqrt{2}}p+\frac{1}{2\sqrt{2}} < p$, which is a contradiction. This proves that $p\nmid h(\Delta)$.

Assume now that $\Delta$ is even. Since $\Delta \geq 4 \cdot 2^6\cdot 6^9\cdot e^{24}$, $2$ ramifies in $K$ and $3$ is inert in $K$, the second inequality of Proposition \ref{thm_bound_classgp} yields $h(\Delta) \leq \frac{\sqrt{\Delta}}{6} \leq \frac{\sqrt{D}}{6}$. 
Since $D = (\ell+a)^2-1$, we have $h(\Delta)<\frac{1}{6}\cdot \abs{\ell+a}$. Recall that $-4\ell < a < 2\ell$, so $\abs{a+\ell}\leq 3\ell-1$. Actually, we claim that $\abs{a+\ell}\leq 3\ell-3$. Otherwise, we would have $a \in \{2\ell-1, 2\ell-2, -4\ell+1, -4\ell+2\}$, which would imply $a\equiv \pm 1, \pm 2 \text{ (modulo }\ell\text{)}$, \ie $a^2 \equiv 1, 4 \text{ (modulo } \ell\text{)}$, which is impossible as $a\equiv 3, 5, 7 \text{ (modulo }\ell\text{)}$ (by construction) and $\ell > 7$. We get $h(\Delta) < \frac{1}{6}\cdot (3\ell-3) = p$. This is a contradiction.
\end{proof}


\subsection{Heuristic observation}\label{section_heuristic}

In our context, it is natural to ask the following question: \textit{
Given two primes $p$ and $\ell$ such $p\mid (\ell-1)$, how often  $a_{\Delta,\l,p}\neq 0$
 as  $\Delta$ ranges over $\Dd_{\ell}$?}

Let us then conduct some numerical simulations.
For $p\mid \ell -1$, $X \geq 2$, set $$N_{\ell}(X)=\# \{0<\Delta\leq X \text{ such that }\Delta \in \Dd_\ell\}$$ 
and
$$
M_{p,\ell}(X)=\#\{0<\Delta\leq X \text{ such that } \Delta \in \Dd_\ell \text{ and } p\nmid h_\ell^-(\Delta)\}\cdot
$$

Using PARI/GP \cite{PARI}, we get, for $X=10^8$:
$$M_{3,7}/N_7 \approx 0.5788; \  M_{3,13}/N_{13} \approx 0.5779; $$
$$M_{5,11}/N_{11} \approx 0.7630; \ M_{5,101}/N_{101} \approx 0.7625;$$
$$M_{7,29}/N_{29}\approx 0.8380; \ M_{7,43}/N_{43} \approx 0.8381.$$
These results should be compared with the quantities   $\eta^1(3) \approx 0.5925$, 
$\eta^1(5)\approx 0.7680$, and $\eta^1(7) \approx 0.8396$.

As we noted in the introduction, we suspect that the quantity $M_{p,\ell}(X)/N_{\ell}(X)$ converges to $\eta^1(p)$ as $X$ tends to infinity. The interpretation could be as follows. 
First of all, as with the Cohen--Lenstra heuristics for the $p$-Sylow subgroup of the class group of quadratic fields, everything takes place in the minus part. Next, let us recall the central idea that allows us to move from a density of indivisibility for class groups of imaginary quadratic fields to a density of indivisibility for class groups  of real quadratic fields. It is based on the following principle: For a family of abelian groups (corresponding to the class group of imaginary quadratic fields), there is an associated family of groups in which each group is the quotient group of a randomly chosen element of a group from the original family (corresponding to the class group of real quadratic fields).
We then move from $\eta^1(p)$ to $\eta^2(p)$. Observe now that in the case of real quadratic fields, we move from the ray class group of modulus $\l$ to the class group by taking the  quotient with respect to the ramification subgroup at $\l$, which is cyclic in this case. Within the heuristic framework, this can also be viewed as a quotient by some arbitrary  element. Thus, we return from $\eta^2(p)$ to $\eta^1(p)$.

\section{Eisenstein congruences in weight $2$}\label{section_Eisenstein_congruences}

\subsection*{Notation}
We keep the same notation as above. In this section, we assume $p\geq 5$. 
Recall that $\Delta$ denotes the discriminant of a real quadratic field $K=\Q(\sqrt{\Delta})$.
We fix an embedding $K \hookrightarrow \R$. We let $h^+(\Delta)$ be the cardinality of the \emph{narrow} class group $\Cl^+(K)$, and $\epsilon^+_{\Delta}$ be the smallest \emph{totally positive} unit of $K$ which is $>1$. Note that here the $+$ in the exponent is not related to the action of $\Gal(K/\Q)$ (contrary to the notation $h^-_{\ell}(\Delta)$).
Note that $h^+(\Delta)=2h(\Delta)$ or $h(\Delta)$, depending on whether $\epsilon^+_{\Delta} = \epsilon_{\Delta}$ or $\epsilon^+_{\Delta} = \epsilon_{\Delta}^2$ respectively. In any case, we have $(\epsilon^+_{\Delta})^{h^+(\Delta)} = \epsilon_{\Delta}^{2h(\Delta)}$.

\subsection{Review of Mazur's Eisenstein ideal}\label{section_review_Eisenstein}
In this paragraph, we briefly recall some well-known facts about the Eisenstein ideal theory as developed by Mazur in his seminal paper \cite{Mazur_Eisenstein}. Let us note that, following Mazur, most papers on this topic have used the notation $N$ for our prime $\ell$. Since the focus of our paper is classical algebraic number theory, we have decided to use the more common letter $\ell$ to denote that prime.

Let $\mathbb{T}^0$ be Hecke algebra over $\Z$ acting on the space $S_2(\Gamma_0(\ell))$ of weight $2$ cuspforms of level $\Gamma_0(\ell)$. It is generated by the Hecke operators $T_q$ for primes $q\neq \ell$ as well as the operator $U_{\ell}$. Mazur defined the \emph{Eisenstein ideal} $I \subset \mathbb{T}^0$ as the ideal generated by the operators $T_q-q-1$ for primes $q \neq \ell$ and by $U_{\ell}-1$. Mazur proved in \cite[Proposition II.9.7]{Mazur_Eisenstein} that $\mathbb{T}^0/I$ is cyclic of order the numerator of $\frac{\ell-1}{12}$. In particular, since $p\geq 5$ divides $\ell-1$, there is a ring homomorphism 
\begin{equation}\label{eq_T_Z/p}
\mathbb{T}^0 \rightarrow \Z/p\Z
\end{equation}
whose kernel is the maximal ideal $\mathfrak{P} = I+p\mathbb{T}^0$. 

Mazur constructed in \cite[Proposition II.18.9 and Theorem II.18.10]{Mazur_Eisenstein} an \emph{explicit} group isomorphism
\begin{equation}\label{eq_Mazur_I/I^2}
I/I^2 \xrightarrow{\sim} (\Z/\ell\Z)^{\times}/\mu_{12} \text{,}
\end{equation}
where $\mu_{12}$ is the subgroup of elements of order dividing $12$ in $(\Z/\ell\Z)^{\times}$. In particular, if we denote by $\mathbb{T}^0_I$ the $I$-adic completion of $\mathbb{T}^0$, then the ideal $I\cdot \mathbb{T}^0_I$ is principal. Similarly, denoting by $\mathbb{T}^0_{\mathfrak{P}}$ the $\mathfrak{P}$-adic completion of $\mathbb{T}^0$, the ideal $I\cdot \mathbb{T}^0_{\mathfrak{P}}$ of $\mathbb{T}^0_{\mathfrak{P}}$ is principal. 

Consider the first singular homology group $H_1(X_0(\ell), \Z)$ of the compact modular curve $X_0(\ell)$. It carries a natural action of $\mathbb{T}^0$ as well as the complex conjugation $c$ (induced by the map $x+iy\mapsto -x+iy$ on the upper-half plane). Following the notation of \cite[II.18]{Mazur_Eisenstein}, we denote by $H_+$ the subgroup of $H_1(X_0(\ell), \Z)$ fixed by $c$. By \cite[Proposition 5]{Merel_accouplement}, $H_1(X_0(\ell), \Z)$ is \emph{acyclic} for the action of $c$, \ie $H_+ = (1+c)\cdot H_1(X_0(\ell), \Z)$. If $\alpha$ and $\beta$ are elements in the completed upper-plane (\ie elements of $\mathbf{P}^1(\Q)$ or of the upper-half plane), we denote by $\{\alpha, \beta\}$ the relative homology class of the image in $X_0(\ell)$ of the geodesic path between $\alpha$ and $\beta$. We have $\{\alpha, \beta\} \in H_1(X_0(\ell), \Z)$ if the images of $\alpha$ and $\beta$ in $X_0(\ell)$ coincide.

Mazur's homomorphism (\ref{eq_Mazur_I/I^2}) can be conveniently described using $H_+$, using a slight modification of \cite[II.18]{Mazur_Eisenstein} relying on the acyclicity of $H_1(X_0(\ell), \Z)$ for $c$. This modification is due to Merel \cite[\S 4.1]{Merel_accouplement}, who used the subgroup $U$ of $12$th powers of $(\Z/\ell\Z)^{\times}$ instead of our quotient $(\Z/\ell\Z)^{\times}/\mu_{12}$ (the two groups being canonanically isomorphic).

Namely, \cite[Proposition II.18.8]{Mazur_Eisenstein} implies that there is a group isomorphism
\begin{equation}\label{eq_Mazur_H/I}
H_+/I\cdot H_+ \xrightarrow{\sim} (\Z/\ell\Z)^{\times}/\mu_{12} 
\end{equation}
sending $(1+c)\cdot \{z_0, \gamma\cdot z_0\}$ to $d$ modulo $\ell$, where $\gamma = \begin{pmatrix} * & * \\ * & d \end{pmatrix} \in \Gamma_0(\ell)$ and $z_0$ is any point in the completed upper-half plane, \ie an element of $\mathbf{P}^1(\Q)$ or of the upper-half plane (the element $ \{z_0, \gamma\cdot z_0\}$ of $H_1(X_0(\ell), \Z)$ does not depend on the choice of $z_0$). 

Following \cite[II.18 Definition p. 137]{Mazur_Eisenstein}, consider the \emph{winding homomorphism}
\begin{equation}\label{eq_winding_hom}
e_+ : I \rightarrow H_+
\end{equation}
defined by sending $\eta \in I$ to $\eta\cdot \{0, \infty\}$.  Let $(H_+)_I$ be the $I$-adic completion of $H_+$, which is canonically identified with $H_+ \otimes_{\mathbb{T}^0} \mathbb{T}^0_I$. By \cite[Theorem II.18.10]{Mazur_Eisenstein}, $e_+$ induces an \emph{isomorphism} 
$$e_+ : I \cdot \mathbb{T}^0_I \rightarrow (H_+)_I \text{.}$$
In particular, $e_+$ induces a group isomorphism $I/I^2 \xrightarrow{\sim} H_+/I\cdot H_+$, and the map (\ref{eq_Mazur_I/I^2}) is then the composition of (\ref{eq_Mazur_H/I}) with that latter isomorphism.

\subsection{Eisenstein congruence and periods}\label{section_periods}

The ring homomorphism (\ref{eq_T_Z/p}) corresponds to the existence of a newform $F = \sum_{n\geq 1} a_n(F)q^n\in S_2(\Gamma_0(\ell))$ such that, for all primes $q \neq \ell$, we have $a_q(F) \equiv q+1 \text{ (modulo } \mathfrak{P}_F \text{)}$, and $a_{\ell}(F) \equiv 1 \text{ (modulo } \mathfrak{P}_F \text{)}$ (actually, it is known that $a_{\ell}(F)=1$). Here, $\mathfrak{P}_F$ is a certain maximal ideal above $p$ in the ring of coefficients $\mathcal{O}_F := \Z[a_n(F)]_{n\geq 1}$ of $F$, which an order in the ring of integers of the Hecke field $K_F := \Q(a_n(F), n\geq 1) \subset \C$. Such a form may not be unique in general.

Equivalently, we have an \emph{Eisenstein congruence}
\begin{equation}\label{eq_Eis_congruence}
F \equiv E_{2, \ell} \text{ (modulo }\mathfrak{P}_F \text{)}
\end{equation}
where 
$$E_{2,\ell} = \frac{\ell-1}{24}+\sum_{n\geq 1} \left(\sum_{d \mid n \atop (d,\ell)=1} d \right)q^n$$
is the (unique) Eisenstein series of weight $2$ and level $\Gamma_0(\ell)$.  In the rest of this paper, we shall fix a newform $F$ and a maximal ideal $\mathfrak{P}_F \subset \mathcal{O}_F$ above $p$ satisfying the Eisenstein congruence (\ref{eq_Eis_congruence}). Note that there is a surjective ring homomorphism $$\varphi_F : \mathbb{T}^0 \rightarrow \mathcal{O}_F$$ and that we have $\mathfrak{P}_F = \varphi_F(\mathfrak{P})$. Let $\mathfrak{a}_F = \ker(\varphi_F)$ and $I_F = \varphi_F(I)$. Note that we have $\mathfrak{a}_F \subset \mathfrak{P}$ and $I_F \subseteq \mathfrak{P}_F$.

Let $\mathcal{O}_{\mathfrak{P}_F}$ be the $\mathfrak{P}_F$-adic completion of $\mathcal{O}_F$. Note that $\mathcal{O}_{\mathfrak{P}_F}$ is not \emph{a priori} a DVR, as $\mathcal{O}_F$ may be a strict order. The map $\varphi_F$ induces a surjective homomorphism of $\Z_p$-algebras 
\begin{equation}\label{eq_phi_F_P}
\varphi_{F, \mathfrak{P}} : \mathbb{T}_{\mathfrak{P}}^0 \rightarrow \mathcal{O}_{\mathfrak{P}_F}
\end{equation}
sending $I\cdot \mathbb{T}^0_{\mathfrak{P}}$ to $I_F \cdot \mathcal{O}_{\mathfrak{P}_F}$, and therefore induces a surjective group homomorphism 
$$I\cdot \mathbb{T}^0_{\mathfrak{P}}/I^2\cdot \mathbb{T}^0_{\mathfrak{P}} \rightarrow I_F \cdot \mathcal{O}_{\mathfrak{P}_F}/I_F^2 \cdot \mathcal{O}_{\mathfrak{P}_F} \text{.}$$

Note that $$I\cdot \mathbb{T}^0_{\mathfrak{P}}/I^2\cdot \mathbb{T}^0_{\mathfrak{P}} = (I/I^2)\otimes_{\Z} \Z_p \text{,}$$
and that we have a surjective group homomorphism $(I/I^2)\otimes_{\Z} \Z_p \rightarrow \Z/p\Z$ given by composing $\log_{\ell,p}$ with~(\ref{eq_Mazur_I/I^2}). Thus, our choice of $\log_{\ell,p}$ induces a surjective group homomorhism 
\begin{equation}\label{eq_I/I^2_p}
I\cdot \mathbb{T}^0_{\mathfrak{P}}/I^2\cdot \mathbb{T}^0_{\mathfrak{P}} \rightarrow \Z/p\Z \text{.}
\end{equation}

\begin{lem}\label{lemma_I/I^2_F}
The map (\ref{eq_I/I^2_p}) factors through the map $$I\cdot \mathbb{T}^0_{\mathfrak{P}}/I^2\cdot \mathbb{T}^0_{\mathfrak{P}} \rightarrow I_F \cdot \mathcal{O}_{\mathfrak{P}_F}/I_F^2 \cdot \mathcal{O}_{\mathfrak{P}_F} \text{.}$$
Thus, we have a surjective group homomorphism
\begin{equation}\label{eq_I_F/I_F^2}
I_F \cdot \mathcal{O}_{\mathfrak{P}_F}/I_F^2 \cdot \mathcal{O}_{\mathfrak{P}_F} \rightarrow \Z/p\Z
\end{equation}
depending only on the choice of $\log_{\ell, p}$.
\end{lem}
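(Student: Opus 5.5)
The plan is to show that the kernel of the surjection $I\cdot \mathbb{T}^0_{\mathfrak{P}}/I^2\cdot \mathbb{T}^0_{\mathfrak{P}} \rightarrow I_F \cdot \mathcal{O}_{\mathfrak{P}_F}/I_F^2 \cdot \mathcal{O}_{\mathfrak{P}_F}$ is killed by (\ref{eq_I/I^2_p}). This kernel is the image of $\mathfrak{a}_F\mathbb{T}^0_{\mathfrak{P}}\cap I\mathbb{T}^0_{\mathfrak{P}}$ in $I\mathbb{T}^0_{\mathfrak{P}}/I^2\mathbb{T}^0_{\mathfrak{P}}$. Since $\mathfrak{a}_F\subset \mathfrak{P}$ and $\mathfrak{a}_F$ annihilates $F$, the claim amounts to the following: an element $\eta\in I\mathbb{T}^0_{\mathfrak{P}}$ with $\varphi_{F,\mathfrak{P}}(\eta)=0$ has image $0$ in $\Z/p\Z$.

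First I would reduce to a statement about the cyclic ideal. The ideal $I\mathbb{T}^0_{\mathfrak{P}}$ is principal, say generated by $\eta_0$, so $I/I^2\otimes\Z_p$ is cyclic, generated by the class of $\eta_0$, and (\ref{eq_I/I^2_p}) sends $\eta_0$ to a generator of $\Z/p\Z$. Write an element of the kernel as $t\eta_0$ with $t\in\mathbb{T}^0_{\mathfrak{P}}$. We need to show $t\in\mathfrak{P}$, since then $t\eta_0\in \mathfrak{P}\cdot I$, whose image in $\Z/p\Z$ is $0$: the $I\cdot I$ part dies in $I/I^2$, and the $p\cdot I$ part dies because the target has exponent $p$. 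The condition $\varphi_F(t\eta_0)=0$ in the domain $\mathcal{O}_{\mathfrak{P}_F}\subset$ (a product of $p$-adic fields) means $\varphi_F(t)\varphi_F(\eta_0)=0$.

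The key step is then to show $\varphi_F(\eta_0)$ is not a zero divisor, i.e. nonzero in each factor. If $\varphi_F(\eta_0)$ vanished in some factor, the corresponding Galois conjugate of $F$ would satisfy $I\cdot F^\sigma=0$, making $F^\sigma$ Eisenstein, i.e. $a_q=q+1$ for all $q\neq\ell$. This is impossible for a cuspform, by the Ramanujan bound $|a_q|\le 2\sqrt q$. Hence $\varphi_F(t)=0$, so $\varphi_F(t)\in\mathfrak{P}_F$ and $t\in\varphi_{F,\mathfrak{P}}^{-1}(\mathfrak{P}_F\mathcal{O}_{\mathfrak{P}_F})=\mathfrak{P}\mathbb{T}^0_{\mathfrak{P}}$, which is what we wanted.

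The subtle point is the completion: $\mathcal{O}_{\mathfrak{P}_F}$ may be a non-domain order, and I am using that it embeds in $\mathcal{O}_F\otimes\Z_p$ localized. If this were awkward, I would fall back on a cleaner argument. Since $\mathcal{O}_F$ is torsion-free and $\varphi_F(\eta_0)\ne0$ in each embedding, multiplication by $\varphi_F(\eta_0)$ is injective on $\mathcal{O}_{\mathfrak{P}_F}\otimes\Q_p$. This gives the same conclusion, and the choice of $\log_{\ell,p}$ alone determines the resulting map (\ref{eq_I_F/I_F^2}).
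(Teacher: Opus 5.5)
Your proof is correct, but it takes a different route from the paper's.

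\textbf{The paper's route.} It never examines individual kernel elements. Since $A:=I\cdot\mathbb{T}^0_{\mathfrak P}/I^2\cdot\mathbb{T}^0_{\mathfrak P}$ is a cyclic $p$-group, the kernel of the surjection (\ref{eq_I/I^2_p}) is $pA$, the unique maximal proper subgroup. So it suffices that $I_F\mathcal{O}_{\mathfrak{P}_F}/I_F^2\mathcal{O}_{\mathfrak{P}_F}$ is nonzero, because then the kernel of $A\to I_F\mathcal{O}_{\mathfrak{P}_F}/I_F^2\mathcal{O}_{\mathfrak{P}_F}$ is proper and hence lies in $pA$. Nonvanishing follows from Nakayama: $I_F\mathcal{O}_{\mathfrak{P}_F}=I_F^2\mathcal{O}_{\mathfrak{P}_F}\subseteq\mathfrak{P}_F I_F\mathcal{O}_{\mathfrak{P}_F}$ would force $I_F\mathcal{O}_{\mathfrak{P}_F}=0$.

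\textbf{Your route.} You pick a generator $\eta_0$ of the principal ideal $I\cdot\mathbb{T}^0_{\mathfrak P}$. You show that $\varphi_{F,\mathfrak P}(\eta_0)$ is a non-zero-divisor in $\mathcal{O}_{\mathfrak{P}_F}\hookrightarrow\prod_{v\mid\mathfrak{P}_F}K_{F,v}$. This gives the sharper identity
$$\ker(\varphi_{F,\mathfrak P})\cap I\cdot\mathbb{T}^0_{\mathfrak P}=\eta_0\cdot\ker(\varphi_{F,\mathfrak P})=\mathfrak{a}_F I\cdot\mathbb{T}^0_{\mathfrak P}\subseteq\mathfrak{P}I\cdot\mathbb{T}^0_{\mathfrak P},$$
so the kernel of $A\to I_F\mathcal{O}_{\mathfrak{P}_F}/I_F^2\mathcal{O}_{\mathfrak{P}_F}$ lies in $\mathfrak{P}A=pA$.

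\textbf{Comparison.} Both proofs use the same essential input: $I_F\neq 0$, and it survives in the completion. You make this explicit via the Ramanujan bound and the injectivity of $K_F\to K_{F,v}$; the paper just says ``impossible''.

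The paper's argument is shorter. It also avoids reasoning about zero divisors in the possibly non-domain completion $\mathcal{O}_{\mathfrak{P}_F}$; you rightly flag this and handle it through the product of local fields. Your argument gives a more precise description of the kernel, at the cost of that extra bookkeeping.

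One small simplification in your key step: if $\varphi_{F,\mathfrak P}(\eta_0)$ vanishes in some factor $K_{F,v}$, all of $I$ maps to $0$ there because $\eta_0$ generates $I\cdot\mathbb{T}^0_{\mathfrak P}$. Then $I_F=0$ already inside $K_F$, so no Galois conjugate of $F$ is needed.
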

\begin{proof}
Since $I\cdot \mathbb{T}^0_{\mathfrak{P}}/I^2\cdot \mathbb{T}^0_{\mathfrak{P}} $ is a cyclic $p$-group, it suffices to prove that $I_F \cdot \mathcal{O}_{\mathfrak{P}_F}/I_F^2 \cdot \mathcal{O}_{\mathfrak{P}_F} $ is non-trivial. Otherwise, we would have $I_F\cdot \mathcal{O}_{\mathfrak{P}_F} = I_F^2\cdot \mathcal{O}_{\mathfrak{P}_F}$, so $I_F\cdot \mathcal{O}_{\mathfrak{P}_F} = \mathfrak{P}_F\cdot I_F \cdot \mathcal{O}_{\mathfrak{P}_F}$, and by Nakayama's lemma $I_F\cdot \mathcal{O}_{\mathfrak{P}_F}=0$ which is impossible.
\end{proof}

We shall need the following result in order to prove our congruence formula for $L$-values. We use the well-known fact that the $L$-value $L(F,1) = \int_{\{0,\infty\}} 2i\pi F(z)dz$ is non-zero (this is a consequence of \cite[Theorem II.18.10]{Mazur_Eisenstein}).

\begin{prop}\label{prop_period_F}
Consider the group homomorphism $p_F : H_+ \rightarrow \C$ defined by
$$p_F(x) = \frac{\int_{x}2i\pi F(z)dz}{L(F,1)} \text{.}$$
(This is well-defined since, as recalled above, we have $L(F,1) \neq 0$.)
\begin{enumerate}[label=(\roman*)]
\item\label{prop_period_F_i} We have $p_F(H_+) \subseteq K_F$, the latter being naturally a subfield of the fraction field of $\mathcal{O}_{\mathfrak{P}_F}$. Furthermore, we actually have $p_F(H_+) \subseteq I_F\cdot \mathcal{O}_{\mathfrak{P}_F}$. In particular, one gets a natural map
$$H_+ \rightarrow I_F\cdot \mathcal{O}_{\mathfrak{P}_F}/I_F^2\cdot \mathcal{O}_{\mathfrak{P}_F}$$
whose composition with the map (\ref{eq_I_F/I_F^2}) of Lemma \ref{lemma_I/I^2_F} yields a surjective group homomorphism
\begin{equation}\label{eq_alpha_H^+}
\alpha_F : H_+ \rightarrow \Z/p\Z \text{,}
\end{equation}
depending only on the choice of $\log_{\ell, p}$ and $F$, and whose kernel contains $I$ and $\mathfrak{a}_F$.
\item\label{prop_period_F_ii} For any $z_0$ in the completed upper-half plane and $\gamma = \begin{pmatrix} * & * \\ * & d \end{pmatrix} \in \Gamma_0(\ell)$, we have 
$$\alpha_F((1+c)\cdot \{z_0, \gamma\cdot z_0\}) = \log_{\ell, p}(d) \text{.}$$
In particular, $\alpha_F$ depends only on the choice of $\log_{\ell, p}$, and not on the choice of $F$ satisfying the Eisenstein congruence (\ref{eq_Eis_congruence}).
\end{enumerate}
\end{prop}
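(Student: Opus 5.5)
The plan is to express $p_F$ through Mazur's winding isomorphism $e_+$, and then read off its values via the isomorphism (\ref{eq_Mazur_H/I}).

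\textbf{Step 1: rationality and the $I$-adic image.} Set $H_+' := (H_+)_{\mathfrak{P}} = H_+\otimes_{\mathbb{T}^0}\mathbb{T}^0_{\mathfrak{P}}$. By \cite[Theorem II.18.10]{Mazur_Eisenstein}, localized at $\mathfrak{P}$, every $x\in H_+$ has an image in $H_+'$ of the form $\eta_x\cdot\{0,\infty\}$ for a unique $\eta_x\in I\cdot\mathbb{T}^0_{\mathfrak{P}}$.

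I would first prove $p_F(H_+)\subseteq K_F$. The map $x\mapsto \int_x 2i\pi F(z)dz$ satisfies $\int_{Tx}\omega_F = \varphi_F(T)\int_x\omega_F$, since the Hecke operators are self-adjoint for integration against the eigenform $F$. Hence $\int_x\omega_F = \varphi_F(\eta_x)L(F,1)$ whenever $x=\eta_x\{0,\infty\}$ with $\eta_x\in\mathbb{T}^0$. In general $\{0,\infty\}$ is only a rational class, and $e_+$ is only an isomorphism after completion. So I would use that $H_+\otimes\Q \cong \mathbb{T}^0\otimes\Q\cdot\{0,\infty\}$; this holds because $\{0,\infty\}$ generates $H_1(X_0(\ell),\Q)^+$ as a Hecke module (Manin--Drinfeld, and nonvanishing of $L(f,1)$ for Eisenstein-congruent components, from Mazur). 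This gives $p_F(x)=\varphi_F(\eta_x)\in K_F$ with $\eta_x\in\mathbb{T}^0\otimes\Q$.

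After completing at $\mathfrak{P}$ we get $\eta_x\in I\mathbb{T}^0_{\mathfrak{P}}$. Therefore $p_F(x)=\varphi_{F,\mathfrak{P}}(\eta_x)\in I_F\mathcal{O}_{\mathfrak{P}_F}$, using that (\ref{eq_phi_F_P}) sends $I\mathbb{T}^0_\mathfrak{P}$ into $I_F\mathcal{O}_{\mathfrak{P}_F}$.

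\textbf{Step 2: the map $\alpha_F$.} Composing Step 1 with (\ref{eq_I_F/I_F^2}) defines $\alpha_F$. By Lemma \ref{lemma_I/I^2_F} and the construction, $\alpha_F(x)$ equals the image of $\eta_x$ under (\ref{eq_I/I^2_p}).

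For the kernel, take $T\in I$. Then $\eta_{Tx}=T\eta_x\in I^2$, so $\alpha_F(Tx)=0$. If $T\in\mathfrak{a}_F$, then $\varphi_F(T)=0$ gives $p_F(Tx)=0$.

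\textbf{Step 3: part (ii) and surjectivity.} The image of $\eta_x$ under (\ref{eq_I/I^2_p}) is $\log_{\ell,p}$ applied to (\ref{eq_Mazur_I/I^2})$(\eta_x)$. By definition this is (\ref{eq_Mazur_H/I}) applied to $e_+(\eta_x)=x$ modulo $I H_+$. Taking $x=(1+c)\{z_0,\gamma z_0\}$, this class is $d$, so $\alpha_F(x)=\log_{\ell,p}(d)$. This is independent of $F$.

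Surjectivity follows because $\log_{\ell,p}$ is surjective, and every $d\in(\Z/\ell\Z)^\times$ occurs as the lower-right entry of some $\gamma\in\Gamma_0(\ell)$.

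\textbf{Main obstacle.} The delicate point is Step 1: making precise that $p_F$ is computed by $\varphi_F(\eta_x)$, where $\eta_x$ comes from the \emph{completed} winding isomorphism. This requires checking that $\int_{-}\omega_F/L(F,1)$ extends $\mathbb{T}^0_\mathfrak{P}$-linearly to $H_+'\to\mathcal{O}_{\mathfrak{P}_F}$ (after embedding $K_F$ into $\operatorname{Frac}\mathcal{O}_{\mathfrak{P}_F}$), and that this extension agrees with the rational computation. I would handle it by showing that $\psi: H_+\otimes\Q\to K_F$, $\eta\{0,\infty\}\mapsto\varphi_F(\eta)$, is well defined, using the annihilator of $\{0,\infty\}$ and the fact that $L(F,1)\ne0$. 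The comparison is then formal, by $\mathfrak{P}$-adic continuity of both sides.
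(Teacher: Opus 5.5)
Your strategy is the paper's. You write the image of $x$ in $(H_+)_{\mathfrak P}$ as $\eta_x e$ with $\eta_x\in I\mathbb{T}^0_{\mathfrak P}$, using Mazur's completed winding isomorphism, push it through $\varphi_{F,\mathfrak P}$, and read (ii) off (\ref{eq_Mazur_H/I}). Your Steps 2 and 3 (kernel, part (ii), surjectivity) are fine.

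The genuine error is in Step 1: the assertion $H_+\otimes\Q=(\mathbb{T}^0\otimes\Q)\cdot\{0,\infty\}$ is false in general. Since $H_+\otimes\Q$ is free of rank one over $\mathbb{T}^0\otimes\Q=\prod_{[f]}K_f$, the winding element $e$ generates it if and only if $L(f,1)\neq 0$ for \emph{every} newform $f$ of level $\ell$. Mazur's nonvanishing only covers the components congruent to $E_{2,\ell}$, which amounts to generation only after completing at $\mathfrak P$. For example, take $\ell=43$, $p=7$. The newform attached to the rank-one curve 43a has root number $-1$, so $L(f,1)=0$, and $(\mathbb{T}^0\otimes\Q)e$ misses the $f$-isotypic line. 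Your map $\psi$ is therefore defined only on a proper subspace. For a general $x\in H_+$ there is no rational $\eta_x$ with $x=\eta_x e$, so your argument does not establish $p_F(H_+)\subseteq K_F$.

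The fix is to work only on the $F$-component. The map $x\mapsto\int_x 2i\pi F(z)dz$ is $\mathbb{T}^0$-linear, with $\mathbb{T}^0$ acting on $\C$ through $\varphi_F$. Hence it factors $K_F$-linearly through $H_+\otimes_{\mathbb{T}^0,\varphi_F}K_F$, which is one-dimensional over $K_F$ by multiplicity one. This line is spanned by the image of $e$, because $\int_e 2i\pi F(z)dz=L(F,1)\neq 0$. Writing the image of $x$ as $\lambda\cdot e$ gives $p_F(x)=\lambda\in K_F$. The paper simply cites this rationality from Pasol--Popa.

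With rationality in hand, your ``main obstacle'' disappears, and you need neither a rational $\eta_x$ nor a continuity argument. As in the paper, extend $p_F$ to $\tilde p_F:\tilde H_+\to K_F$ on the Manin--Drinfeld lattice containing $e$, let $\tilde M$ be its finitely generated image, and tensor with $\mathbb{T}^0_{\mathfrak P}$. The resulting map $(\tilde H_+)_{\mathfrak P}=\mathbb{T}^0_{\mathfrak P}e\to \tilde M\otimes_{\mathcal{O}_F}\mathcal{O}_{\mathfrak P_F}\hookrightarrow K_F\otimes_{\mathcal{O}_F}\mathcal{O}_{\mathfrak P_F}$ sends $x=\eta_x e$ both to $p_F(x)$ and to $\varphi_{F,\mathfrak P}(\eta_x)\tilde p_F(e)=\varphi_{F,\mathfrak P}(\eta_x)$.
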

\begin{proof}
It will be convenient to extend slightly the definition of $p_F(x)$ to the \emph{Manin-Drinfeld modification} $\tilde{H}_+$ of $H_+$. Consider the relative homology group $H_1(X_0(\ell), \cusps, \Z)$, where $\cusps$ is the finite set of cusps of $X_0(\ell)$, namely the cusps $0$ and $\infty$ (since $\ell$ is prime). Let $\mathbb{T}$ be the Hecke algebra (over $\Z$) acting faithfully on $H_1(X_0(\ell), \cusps, \Z)$. Note that $\mathbb{T}^0$ is a quotient of $\mathbb{T}$, and that we have a $\mathbb{T}$-equivariant short exact sequence
$$0 \rightarrow H_1(X_0(\ell), \Z) \rightarrow H_1(X_0(\ell), \cusps, \Z) \rightarrow \Z[\cusps]^0 \rightarrow 0$$
where $\Z[\cusps]^0$ is the group of degree zero divisors supported on the set of $\cusps$, which is just $\Z$ since there are two cusps. The action of $\mathbb{T}$ on $\Z[\cusps]^0$ factors through the Eisenstein ideal of $\mathbb{T}$. Furthermore, there is a canonical $\mathbb{T}$-equivariant projection of $H_1(X_0(\ell), \cusps, \Z)$ onto its subspace $H_1(X_0(\ell), \Z)$ after inverting the numerator $n$ of $\frac{\ell-1}{12}$. This projection $p : H_1(X_0(\ell), \cusps, \Z) \rightarrow H_1(X_0(\ell), \Z[\frac{1}{n}])$, called the \emph{Manin--Drinfeld retraction}, is characterized by sending the modular symbol $\{0,\infty\}$ to the unique element 
$$e \in H_1(X_0(\ell), \Z[\frac{1}{n}])$$ (called the \emph{winding element}, \cf \cite[II.18 Definition p. 136]{Mazur_Eisenstein}) satisfying, for all cusp form $f \in S_2(\Gamma_0(\ell))$:
$$\int_e f(z)dz = \int_{\{0, \infty\}} f(z)dz \text{.}$$
We denote by $\tilde{H}$ the image of $H_1(X_0(\ell), \cusps, \Z)$ in $H_1(X_0(\ell), \Z[\frac{1}{n}])$ by the Manin--Drinfeld retraction $p$, and by $\tilde{H}_+$ the subspace of $\tilde{H}$ fixed by the complex conjugation. Note that we have an inclusion $H_+ \subset \tilde{H}_+$ and a canonical group isomorphism $\tilde{H}_+/H_+ \xrightarrow{\sim} \Z/n\Z$ sending the image of $e$ to $1$. Furthermore, $\mathbb{T}^0$ acts on $\tilde{H}_+$ and we have $I\cdot \tilde{H}_+ \subset H_+$ (\cf \cite[Lemma II.18.6]{Mazur_Eisenstein}). 

Our map $p_F : H_+ \rightarrow \C$ extends in a natural way to a map $\tilde{p}_F : \tilde{H}_+\rightarrow \C$ given by the same formula, \ie 
$$\tilde{p}_F(x) = \frac{\int_{x}2i\pi F(z)dz}{L(F,1)} \text{.}$$
Note that, by construction, we have $\tilde{p}_F(e)=1$. We are now ready to prove the proposition.

The fact that $\tilde{p}_F$ takes values in the Hecke field $K_F$ of $F$ is well-known, \cf \eg \cite[Proposition 5.11]{Pasol_Popa}. Let $\tilde{M}:=\tilde{p}_F(\tilde{H}_+)$ and $M:=p_F(H_+)$. We have $M \subset \tilde{M}\subset K_F$. If $m$ is a positive integer, then we have, for all $x \in \tilde{H}_+$: 
$$\tilde{p}_F(T_mx) = a_m(F)\cdot \tilde{p}_F(x) \text{.}$$
This proves that the kernel of $\tilde{p}_F$ (and of $p_F$) contain $\mathfrak{a}_F$, and furthermore that both $\tilde{M}$ and $M$ are finitely generated sub-$\mathcal{O}_F$-modules of $K_F$. Note that $\tilde{M}$ contains $\mathcal{O}_F$ since $\tilde{p}_F(e)=1$.

Denote by $\tilde{M}_{\mathfrak{P}_F}$ (resp. $M_{\mathfrak{P}_F}$) the completion of $\tilde{M}$ (resp. $M$) at $\mathfrak{P}_F$. We have a natural inclusion $\mathcal{O}_{\mathfrak{P}_F} \subseteq \tilde{M}_{\mathfrak{P}_F}$. The map $\tilde{p}_F : \tilde{H}_+ \rightarrow \tilde{M}$ induces, after completion at $\mathfrak{P}$ on the left and at $\mathfrak{P}_F$ on the right, a map
$$(\tilde{H}_+)_{\mathfrak{P}} \rightarrow M_{\mathfrak{P}_F} \text{.}$$

Since the winding homomorphism (\ref{eq_winding_hom}) is an isomorphism after completion at $\mathfrak{P}$, we easily see that we have $(\tilde{H}_+)_{\mathfrak{P}} = \mathbb{T}_{\mathfrak{P}}^0\cdot e$ and $(H_+)_{\mathfrak{P}} = I\cdot \mathbb{T}_{\mathfrak{P}}^0\cdot e$. Thus, we have $\tilde{M}_{\mathfrak{P}_F} = \mathcal{O}_{\mathfrak{P}_F}$ and $M_{\mathfrak{P}_F} = I_F\cdot \mathcal{O}_{\mathfrak{P}_F}$. This proves point \ref{prop_period_F_i}. 

Let us now prove point \ref{prop_period_F_ii}. Let $\gamma = \begin{pmatrix} * & * \\ * & d \end{pmatrix} \in \Gamma_0(\ell)$ and $z_0$ in the completed upper-half plane. Let $x$ be the image of $(1+c)\cdot \{z_0, \gamma\cdot z_0\}$ in $(H_+)_{\mathfrak{P}}$. 
By \ref{prop_period_F_i}, we have $p_F(x) \in I_F\cdot \mathcal{O}_{\mathfrak{P}_F}$, and more precisely we have $p_F(x) = \varphi_{F, \mathfrak{P}}(\eta)$ where $\eta \in I\cdot \mathbb{T}_{\mathfrak{P}}^0$ is such that $x = \eta\cdot e$ and $\varphi_{F, \mathfrak{P}}$ is defined in (\ref{eq_phi_F_P}). By construction, $\alpha_F((1+c)\cdot \{z_0, \gamma\cdot z_0\}) \in \Z/p\Z$ is the image of $\eta$ in $\Z/p\Z$ via (\ref{eq_I/I^2_p}), which as we recalled at the end of \S \ref{section_review_Eisenstein} is equal to $\log_{\ell,p}(d)$.
\end{proof}

\subsection{Popa's formula and congruence for the $L$-value of real quadratic twists}

In this paragraph, we recall some consequences of a formula of Waldspurger \cite{Waldspurger_twist} for the twisted $L$-value $L(F,\chi_{\Delta},1)$, where $F \in S_2(\Gamma_0(\ell))$ is a newform and $\Delta \in \mathcal{D}_{\ell}$ (\ie such that $\chi_{\Delta}(\ell)=1$).

The explicit form of the formula we shall need is due to Popa \cite{Popa_L}, and is expressed in terms of \emph{Heegner cycles}. The main result of this paragraph is Proposition \ref{prop_main_congruence_L} below, which as far as we know is new, but let us emphasize that it is very similar to results obtained in \cite[Proposition 5.7]{DHRV} and by the first author in \cite[\S 3.3 (26)]{Lecouturier_HV}, and indeed our proof relies on similar techniques (namely Heegner cycles). A very similar result was also obtained in \cite{Lecouturier_Wang} by completely different methods. 

We first recall Popa's formula, following \cite[\S 6]{Popa_L}. Let us fix a choice of a square root $\delta_{\ell}$ of $\Delta$ modulo $4\ell$, \ie $\delta_{\ell} \in \Z$ (well-defined modulo $2\ell$) such that
$$\delta_{\ell}^2 \equiv \Delta \text{ (modulo }4\ell\text{).}$$
This choice determines an ideal $\mathfrak{l}$ of $\mathcal{O}_K$ above $\ell$, characterized by the property
\begin{equation}\label{eq_def_mathfrak_l}
\sqrt{\Delta} + \delta_{\ell} \in \mathfrak{l} \text{.}
\end{equation}

Let us recall briefly the notion of Heegner cycles (\cf \cite[\S 6.2 and 6.3]{Popa_L}). Recall that a $\Q$-algebra embedding $\Psi : K \rightarrow M_2(\Q)$ given by 
\begin{equation}\label{eq_def_opt_emb}
\Psi(\sqrt{\Delta}) = \begin{pmatrix} a & b \\ c & -a \end{pmatrix}
\end{equation}
is called \emph{optimal of level $\ell$} if $\Psi(K) \cap M_0(\ell) = \Psi(\mathcal{O}_K)$ and \emph{oriented} (with respect to our choice of $\delta_{\ell}$) if 
\begin{equation}\label{eq_congruence_a}
a \equiv \delta_{\ell} \text{ (modulo }2\ell\text{).}
\end{equation}
Here, we have denoted by $M_0(\ell)$ the Eichler order of level $\ell$ in $M_2(\Z)$ consisting of matrices which are upper-triangular modulo $\ell$. 

The group $\Gamma_0(\ell)$ acts (by conjugation) on the right on the set $\mathcal{E}_{\ell}$ of oriented optimal embeddings of level~$\ell$, and we have a natural bijection (\cf \cite[Proposition 6.2.1]{Popa_L})
\begin{equation}\label{eq_bijection_opt_emb_cl}
\mathcal{E}_{\ell}/\Gamma_0(\ell) \xrightarrow{\sim} \Cl^+(K) \text{.}
\end{equation}
In particular, the set $\mathcal{E}_{\ell}/\Gamma_0(\ell)$ is finite of cardinality $h_K^+$. 
For $\Psi \in \mathcal{E}_{\ell}$, let $$M_{\Psi} := \Psi(\epsilon_K^+) \in \Gamma_0(\ell) \text{.}$$
By construction of $\Psi$, we have $M_{\Psi} = \begin{pmatrix} * & * \\ * & d_{\Psi} \end{pmatrix}$ with $d_{\Psi}=m-na$ where $a$ is as in (\ref{eq_def_opt_emb}) and $m,n \in \frac{1}{2} \Z$ are such that $\epsilon_K^+ = m+n\sqrt{\Delta}$. It follows from (\ref{eq_congruence_a}) and our choice of $\mathfrak{l}$ in (\ref{eq_def_mathfrak_l}) that we have
\begin{equation}\label{eq_congruence_Heegner_cycle}
d_{\Psi} \equiv \epsilon_K^+ \text{ (modulo }\mathfrak{l}\text{).}
\end{equation}

For any $z_0$ in the completed upper-half plane, we thus get a homology class (independent of $z_0$), called a \emph{Heegner cycle}:
$$\gamma_{\Psi}:=\{z_0, M_{\Psi}z_0\} \in H_1(X_0(\ell), \Z) \text{.}$$

As noted in \cite[p.860]{Popa_L}, there is an involution $\Psi \mapsto \Psi^*$ on oriented optimal embeddings of level $\ell$ which preserves $\Gamma_0(\ell)$-equivalence and with the property that $\gamma_{\Psi^*} = c\cdot \gamma_{\Psi}$ (where $c$ is, as above, the complex conjugation acting on $H_1(X_0(\ell), \Z)$). We conclude that  $\sum_{\Psi \in \mathcal{E}_{\ell}/\Gamma_0(\ell)} \gamma_{\Psi}$ is fixed by $c$, \ie we have
\begin{equation}\label{eq_complex_conjug_Heegner}
2\cdot \sum_{\Psi \in \mathcal{E}_{\ell}/\Gamma_0(\ell)} \gamma_{\Psi} =  \sum_{\Psi \in \mathcal{E}_{\ell}/\Gamma_0(\ell)} (1+c)\cdot\gamma_{\Psi} \text{.}
\end{equation}

Let us now state Popa's formula. Let $F\in S_2(\Gamma_0(\ell))$ be a newform. As usual, denote by $L(F/K, s)$ the $L$-function of $F$ based change to $K$. We have the well-known factorization:
\begin{equation}\label{eq_factorization_L}
L(F/K,s) = L(F,s)L(F, \chi_{\Delta}, s)
\end{equation}
We then have, by \cite[Theorem 6.3.1]{Popa_L}:
$$
L(F/K,1) = \frac{1}{\sqrt{\Delta}}\abs{\sum_{\Psi \in \mathcal{E}_{\ell}/\Gamma_0(\ell)} \int_{\gamma_{\Psi}} 2i\pi F(z)dz }^2 \text{.}
$$

Using (\ref{eq_complex_conjug_Heegner}) and (\ref{eq_factorization_L}), if $L(F,1)\neq 0$ then one can rewrite Popa's formula as
\begin{equation}\label{eq_Popa_formula}
\sqrt{\Delta}\cdot \frac{L(F, \chi_{\Delta},1)}{L(F,1)} =  \left(\frac{1}{2}\sum_{\Psi \in \mathcal{E}_{\ell}/\Gamma_0(\ell)} \frac{\int_{(1+c)\cdot \gamma_{\Psi}} 2i\pi F(z)dz}{L(F,1)} \right)^2 \text{.}
\end{equation}

Note that we have removed the absolute values, because the complex number
$$\frac{\int_{(1+c)\cdot \gamma_{\Psi}} 2i\pi F(z)dz}{L(F,1)} $$
is real (as $F$ has trivial Nebentype). A direct consequence of Proposition \ref{prop_period_F} and (\ref{eq_Popa_formula}), (\ref{eq_congruence_Heegner_cycle}) and (\ref{eq_bijection_opt_emb_cl})  is the following congruence formula.

\begin{prop}\label{prop_main_congruence_L} 
Let $F \in S_2(\Gamma_0(\ell))$ be a newform satisfying the Eisenstein congruence (\ref{eq_Eis_congruence}). Then for all fundamental discriminant $\Delta$ of a real quadratic field in which $\ell$ splits, we have:
$$\sqrt{\Delta}\cdot \frac{L(F, \chi_{\Delta},1)}{L(F,1)} = L_F(\Delta)^2$$
for some $L_F(\Delta) \in K_F$ (the Hecke field of $F$), uniquely defined up to sign, such that 
\begin{equation}\label{eq_trivial_divisibility}
L_F(\Delta) \in I_F\cdot \mathcal{O}_{\mathfrak{P}_F}
\end{equation}
and the image $\widetilde{L_F(\Delta)}$ of $L_F(\Delta)$ in $\Z/p\Z$ via the map (\ref{eq_I_F/I_F^2}) satisfies
$$
\widetilde{L_F(\Delta)} =  \pm h_{\Delta} \cdot \log_{\l, p}(\epsilon_{\Delta})
$$
for any prime $\mathfrak{l}$ above $\ell$ in $K=\Q(\sqrt{\Delta})$. 
\end{prop}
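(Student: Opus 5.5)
We take the explicit choice
$$L_F(\Delta) := \frac{1}{2}\sum_{\Psi \in \mathcal{E}_{\ell}/\Gamma_0(\ell)} p_F\big((1+c)\cdot \gamma_{\Psi}\big),$$
where $p_F$ is the period map of Proposition \ref{prop_period_F}. By (\ref{eq_Popa_formula}) its square is $\sqrt{\Delta}\, L(F,\chi_\Delta,1)/L(F,1)$; here we use $L(F,1)\neq 0$, recalled before Proposition \ref{prop_period_F}. Any other element of $K_F$ with the same square differs from it by a sign, which gives uniqueness up to sign.

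\emph{Integrality.} Each $(1+c)\gamma_\Psi$ lies in $H_+$, because $\gamma_\Psi \in H_1(X_0(\ell),\Z)$ and $c$ preserves this group. Proposition \ref{prop_period_F}\ref{prop_period_F_i} then places each summand in $I_F\cdot\mathcal{O}_{\mathfrak{P}_F}\cap K_F$. Since $p\geq 5$, the factor $\frac12$ is a unit in $\mathcal{O}_{\mathfrak{P}_F}$, so $L_F(\Delta)\in I_F\cdot \mathcal{O}_{\mathfrak{P}_F}$, which is (\ref{eq_trivial_divisibility}).

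\emph{Reduction.} The map (\ref{eq_I_F/I_F^2}), composed with $p_F$, is $\alpha_F$, and $\alpha_F$ is additive. Reducing the sum therefore gives
$$\widetilde{L_F(\Delta)} = \tfrac12\sum_\Psi \alpha_F\big((1+c)\{z_0, M_\Psi z_0\}\big) = \tfrac12\sum_\Psi \log_{\ell,p}(d_\Psi),$$
by Proposition \ref{prop_period_F}\ref{prop_period_F_ii}. By (\ref{eq_congruence_Heegner_cycle}), $d_\Psi \equiv \epsilon^+_\Delta \pmod{\mathfrak l}$ for every $\Psi$, where $\mathfrak l$ is the prime attached to $\delta_\ell$ in (\ref{eq_def_mathfrak_l}). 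Hence each summand equals $\log_{\mathfrak l,p}(\epsilon_\Delta^+)$. By (\ref{eq_bijection_opt_emb_cl}) there are $h^+(\Delta)$ summands, so
$$\widetilde{L_F(\Delta)} = \tfrac12 h^+(\Delta)\log_{\mathfrak l,p}(\epsilon^+_\Delta) = \tfrac12\log_{\mathfrak l,p}\big((\epsilon_\Delta^+)^{h^+(\Delta)}\big)= \tfrac12\log_{\mathfrak l,p}(\epsilon_\Delta^{2h(\Delta)}) = h(\Delta)\log_{\mathfrak l,p}(\epsilon_\Delta),$$
using the identity $(\epsilon^+_\Delta)^{h^+}=\epsilon_\Delta^{2h}$ noted at the start of \S\ref{section_Eisenstein_congruences}.

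\emph{Independence of $\mathfrak l$.} It remains to pass from the specific $\mathfrak l$ to an arbitrary prime $\mathfrak l'$ above $\ell$; this is where the $\pm$ sign comes from. Let $\mathfrak l'$ be the conjugate of $\mathfrak l$. Since $\epsilon_\Delta\bar\epsilon_\Delta = N(\epsilon_\Delta)=\pm1$ and $-1$ lies in the kernel of $\log_{\ell,p}$ (as $p$ is odd), we get
$$\log_{\mathfrak l',p}(\epsilon_\Delta) = \log_{\mathfrak l,p}(\bar\epsilon_\Delta) = -\log_{\mathfrak l,p}(\epsilon_\Delta).$$
Changing the choice of $\delta_\ell$ to $-\delta_\ell$ similarly just swaps the two primes, so the formula $\widetilde{L_F(\Delta)}=\pm h(\Delta)\log_{\mathfrak l,p}(\epsilon_\Delta)$ holds for any $\mathfrak l$ above $\ell$.

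\emph{Main obstacle.} The delicate point is the bookkeeping in the integrality and reduction steps. One must check that the factor $\frac12$ and the passage from $K_F$ into $\mathcal{O}_{\mathfrak{P}_F}$ are compatible with the map (\ref{eq_I_F/I_F^2}), and that every $\Gamma_0(\ell)$-class contributes the same residue $d_\Psi$ modulo $\mathfrak l$ for one fixed $\mathfrak l$, namely the one determined by the orientation. Both checks are supplied by (\ref{eq_congruence_a}) and (\ref{eq_congruence_Heegner_cycle}).
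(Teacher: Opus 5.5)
Your proposal is correct and follows essentially the same route as the paper. The paper states this proposition as a direct consequence of Proposition \ref{prop_period_F}, Popa's formula (\ref{eq_Popa_formula}), the congruence (\ref{eq_congruence_Heegner_cycle}) and the count (\ref{eq_bijection_opt_emb_cl}); your write-up makes that chain explicit, and you also supply details the paper leaves implicit, namely the identity $(\epsilon^+_\Delta)^{h^+(\Delta)}=\epsilon_\Delta^{2h(\Delta)}$ and the sign change $\log_{\mathfrak{l}',p}(\epsilon_\Delta)=-\log_{\mathfrak{l},p}(\epsilon_\Delta)$ under swapping the two primes above $\ell$.
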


\section{Half-integral weight modular forms}\label{section_half_int_wt}

\subsection*{Notation}
We keep the same notation as in section \ref{section_Eisenstein_congruences}. In particular, we still assume $p\geq 5$. In all this section, we let
$$\chi : (\Z/\ell\Z)^{\times}\rightarrow 
\C^{\times}$$ be any \emph{odd} Dirichlet character, \ie such that $\chi(-1)=-1$. We let $\chi'$ be the unique \emph{even} Dirichlet character of level $4\ell^2$ whose restriction to $(\Z/\ell^2\Z)^{\times}$ is induced by $\chi$. Explicitly, $\chi' : (\Z/4\ell^2\Z)^{\times}\rightarrow \C^{\times}$ is given by
\begin{equation}\label{eq_def_chi}
\chi'(x) = (-1)^{\frac{x-1}{2}}\cdot \chi(x) \text{.}
\end{equation}

Let $$R = \Z[\chi]=\Z[\chi']$$ be the subring of $\C$ in which $\chi$ takes its values. Similarly, we let $\Q(\chi)\subset \C$ be the fraction field of $R$. 

Note that if $\ell \equiv 3 \text{ (modulo } 4\text{)}$ (\eg if $\ell=11$, as will be the case in \S \ref{section_cor_BSD}), one may choose for instance $\chi(x) = \left(\dfrac{x}{\ell}\right)$, in which case $R = \Z$, which would simplify the notation in our arguments.

\subsection{The generalized Shimura lifting of Baruch--Mao}
The goal of this section is to combine the congruence of Proposition \ref{prop_main_congruence_L} with a generalization of the Kohnen--Zagier formula for half-integral weights modular forms due to \cite{Mao} in order to construct a modulo $p$ weight $\frac{3}{2}$ modular form whose $\Delta$th coefficient controls the vanishing of $h(\Delta) \cdot \log_{\l, p}(\epsilon_{\Delta})$. Our main result is stated in Theorem \ref{prop_main_wt_3/2} below. 

There is a well-defined notion of half-integral weight modular form with coefficients in any (commutative) ring $A$ in which the level is invertible, \cf \cite{Ramsey}. Thus, if $M \in \mathbf{N}$ and $A$ is a commutative ring with $4M \in A^{\times}$, one can consider the space $S_{\frac{3}{2}}(\Gamma_1(4M), \psi)_{A}$ of weight $\frac{3}{2}$ cuspforms with coefficients in $A$ of level $\Gamma_1(4M)$ and character $\psi : (\Z/4M\Z)^{\times} \rightarrow A^{\times}$. If we do not indicate the base ring $A$ in the notation, it means we take $A=\C$.

Let $F \in S_2(\Gamma_0(\ell))$ be a newform. Let $w_{\ell}$ be the Atkin--Lehner involution. We assume that 
$$w_{\ell}F=-F \text{,}$$
\ie that the sign of the functional equation of $L(F,s)$ is $1$. 
Recall that, as $\ell$ is prime, we have $w_{\ell} = -U_{\ell}$ on $S_2(\Gamma_0(\ell))$. Therefore $U_{\ell}$ is an involution, and our assumption amounts to $U_{\ell}F=F$. 

\begin{rem}
This assumption holds in particular if $F$ satisfies the Eisenstein congruence (\ref{eq_Eis_congruence}). Indeed, in this case we have $U_{\ell}F \equiv F \text{ (modulo }\mathfrak{P}_F \text{)}$ and, since $p>2$, we conclude that $U_{\ell}F = F$ (this is well-known, even for $p=2$, and follows from \cite[Proposition II.17.10]{Mazur_Eisenstein}) As recalled before, for such a residually Eisenstein $F$, we actually have the stronger fact that $L(F,1) \neq 0$.
\end{rem}

We now use \cite[Theorem 10.1]{Mao} to construct a weight $\frac{3}{2}$ Hecke eigenform whose Fourier coefficients are related to the twisted $L$-values $L(F, \chi_{\Delta},1)$. The reader may also refer to \cite[Theorem 1.4]{Mao_generalized_Shimura} (which is more general and deals with odd but not necessarily prime levels). 

By applying \cite[Theorem 10.1]{Mao} with $N=N'=\ell$, $S=\{\ell\}$, we get that here exists a non-zero 
$$f_{\chi} = \sum_{n\geq 1} a_n(f_{\chi})q^n \in S_{\frac{3}{2}}(\Gamma_1(4\ell^2), \chi') \text{,}$$ unique up to non-zero scalar, satisfying the following properties:
\begin{enumerate}[label=(\roman*)]
\item\label{Mao_i} $f_{\chi}$ is a Shimura lift of $F$. This condition is a generalization of Shimura's original notion of lift \cite{Shimura_annals}. It is expressed explicitly in a classical language in \cite[\S 1.1]{Mao_generalized_Shimura}. This means that for all prime $q \nmid 2\ell$, we have
$$
T_{q^2}f_{\chi} =  \chi(q)\cdot a_q(F)\cdot f_{\chi}\text{.}
$$
(Recall (\cf \eg \cite[\S 3]{Bruinier}) that 
\begin{equation}\label{eq_def_T_q^2}
a_n(T_{q^2}f_{\chi}) = a_{nq^2}(f_{\chi})+\chi(q)\left(\dfrac{n}{q}\right)a_n(f_{\chi})+q\cdot\chi(q^2)\cdot a_{\frac{n}{q^2}}(f_{\chi}) \text{ .)}
\end{equation}
\item\label{Mao_ii}  The form $f_{\chi}$ is in  the \emph{Kohnen space} (\cf \cite[\S 9.5]{Mao}), \ie if $n \equiv 2,3 \text{ (modulo }4\text{)}$, then $a_n(f_{\chi}) = 0$.
\item\label{Mao_iii}  We have $a_{\Delta}(f_{\chi})=0$ if $\Delta$ is a fundamental discriminant of a real quadratic field in which $\ell$ does not split, \ie $\Delta \not\in \mathcal{D}_{\ell}$. (Here we use the assumption $w_{\ell}F = -F$.)
\end{enumerate}
Furthermore, for all $\Delta \in \mathcal{D}_{\ell} \cup \{1\}$ (\ie such that $\ell$ splits in $\Q(\sqrt{\Delta})$), we have
$$
\abs{a_{\Delta}(f_{\chi})}^2 = C\cdot \sqrt{\Delta}\cdot L(F, \chi_{\Delta},1)
$$
for some constant $C \in \C^{\times}$ independent of $\Delta$ (but depending on $f_{\chi}$). 

From now on, assume furthermore that we have $L(F,1)\neq 0$. We conclude that for all $\Delta \in \mathcal{D}_{\ell}$, we have
$$
\abs{\frac{a_{\Delta}(f_{\chi})}{a_1(f_{\chi})}}^2 = \sqrt{\Delta}\cdot \frac{L(F, \chi_{\Delta},1)}{L(F,1)}\text{.}
$$
We can thus normalize $f_{\chi}$ such that 
$$a_1(f_{\chi})=1 \text{,}$$ in which case we get, for all $\Delta \in \mathcal{D}_{\ell}$:
\begin{equation}\label{eq_key_eq_a_Delta/a_1}
\abs{a_{\Delta}(f_{\chi})}^2 = \sqrt{\Delta}\cdot \frac{L(F, \chi_{\Delta},1)}{L(F,1)}\text{.}
\end{equation}

Let us rewrite this last equality without using absolute values. We denote by $\overline{f_{\chi}} \in S_{\frac{3}{2}}(\Gamma_1(4\ell^2), \overline{\chi'})$ the form obtained by applying the complex conjugation to the Fourier coefficients of $f_{\chi}$. We claim that we have 
\begin{equation}\label{eq_f_chi_bar}
\overline{f_{\chi}} = f_{\chi} \otimes \chi^{-1} \text{,}
\end{equation}
where $f_{\chi} \otimes \chi^{-1}$ is the twist of $f_{\chi}$ by $\chi^{-1}$, which belongs to $S_{\frac{3}{2}}(\Gamma_1(4\ell^2), \overline{\chi'})$ (\cf \cite[Lemma 3.6]{Shimura_annals}). Indeed, by unicity (with the normalization $a_1=1$), it suffices to check that $f_{\chi} \otimes \chi^{-1}$ satisfies the conditions (i), (ii) and (iii) above, which is straightforward using \cite[\S 3 (7)]{Bruinier}.

We thus have 
$$\overline{a_{\Delta}(f_{\chi})} = \chi(\Delta)^{-1}\cdot a_{\Delta}(f_{\chi}) \text{,}$$
so we can rewrite (\ref{eq_key_eq_a_Delta/a_1}) as
\begin{equation}\label{eq_Shimura_lift_f_chi}
(a_{\Delta}(f_{\chi}))^2 = \chi(\Delta)\cdot \sqrt{\Delta}\cdot \frac{L(F, \chi_{\Delta},1)}{L(F,1)}\text{.}
\end{equation}

One can replace $\chi$ (equivalently $\chi'$) by a Galois conjugate $\chi^{\sigma}$ (where $\sigma \in \Gal(\Q(\chi)/\Q)$), and glue all the weight $\frac{3}{2}$ cuspforms obtained this way. This gives rise to a form 
\begin{equation}\label{def_F_chi}
F_{\chi} := (f_{\chi^{\sigma}})_{\sigma}
\end{equation}
with coefficients in $R\otimes_{\Z} \C \simeq \prod_{\sigma} \C$ (where $\sigma$ runs through $\Gal(\Q(\chi)/\Q))$), instead of just $\C$ for the individual forms $f_{\chi^{\sigma}}$. Note that $\chi$ and $\chi'$ can be considered as taking values in $R \otimes_{\Z} \C$ (\ie we identify $\chi(x) \in R$ with $\chi(x) \otimes 1 \in R \otimes_{\Z} \C$, and similarly for $\chi'$), and that $F_{\chi}$ has Nebentype $\chi'$.

We record the conclusion of the above discussion in the following.

\begin{prop}\label{prop_Shimura_lift}
Let $F \in S_2(\Gamma_0(\ell)))$ be a newform such that $L(F,1) \neq 0$ (in particular, we have $w_{\ell}F = -F$).
Let $\chi : (\Z/\ell\Z)^{\times}\rightarrow \C^{\times}$, $\chi' : (\Z/4\ell^2\Z)^{\times} \rightarrow \C^{\times}$ and $R = \Z[\chi] = \Z[\chi']$ be as defined in the notation of this section.

There exists a unique cuspform $F_{\chi} \in S_{\frac{3}{2}}(\Gamma_1(4\ell^2), \chi')_{R\otimes_{\Z} \C}$ satisfying the following properties.
\begin{enumerate}
\item We have $a_1(F_{\chi})=1$.
\item\label{prop_Shimura_lift_ii} For all prime $q \nmid 2\ell$, we have
\begin{equation}\label{eq_T_q^2_f}
T_{q^2}F_{\chi} =  \chi(q)\cdot a_q(F)\cdot F_{\chi}\text{.}
\end{equation}
(Here, as recalled above, $\chi(q)$ is considered in $R \otimes_{\Z}\C$.)
\item If $n \equiv 2,3 \text{ (modulo }4\text{)}$, then $a_n(F_{\chi}) = 0$.
\item We have $a_{\Delta}(F_{\chi})=0$ if $\Delta$ is the discriminant of a real quadratic field in which $\ell$ does not split, \ie $\Delta \not\in \mathcal{D}_{\ell}$.
\item For all $\Delta \in \mathcal{D}_{\ell}$, we have
\begin{equation}\label{eq_Shimura_lift}
(a_{\Delta}(F_{\chi}))^2 = \chi(\Delta)\cdot \sqrt{\Delta}\cdot \frac{L(F, \chi_{\Delta},1)}{L(F,1)}\text{.}
\end{equation}
(Again, $\chi(\Delta)$ is considered in $R \otimes_{\Z}\C$.)
\end{enumerate}
\end{prop}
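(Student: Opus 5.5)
The proof is essentially a repackaging of the discussion preceding the statement, carried out Galois-conjugate by Galois-conjugate and then glued.

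\emph{Existence.} For each $\sigma \in \Gal(\Q(\chi)/\Q)$, the character $\chi^{\sigma}$ is again an odd character of $(\Z/\ell\Z)^{\times}$, and $(\chi^{\sigma})' = (\chi')^{\sigma}$. Since $L(F,1)\neq 0$, the sign of the functional equation is $+1$, so $w_{\ell}F=-F$. We may therefore apply \cite[Theorem 10.1]{Mao} with $N=N'=\ell$ and $S=\{\ell\}$ to $\chi^{\sigma}$. This gives a nonzero form $f_{\chi^{\sigma}} \in S_{\frac{3}{2}}(\Gamma_1(4\ell^2),(\chi^\sigma)')$, unique up to scalar, satisfying \ref{Mao_i}, \ref{Mao_ii} and \ref{Mao_iii}, together with the Waldspurger-type identity $\abs{a_\Delta}^2 = C_\sigma\sqrt{\Delta}L(F,\chi_\Delta,1)$ for $\Delta\in\mathcal{D}_\ell\cup\{1\}$.

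Taking $\Delta=1$ gives $\abs{a_1}^2=C_\sigma L(F,1)\neq 0$. Hence $a_1(f_{\chi^\sigma})\neq 0$, and we normalize so that $a_1=1$. Dividing the two identities yields (\ref{eq_key_eq_a_Delta/a_1}).

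To remove the absolute value, we show (\ref{eq_f_chi_bar}) for $\chi^\sigma$. Using \cite[\S 3 (7)]{Bruinier}, the twist $f_{\chi^\sigma}\otimes(\chi^{\sigma})^{-1}$ lies in $S_{\frac32}(\Gamma_1(4\ell^2),\overline{(\chi^\sigma)'})$ and satisfies the analogues of \ref{Mao_i}--\ref{Mao_iii} for $\overline{\chi^\sigma}$. Its first coefficient is $1$, and the Hecke eigenvalues $\overline{\chi^\sigma(q)}a_q(F)$ agree with those of $\overline{f_{\chi^\sigma}}$, since $a_q(F)\in\R$. So the uniqueness in Baruch--Mao for $\overline{\chi^\sigma}$ forces equality. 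It follows that $\overline{a_\Delta}=\chi^\sigma(\Delta)^{-1}a_\Delta$, and multiplying by $a_\Delta$ turns (\ref{eq_key_eq_a_Delta/a_1}) into (\ref{eq_Shimura_lift_f_chi}) with $\chi^\sigma$ in place of $\chi$.

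We then set $F_\chi=(f_{\chi^\sigma})_\sigma$ under $R\otimes_\Z\C\simeq\prod_\sigma\C$. The map sends $r\otimes 1\mapsto(\sigma(r))_\sigma$, so $\chi(q)$ and $\chi(\Delta)$ become $(\chi^\sigma(q))_\sigma$ and $(\chi^\sigma(\Delta))_\sigma$. Properties (1)--(5) then hold componentwise, and $F_\chi$ has Nebentype $\chi'$ in this sense.

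\emph{Uniqueness.} Any $G$ satisfying (1)--(5) decomposes as $(g_\sigma)_\sigma$. Each $g_\sigma$ satisfies (1)--(3) for $\chi^\sigma$ (property (3) gives the Kohnen space condition), and (4) gives \ref{Mao_iii}. Baruch--Mao uniqueness up to scalar, together with $a_1=1$, then gives $g_\sigma=f_{\chi^\sigma}$.

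The main point requiring care is the precise uniqueness statement in \cite[Theorem 10.1]{Mao}: one must check that the Hecke eigenvalue conditions away from $2\ell$ plus the Kohnen and local conditions really pin down a one-dimensional space. The same uniqueness is also what makes the conjugation identity (\ref{eq_f_chi_bar}) work. If it is needed, I would appeal to strong multiplicity one for the generalized Shimura correspondence, as in \cite[Theorem 1.4]{Mao_generalized_Shimura}.
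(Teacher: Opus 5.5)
Your proposal is correct and follows the paper's argument essentially step for step: you apply \cite[Theorem 10.1]{Mao} to each conjugate $\chi^{\sigma}$, normalize $a_1=1$ using $L(F,1)\neq 0$, remove the absolute values via the identity $\overline{f_{\chi}} = f_{\chi}\otimes\chi^{-1}$ obtained from uniqueness, and glue over $\Gal(\Q(\chi)/\Q)$. As in the paper, the uniqueness up to scalar in Baruch--Mao is the input behind both the conjugation identity and the uniqueness of $F_{\chi}$.
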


\begin{rem}
As we shall see in Theorem \ref{prop_main_wt_3/2} (and its proof) below, the restriction $q \nmid 2\ell$ in Proposition \ref{prop_Shimura_lift} (\ref{prop_Shimura_lift_ii}) is not necessary for $q=\ell$, but is necessary for $q=2$, as $F_{\chi}$ is not an eigenvector for the Hecke operator $T_{2^2}$. Recall (\cf \cite[Theorem 1.7]{Shimura_annals}) we have
$$a_n(T_{2^2}(f)) = a_{4n}(f)$$
for all $n\geq 1$ and $f$ cuspform of weight $\frac{3}{2}$. We then see that the condition defining the Kohnen space is not stable by $T_{2^2}$.
\end{rem}

\subsection{An Eisenstein congruence in weight $\frac{3}{2}$}
We now apply the discussion of the previous paragraph to the case where $F$ satisfies the Eisenstein congruence (\ref{eq_Eis_congruence}), to prove the following.

\begin{thm}\label{prop_main_wt_3/2}
Let $F \in S_2(\Gamma_0(\ell)))$ be a newform satisfying the Eisenstein congruence (\ref{eq_Eis_congruence}) (in particular, we know that $L(F,1) \neq 0$). 
Let $\chi : (\Z/\ell\Z)^{\times}\rightarrow \C^{\times}$, $\chi' : (\Z/4\ell^2\Z)^{\times} \rightarrow \C^{\times}$ and $R = \Z[\chi] = \Z[\chi']$ be as defined in the notation of this section. Let $F_{\chi} \in S_{\frac{3}{2}}(\Gamma_1(4\ell^2),\chi')_{R \otimes_{\Z} \C}$ be as in Proposition \ref{prop_Shimura_lift}.

The Fourier coefficients of $F_{\chi}$ lie in $R \otimes_{\Z} K_F$, where $K_F \subset \C$ is the Hecke field of $F$. Furthermore, when considered in $R \otimes_{\Z} K_F\otimes_{\mathcal{O}_F} \mathcal{O}_{\mathfrak{P}_F}$, the Fourier coefficients of $F_{\chi}$ actually lie in the subring $R \otimes_{\Z} \mathcal{O}_F \otimes_{\mathcal{O}_F} \mathcal{O}_{\mathfrak{P}_F}$. Furthermore, the following holds.
\begin{enumerate}[label=(\roman*)]
\item \label{prop_main_wt_3/2_0} We have $$T_{\ell^2}F_{\chi} = 0 \text{,}$$
\ie for all $n\geq 1$, $a_{n\ell^2}(F_{\chi})=0$.
\item\label{prop_main_wt_3/2_i} We have $$F_{\chi} \equiv \theta_{\chi} \text{ (modulo }I_F)$$
where $\theta_{\chi} \in S_{\frac{3}{2}}(4\ell^2, \chi')_R$ is the classical theta series (\cf \cite[\S 2]{Shimura_annals}) given by the Fourier expansion 
$$\theta_{\chi} := \sum_{n\geq 1} n\cdot \chi(n)\cdot q^{n^2} \text{.}$$
In particular, for all $n\geq 1$ which is not a perfect square, we have 
$$a_n(F_{\chi})  \in I_F\cdot (R \otimes_{\Z} \mathcal{O}_F \otimes_{\mathcal{O}_F} \mathcal{O}_{\mathfrak{P}_F}) \text{.}$$
\item\label{prop_main_wt_3/2_ii} For all $n\geq 1$ which is not a perfect square, we denote by $\widetilde{a_n(F_{\chi})}$ the image of $a_n(F_{\chi})$ in $R/pR$ via the map (\ref{eq_I_F/I_F^2}). For all $\Delta \in \mathcal{D}_{\ell}$, we have in $R/pR$:
\begin{equation}\label{eq_a_Delta_mod_I^2}
\widetilde{a_{\Delta}(F_{\chi})} =  \pm\chi(\sqrt{\Delta} \text{ modulo }\mathfrak{l})\cdot h(\Delta)\cdot \log_{\l, p}(\epsilon_{\Delta}) \text{,}
\end{equation}
for any prime $\mathfrak{l}$ above $\ell$ in $K=\Q(\sqrt{\Delta})$ and for some sign $\pm$ depending \emph{a priori} on $\Delta$. (The right-hand side does not depend on the choice of $\mathfrak{l}$.)
\item\label{prop_main_wt_3/2_iii}
Let $n\geq 1$ which is not a perfect square, and denote by $\Delta$ be the discriminant of the real quadratic field $\Q(\sqrt{n})$. If $\widetilde{a_n(F_{\chi})} \neq 0$ (in $R/pR$), then $\widetilde{a_{\Delta}(F_{\chi})} \neq 0$ (in particular $\Delta \in \mathcal{D}_{\ell}$, \ie $\ell$ splits in $\Q(\sqrt{n})$).
\end{enumerate}
\end{thm}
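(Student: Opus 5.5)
The plan is to reduce every coefficient $a_n(F_{\chi})$ to the coefficients at fundamental discriminants via the Hecke relations, and then to read those off from Proposition \ref{prop_main_congruence_L}. Write $n=\Delta m^2$ with $\Delta$ a fundamental discriminant, or $\Delta=1$ when $n$ is a square.

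\textbf{Step 1: the fundamental coefficients.} For $\Delta\in\mathcal{D}_{\ell}$, comparing (\ref{eq_Shimura_lift}) with Proposition \ref{prop_main_congruence_L} gives
$$a_{\Delta}(F_{\chi})^2=\chi(\Delta)\, L_F(\Delta)^2 .$$
Since $\Delta\equiv(\sqrt{\Delta} \text{ mod } \mathfrak{l})^2 \pmod{\ell}$, we have $\chi(\Delta)=\chi(\sqrt\Delta \text{ mod }\mathfrak{l})^2$. Hence in each component $\sigma$ we get $a_{\Delta}(f_{\chi^\sigma})=\pm\chi^\sigma(\sqrt\Delta \text{ mod }\mathfrak l)\,L_F(\Delta)$. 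This already shows that $a_{\Delta}(F_\chi)\in R\otimes K_F$, that it lies in $I_F\cdot(R\otimes\mathcal{O}_{\mathfrak P_F})$ by (\ref{eq_trivial_divisibility}), and that it has image $\pm\chi(\sqrt\Delta \text{ mod } \mathfrak l)\,h(\Delta)\log_{\mathfrak l,p}(\epsilon_\Delta)$. This is (ii), modulo the sign bookkeeping across the $\sigma$-components, where one checks that the signs can be chosen coherently. For $\Delta\notin\mathcal{D}_\ell$ the coefficient vanishes by Proposition \ref{prop_Shimura_lift}, and $a_1=1$.

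\textbf{Step 2: the Hecke recursion for $m$.} For odd primes $q\neq\ell$, formula (\ref{eq_def_T_q^2}) together with (\ref{eq_T_q^2_f}) gives the standard recursion
$$a_{\Delta q^{2k+2}}=\bigl(\chi(q)a_q(F)-\chi(q)\left(\tfrac{\Delta q^{2k}}{q}\right)\bigr)a_{\Delta q^{2k}}-q\chi(q^2)a_{\Delta q^{2k-2}},$$
so $a_{\Delta m^2}=P(a_q(F),\chi(q))\cdot a_\Delta$ for odd $m$ prime to $\ell$, with $P$ a polynomial with coefficients in $R$. For the prime $\ell$ we need (i), namely $T_{\ell^2}F_\chi=0$. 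I would prove this from the local structure in Baruch--Mao: at $\ell$ the local representation of $F$ is the Steinberg twist with $U_\ell=1$, and the local vector chosen in \cite{Mao} with character $\chi$ of conductor $\ell$ is killed by the local $U_{\ell^2}$. Alternatively, $T_{\ell^2}F_\chi$ satisfies properties (2)--(4) of Proposition \ref{prop_Shimura_lift} but has $a_1=a_{\ell^2}$, so it is a multiple of $F_\chi$, and one then shows that the multiple is $0$.

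\textbf{Step 3: the prime $2$ (main obstacle).} The hardest step is the prime $2$, that is, controlling $a_{4^k\Delta}$. Kohnen's plus space is not stable under $T_{2^2}$. I would extract from the construction of \cite{Mao} that the Kohnen projection of $U_4$ acts on $F_\chi$ by a scalar attached to $a_2(F)$ and $\chi(2)$, in the same way as Kohnen's $T^+_4$. This would yield the same shape of recursion as in Step 2, namely $a_{4\Delta}=(\chi(2)a_2(F)-\chi(2)(\tfrac{\Delta}{2}))a_\Delta$ and so on. I expect to verify this through the explicit local Whittaker/Waldspurger computation at $2$ in \cite{Mao}.

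\textbf{Step 4: consequences.} With these recursions in hand, integrality follows from Step 1, since $P$ has integral coefficients. For (ii): modulo $I_F$ we have $a_q(F)\equiv 1+q$, so the square coefficients $a_{m^2}$ satisfy exactly the recursion with eigenvalue $\chi(q)(1+q)$ that the theta series $\theta_\chi$ satisfies, and both start at $a_1=1$. For the same reason, $a_{\ell^2m^2}=0=\chi(\ell)$. Meanwhile nonsquare coefficients are multiples of $a_\Delta\in I_F$. For (iii): reducing modulo $I_F^2$, and using that $a_\Delta\in I_F$, we get
$$a_{\Delta m^2}\equiv P(1+q,\chi(q))\,a_\Delta \pmod{I_F^2}.$$
Hence $\widetilde{a_n}=c\cdot\widetilde{a_\Delta}$ with $c\in R/pR$, so $\widetilde{a_n}\neq0$ forces $\widetilde{a_\Delta}\neq 0$. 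In particular $a_\Delta\neq0$, so $\Delta\in\mathcal{D}_\ell$.
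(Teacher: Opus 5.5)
Away from the prime $2$ your outline is the paper's proof. The fundamental coefficients come from (\ref{eq_Shimura_lift}) and Proposition \ref{prop_main_congruence_L}. The $T_{q^2}$-recursion for $q\nmid 2\ell$ reduces $a_{\Delta m^2}$ to $a_\Delta$. Point (i) is the vanishing of Mao's local vector $\Phi_{\chi_\ell}$ (supported on $\Z_\ell^\times$) under Waldspurger's $\tilde{T}'_\ell$. Points (ii) and (iii) come from reading the recursions modulo $I_F$ and $I_F^2$.

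Two side remarks on this part:
\begin{itemize}
\item Your ``alternative'' argument for (i) is empty. Nothing tells you that $T_{\ell^2}F_\chi$ satisfies property (4) of Proposition \ref{prop_Shimura_lift}, and showing that the multiple is $0$ is the whole claim.
\item The coherence of signs across the components $f_{\chi^\sigma}$ is not mere bookkeeping. Without it, $a_\Delta(F_\chi)$ need not lie in $R\otimes_{\Z}K_F$ at all. The paper gets it from Mao's normalization of the Whittaker functional at $\ell$, see (\ref{eq_comparison_chi_chi'}). You could instead conjugate $f_\chi$ by $\tau\in\mathrm{Aut}(\C/K_F)$ and use uniqueness.
\end{itemize}

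The genuine gap is Step 3, which is the heart of the theorem and which you only announce. Proposition \ref{prop_Shimura_lift} says nothing at $2$ beyond the plus-space condition. The recursion you want is
\[ a_{4n}+\chi(2)\left(\frac{n}{2}\right)a_n+2\chi(4)\,a_{n/4}=\chi(2)a_2(F)\,a_n \qquad (n\equiv 0,1 \bmod 4), \]
and it requires three inputs you do not supply:
\begin{itemize}
\item an operator $T^+(4)$ on $S^+_{3/2}(\Gamma_1(4\ell^2),\chi')$ with this coefficient formula, commuting with the $T_{q^2}$;
\item a multiplicity-one statement making $F_\chi$ an eigenvector of it;
\item the identification of the eigenvalue as $\chi(2)a_2(F)$.
\end{itemize}
The paper never uses such an operator. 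You must either quote a version of Kohnen's plus-space theory valid for level $4\ell^2$ and nebentypus $\chi'$ (the statement is purely local at $2$), or do the local computation.

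That local computation is what the paper does, in another guise. It writes $f_\chi=\mu_1f_{1,\chi}+\mu_2f_{2,\chi}$, where the $f_{i,\chi}$ are genuine $T_{2^2}$-eigenforms with eigenvalues $\beta_i=2\chi(2)\alpha_i^{-1}$ (coming from Waldspurger's vectors $e_1,e_2$). It proves $\mu_1\equiv1$ and $\mu_2\equiv0 \pmod{I_F}$, and computes $a_D(f_{i,\chi})/a_D(f_\chi)$ according to $D$ mod $8$ from Mao's Whittaker values. Since $\beta_1+\beta_2=\chi(2)a_2(F)$ and $\beta_1\beta_2=2\chi(4)$, your recursion encodes exactly this data; at $n=1$ it reads $\mu_1(\alpha_1-1)+\mu_2(\alpha_2-1)=0$.

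Once established, your recursion is the cleaner route. Integrality, $F_\chi\equiv\theta_\chi$ (note that $\theta_\chi$ satisfies it with eigenvalue $3\chi(2)$) and (iii) follow exactly as at odd primes. The paper's case of $m$ half an odd integer is vacuous, since then $n\equiv 2,3 \bmod 4$.

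Your recursion appears to be correct, but it does not agree with (\ref{eq_mu_1})--(\ref{eq_ratio_a_D_0_2}) as printed. It forces $\mu_1=\frac{\alpha_2-1}{\alpha_2-\alpha_1}=\frac{2-\alpha}{2-\alpha^2}$ and $\mu_2=\frac{\alpha(1-\alpha)}{2-\alpha^2}$. The discrepancy appears to come from the change of basis at $2$. With $e_2$ as defined,
\[ \frac{i+1}{4}\mu(2^2)F[2,1]+F[2,2^2]=\frac{i+1}{4}\cdot\frac{\mu(2^2)(\mu(2^2)-2)}{\mu(2^2)-1}\,e_1+e_2, \]
and the image of this vector under $\tilde{L}_2$ is $\frac{i+1}{4}\mu(2^2)(2+\alpha)$. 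Redoing the computation with these values reproduces your recursion; for example, $a_D(f_{1,\chi})=\frac{2+\alpha}{2-\alpha}\,a_D(f_\chi)$ for $D\equiv 5 \bmod 8$. The congruences $\mu_1\equiv 1$ and $\mu_2\equiv 0 \pmod{I_F}$, on which the paper's conclusions rest, are unaffected.
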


\begin{rem}\label{rem_theta_series}
\begin{enumerate}
\item\label{rem_theta_series_1} In the special case $\ell=11$ and $p=5$, the congruence of Theorem \ref{prop_main_wt_3/2} \ref{prop_main_wt_3/2_i} follows from the explicit computations of \cite[\S 10.7]{Mao} (note that there is a typo in the last paragraph of p. 381: the $n$th Fourier coefficients of the theta series should be $n\cdot \left(\dfrac{n}{11}\right)$ and not $\left(\dfrac{n}{11}\right)$).
\item\label{rem_theta_series_2} We check that $\theta_{\chi}$ is an eigenform for the Hecke operator $T_{q^2}$ for \emph{all} primes $q$, with Hecke eigenvalue $(q+1)\cdot \chi(q)$ if $q \nmid 2\ell$, $0$ if $q=\ell$ and $2\chi(2)$ if $q=2$.
\item\label{rem_theta_series_3} Although \cite[Theorem 10.1]{Mao} only applies \emph{a priori} when $F$ is a cuspform, we see that, formally speaking, the weight $\frac{3}{2}$-modular form associated to the weight $2$ Eisenstein series $E_{2,\ell} \in M_2(\Gamma_0(\ell))$ is simply $\theta_{\chi} \in S_{\frac{3}{2}}(\Gamma_0(4\ell^2), \chi')$. Note also that (\ref{eq_Shimura_lift}) also holds, as $L(E_{2,\ell}, \chi_D, 1)=0$ if $D \in \mathcal{D}_{\ell}$. Therefore, Theorem \ref{prop_main_wt_3/2} \ref{prop_main_wt_3/2_i} reflects the fact that the Eisenstein congruence (\ref{eq_Eis_congruence}) is in some sense compatible with the (generalized) Shimura lifting. 
\item\label{rem_theta_series_4} One can ask what is the sign in (\ref{eq_a_Delta_mod_I^2}), and in particular whether it can be taken to be equal to $1$.
\end{enumerate}
\end{rem}
\begin{proof}
This essentially follows from (\ref{eq_Shimura_lift}) and Proposition \ref{prop_main_congruence_L}, except that some difficulties arise due to the restriction $q \nmid 2\ell$ in (\ref{eq_T_q^2_f}). The main issue is that $f_{\chi}$ is \emph{not} an eigenvector for the Hecke operator $T_{2^2}$. 
However, as we shall see, $f_{\chi}$ is a linear combination of two cusp forms in $S_{\frac{3}{2}}(\Gamma_1(4\ell^2), \chi')$ which are eigenforms for \emph{all} the Hecke operators $T_{q^2}$ and satisfy similar properties as the ones $f_{\chi}$ described in Proposition \ref{prop_Shimura_lift} (except for the third one). We will need to use the adelic language, following and relying on \cite{Mao} and \cite{Waldspurger_demi_entier}. 

Proof of \ref{prop_main_wt_3/2_0} and \ref{prop_main_wt_3/2_i}.  As in \cite[\S 9.1]{Mao}, corresponding to $F$ is a vector
$$\varphi = \otimes' \varphi_v  = s(F) \in \pi \text{,}$$
where 
$$\pi = \otimes_v'\pi_v$$
is the irreducible cuspidal automorphic representation of $\PGL_2$ corresponding to $F$.

As in \cite[\S 9.2]{Mao}, corresponding to $f_{\chi}$ is a vector 
$$\tilde{\phi}_{\chi} = \otimes' \tilde{\varphi}_v = t(f_{\chi}) \in \tilde{\pi} \text{,}$$
where 
$$\tilde{\pi} = \otimes'_v\tilde{\pi}_v$$
is an irreducible cuspidal automorphic representation of the metaplectic group given by
$$\tilde{\pi} = \Theta(\pi \otimes \chi_D, \psi^D)$$
for any $D \in \mathcal{D}_{\ell}$ (\ie such that $D>0$ is a non-zero square modulo $\ell$).
Here, $\chi_D$ is the quadratic Dirichlet character corresponding to $D$, $\psi$ is the usual additive character and $\Theta$ is the theta correspondence (we refer to \cite[\S 3]{Mao} for details).

If $q$ is a prime, let $\chi_q$ be the character of $\Q_q^{\times}$ which is the component at $q$ of the character on $\mathbb{A}_{\Q}^{\times}/\Q^{\times}$ canonically associated with $\chi$. In particular, for $q \neq \ell$, the character $\chi_q$ is unramified and we have $\chi_q(q) = \chi(q)$.

For $q \mid 2\ell$, Waldspurger defines in \cite{Waldspurger_demi_entier} a Hecke operator $\tilde{T}_q'$ acting on $\tilde{\phi}_{\chi}$ and we have (\cf \cite[III.B Lemma 4 (ii)]{Waldspurger_demi_entier}) that we have
\begin{equation}\label{eq_compatibility_Hecke}
T_{q^2}f_{\chi} = \sqrt{q}^{-1}\cdot\tilde{\gamma}_q(q)^{-1}\cdot \chi_q(q)\cdot s(\tilde{T}_q' \tilde{\phi}_{\chi}) \text{,}
\end{equation}
where $\tilde{\gamma}_q(q)$ is the Weil constant (\cf \cite[II \S 2]{Waldspurger_demi_entier}). As recalled in \cf \cite[II \S 6]{Waldspurger_demi_entier}, we have $\tilde{\gamma}_2(2)=1$.

In \cite[\S 8]{Mao}, an explicit description of $\tilde{\pi}_v$ and $\tilde{\varphi}_v$ (up to scalar) is given for all places $v \neq 2$. In particular, if $v=\ell$, from our condition $D \in \mathcal{D}_{\ell}$ and the fact that $w_{\ell}F=-F$, we get by \cite[\S 10.5]{Mao} that $\tilde{\pi}_{\ell}$ is the supercuspidal representation $r_{\psi}^-$ described in \cite[\S 8.3.3]{Mao} and $\tilde{\varphi}_{\ell}$ is, up to scalar, an explicit function on $\Q_{\ell}$ in $r_{\psi}^-$ denoted by $\Phi_{\chi_{\ell}}$ in \cite[Proposition 8.5]{Mao}. Note that for all $x \in \Z_{\ell}^{\times}$, we have
$$\chi_{\ell}(x) = \overline{\chi}(x \text{ (modulo }\ell\text{)}) \text{.}$$

By definition, the function $\Phi_{\chi_{\ell}}$ is supported on $\Z_{\ell}^{\times}$. By \cite[V \S 2 Lemme 7 (iii)]{Waldspurger_demi_entier} (with $n=2$), we see that $\tilde{T}_{\ell}'\tilde{\varphi}_{\ell} = 0$. By (\ref{eq_compatibility_Hecke}), we get:
\begin{equation}\label{eq_T_ell^2_phi}
T_{\ell^2}f_{\chi} = 0 \text{.}
\end{equation}
We now consider the place $v=2$. The choice of $\tilde{\varphi}_2$ is given in details in \cite[\S 9.4]{Mao}. Since $a_2(F) \equiv 3 \text{ (modulo }\mathfrak{P}_F \text{)}$ and the polynomial $X^2-3X+2 = (X-1)(X-2)$ splits over $\mathbf{F}_p$, by Hensel's lemma we get 
$$X^2-a_2(F)X+2 = (X-\alpha_1)(X-\alpha_2)$$
for some $\alpha_1, \alpha_2\in \mathcal{O}_{\mathfrak{P}_F}$, which we also view in $\C$. Without loss of generality, let us assume that 
\begin{equation}\label{eq_congruence_alpha_1}
\alpha_1 \equiv 1 \text{ (modulo }I_F \text{).}
\end{equation}
For simplicity of notation, we let $\alpha := \alpha_1$ in what follows. Note that 
$$\alpha \neq 1 \text{,}$$ 
as otherwise we would have $a_2(F)=3$, which would contradict the Ramanujan bound $\abs{a_2(F)} \leq 2\sqrt{2}$.

Let $\mu : \Q_2^{\times} \rightarrow \C^{\times}$ be the unramified character of $\Q_2^{\times}$ such that 
$$\mu(2) = \frac{\alpha}{\sqrt{2}} \text{.}$$
Note that we have $\overline{\alpha_1} = \alpha_2$, so 
$$\overline{\mu(2)} = \frac{\alpha_2}{\sqrt{2}} = \frac{\sqrt{2}}{\alpha_1} = \mu(2)^{-1}$$
\ie $\mu(2)$ is a root of unity.
By construction, we have
\begin{equation}\label{eq_congruence_mu_2^2}
\mu(2^2) \equiv 2^{-1} \text{ (modulo }I_F \text{).} 
\end{equation}
In particular, we get $\mu(2^2) \neq 1$.

In \cite[VI \S 8 Proposition 13 (iii)]{Waldspurger_demi_entier} (with $\chi_0$ trivial and $n=2$), Waldspurger proves that there are two vectors $F[2,1]$ and $F[2,2^2]$ of $\tilde{\pi}_2$ such that
$$\tilde{T}_2'F[2,1] = 2\mu(2)^{-1}\cdot F[2,1] $$
and
$$\tilde{T}_2'F[2,2^2] = 2\mu(2)\cdot F[2,2^2] + \frac{i+1}{2}\cdot \mu(2)\cdot F[2,1] \text{.}$$
Therefore, the two vectors $$e_1 := F[2,1]$$ and $$e_2 := \frac{i+1}{4}\cdot\frac{\mu(2^2)}{\mu(2^2)-1}\cdot F[2,1]+F[2,2^2]$$ are eigenvectors for $\tilde{T}_2'$ with (distinct) eigenvalues $2\mu(2)^{-1}$ and $2\mu(2)$ respectively (note that we use the fact that $\mu(2^2) \neq 1$).

Consider now the two weight $\frac{3}{2}$ cuspforms of level $\Gamma_1(4\ell^2)$ and character $\chi'$
$$g_{1, \chi} := s(\otimes_{v\neq 2} \tilde{\varphi}_v \otimes_{v=2} e_1)$$
and
$$g_{2, \chi} := s(\otimes_{v\neq 2} \tilde{\varphi}_v \otimes_{v=2} e_2) \text{.}$$
These are, by construction, eigenforms for all Hecke operators $T_{q^2}$, such that, for $i \in \{1,2\}$, we have
$$T_{\ell^2}g_{i,\chi}  = 0 \text{,}$$
$$T_{2^2}g_{i, \chi} = 2\chi(2)\alpha_i^{-1}\cdot g_{i, \chi} \text{,}$$
and for all primes $q \nmid 2\ell$,
$$T_{q^2}g_{i, \chi} = \chi(q)\cdot a_q(F)\cdot g_{i, \chi} \text{.}$$

By \cite[Theorem 4.3]{Mao}, we have $a_1(g_{i,\chi}) \neq 0$ for $i=1,2$. Thus, there exists $\lambda_{i,\chi} \in \C^{\times}$ such that $f_{i,\chi} := \lambda_{i,\chi}\cdot g_{i,\chi}$ satisfies $a_1(f_{i,\chi})=1$. Obviously, $f_{i,\chi}$ satisfies the same Hecke relations as $g_{i,\chi}$, \ie
\begin{equation}\label{eq_Hecke_relation_f_i_chi}
\begin{array}{lcl}
		T_{\ell^2}f_{i,\chi}  &=& 0, \\
		 T_{2^2}f_{i,\chi}  &=& 2\chi(2)\alpha_i^{-1}\cdot f_{i, \chi},\\
		\forall q\nmid 2\ell \text{ (}q\text{ prime) } T_{q^2}f_{i, \chi} &=& \chi(q)\cdot a_q(F)\cdot f_{i, \chi}  \text{.}
	\end{array}
\end{equation}
These Hecke relations, together with the normalization $a_1(f_{i,\chi})=1$, characterize uniquely $f_{i,\chi}$. In particular, using the relation $\overline{\alpha_1} = \alpha_2$, a similar argument as for (\ref{eq_f_chi_bar}) yields
\begin{equation}\label{eq_f_chi_bar_1_2}
\overline{f_{1,\chi}} = f_{2, \chi^{-1}} = f_{2,\chi} \otimes \chi^{-1} \text{.}
\end{equation}

Let us now compute the ratio $\frac{\lambda_{1,\chi}}{\lambda_{2,\chi}} \in \C^{\times}$. By \cite[\S 2.2]{Mao}, we have
$$\frac{a_1(g_{1,\chi})}{a_1(g_{2, \chi})} = \frac{\tilde{L}_2(e_1)}{\tilde{L}_2(e_2)} \text{,}$$
where $\tilde{L}_2$ is any choice (unique up to non-zero scalar) of local Whittaker functional at the place $2$ for $\tilde{\pi}_2$. By \cite[Lemma 9.3]{Mao}, one can normalize $\tilde{L}_2$ such that
$$\tilde{L}_2(F[2,1])=1$$
and
$$\tilde{L}_2(F[2,2^2])=(\mu(2^2)+\sqrt{2}\mu(2^3))\cdot\frac{i+1}{4} \text{.}$$
We choose this normalization for the rest of the proof.

We get
\begin{equation}\label{eq_ratio_lambdas}
\frac{\lambda_{1,\chi}}{\lambda_{2,\chi}} = \frac{a_1(g_{2,\chi})}{a_1(g_{1,\chi})} =\frac{i+1}{4}\cdot\frac{\mu(2^2)}{\mu(2^2)-1} + \frac{i+1}{4}\cdot (\mu(2^2)+\sqrt{2}\mu(2^3))  =\frac{i+1}{4}\cdot\frac{\mu(2^2)}{\mu(2^2)-1} + \frac{i+1}{4}\cdot (\mu(2^2)+\alpha\mu(2^2))  \text{.}
\end{equation}
(We have used the definition $\mu(2) = \frac{\alpha}{\sqrt{2}}$ in the last equality.)

By \cite[\S 9.4]{Mao}, it is proved that $\tilde{\varphi}_2$ is proportional to
$$
\frac{i+1}{4}\cdot \mu(2^2)\cdot F[2,1]+F[2,2^2] = \frac{i+1}{4}\cdot \frac{\mu(2^4)}{\mu(2^2)-1}\cdot e_1 + e_2  \text{.}
$$
Thus, $f_{\chi}$ is proportional to $\frac{i+1}{4}\cdot \frac{\mu(2^4)}{\mu(2^2)-1}\cdot g_{1, \chi} + g_{2,\chi}$, which is proportional to 
$$
\frac{i+1}{4}\cdot \frac{\mu(2^4)}{\mu(2^2)-1}\cdot f_{1,\chi} + \frac{\lambda_{1,\chi}}{\lambda_{2,\chi}}\cdot f_{2,\chi} \text{.}
$$
By (\ref{eq_ratio_lambdas}), there exists $\lambda \in \C^{\times}$ such that
$$
f_{\chi} = \lambda\cdot \left( \frac{\mu(2^4)}{\mu(2^2)-1}\cdot f_{1, \chi} + \left(\frac{\mu(2^2)}{\mu(2^2)-1} +  \mu(2^2)+\alpha_1\mu(2^2) \right)\cdot f_{2, \chi} \right) \text{.}
$$
Since $a_1(f_{\chi}) = a_1(f_{1, \chi}) = a_1(f_{2, \chi})=1$, we get
$$
\lambda = \frac{\mu(2^2)-1}{2\mu(2^4)-\alpha_1\mu(2^2)+\alpha_1\mu(2^4)} \text{.}
$$
We get
\begin{equation}\label{eq_decomposition_fchi}
f_{\chi} = \mu_1 f_{1, \chi}+\mu_2f_{2,\chi}
\end{equation}
where $\mu_1,\mu_2 \in K_F(\alpha)$ (which is at most a quadratic extension of $K_F$) are such that
\begin{equation}\label{eq_mu_1}
\mu_1 = \frac{\alpha}{\alpha^2+2\alpha-2}
\end{equation}
and
\begin{equation}\label{eq_mu_2}
\mu_2 = \frac{(\alpha-1)(\alpha+2)}{\alpha^2+2\alpha-2} \text{.}
\end{equation} 
By (\ref{eq_congruence_alpha_1}), we get
\begin{equation}\label{eq_congruence_mu_1}
\mu_1 \equiv 1 \text{ (modulo } I_F \text{)}
\end{equation}
and
\begin{equation}\label{eq_congruence_mu_2}
\mu_2 \equiv 0 \text{ (modulo } I_F \text{).}
\end{equation}

Let us now study the coefficients $a_D(f_{i,\chi})$ where $D$ ($\neq 1$) is the discriminant of a real quadratic field.

Assume first $D \not\in \mathcal{D}_{\ell}$. In this case, it follows from \cite[Theorem 4.3 2. and Corollary 10.7]{Mao} that
\begin{equation}\label{eq_vanishing_a_D}
a_D(f_{\chi}) = a_D(f_{1,\chi}) = a_D(f_{2,\chi}) = 0 \text{.}
\end{equation}

Assume now $D \in \mathcal{D}_{\ell}$. It follows from \cite[Theorem 4.3 3.]{Mao} that $a_D(f_{\chi})=0$ if and only if $L(F, \chi_D, 1)=0$, if and only if $a_D(f_{i,\chi})=0$ (for $i=1,2$). We thus assume that $a_D(f_{\chi}) \neq 0$. We first compute the (well-defined) ratio $\frac{a_D(f_{\chi})}{a_D(f_{1,\chi})}$. It follows easily from \cite[\S 2.2]{Mao} that we have
$$
\frac{a_D(f_{\chi})}{a_D(f_{1,\chi})} = \frac{a_1(f_{\chi})}{a_1(f_{1,\chi})}\cdot \left( \frac{\tilde{L}_2(\frac{i+1}{4}\cdot \frac{\mu(2^4)}{\mu(2^2)-1}\cdot e_1 + e_2 )}{\tilde{L}_2(e_1)}\right)^{-1}\cdot \frac{\tilde{L}_2^D(\frac{i+1}{4}\cdot \frac{\mu(2^4)}{\mu(2^2)-1}\cdot e_1 + e_2 )}{\tilde{L}_2^D(e_1)} \text{.}
$$
Here, as in \cite[\S 2.2]{Mao}, $\tilde{L}_2^D$ denotes a local Whittaker functional at the place $v=2$ corresponding to an additive character $\psi^D$, which we normalize as in \cite[\S 8.1 (8.3)]{Mao}. Recall that, by construction, we have $\frac{a_1(f_{\chi})}{a_1(f_{1,\chi})}=1$.

By \cite[Lemma 9.3]{Mao}, we have 
$$\tilde{L}_2(e_1) = \tilde{L}_2^D(e_1) = 1 \text{,}$$
and 
$$
\tilde{L}_2(\frac{i+1}{4}\cdot \frac{\mu(2^4)}{\mu(2^2)-1}\cdot e_1 + e_2 ) = \frac{i+1}{4}\cdot \frac{\mu(2^4)}{\mu(2^2)-1} + \frac{i+1}{4}\cdot (\mu(2^2)+\alpha\cdot \mu(2^2)) \text{.}
$$

We also get
$$
\tilde{L}_2^D(\frac{i+1}{4}\cdot \frac{\mu(2^4)}{\mu(2^2)-1}\cdot e_1 + e_2 ) = \frac{i+1}{4}\cdot \frac{\mu(2^4)}{\mu(2^2)-1} + \frac{i+1}{4}\cdot (\mu(2^2)+\alpha\cdot \mu(2^2)\cdot \chi_2(D)) \text{ if } D \equiv 1 \text{ (modulo }4\text{)} 
$$
and
$$
\tilde{L}_2^D(\frac{i+1}{4}\cdot \frac{\mu(2^4)}{\mu(2^2)-1}\cdot e_1 + e_2 ) = \frac{i+1}{4}\cdot \frac{\mu(2^4)}{\mu(2^2)-1} + \frac{i+1}{4}\cdot (\mu(2^2)-\mu(2^4)) \text{ if } D \equiv 0 \text{ (modulo }4\text{).} 
$$
Here, $\chi_2$ is the quadratic character (of conductor $8$) of $\Q_2^{\times}$ corresponding to the extension $\Q_2(\sqrt{2})$.

In conclusion, we get
$$
\frac{a_D(f_{\chi})}{a_D(f_{1,\chi})} = \frac{\frac{\mu(2^4)}{\mu(2^2)-1} + \mu(2^2)+\alpha\cdot \mu(2^2)\cdot \chi_2(D)}{\frac{\mu(2^4)}{\mu(2^2)-1} + \mu(2^2)+\alpha\cdot \mu(2^2)} \text{ if } D \equiv 1 \text{ (modulo }4\text{)} 
$$
and
$$
\frac{a_D(f_{\chi})}{a_D(f_{1,\chi})} = \frac{ \frac{\mu(2^4)}{\mu(2^2)-1} +  \mu(2^2)-\mu(2^4)}{\frac{\mu(2^4)}{\mu(2^2)-1} + \mu(2^2)+\alpha\cdot \mu(2^2)} \text{ if } D \equiv 0\text{ (modulo }4\text{).} 
$$

Since, for $D \equiv 1 \text{ (modulo }4\text{})$, we have $\chi_2(D)=1$ if $D \equiv 1 \text{ (modulo }8\text{})$ and $\chi_2(D)=-1$ if $D \equiv 5 \text{ (modulo }4\text{})$, this simplifies to give

\begin{equation}\label{eq_ratio_a_D_1_1}
a_D(f_{1,\chi}) = a_D(f_{\chi}) \text{ if } D \equiv 1 \text{ (modulo }8\text{),} 
\end{equation}

\begin{equation}\label{eq_ratio_a_D_5_1}
a_D(f_{1,\chi}) = -\frac{\alpha^3+2\alpha^2-2\alpha-2}{\alpha^3-2\alpha^2-2\alpha+2} \cdot a_D(f_{\chi}) \text{ if } D \equiv 5 \text{ (modulo }8\text{),} 
\end{equation}
and
\begin{equation}\label{eq_ratio_a_D_0_1}
a_D(f_{1,\chi}) = -2\cdot \frac{\alpha^3+2\alpha^2-2\alpha-2}{\alpha^4-6\alpha^2+4} \cdot a_D(f_{\chi})  \text{ if } D \equiv 0 \text{ (modulo }4\text{).} 
\end{equation}

By (\ref{eq_decomposition_fchi}) (and the fact that $\alpha \neq 1$, as recalled above), we get
\begin{equation}\label{eq_ratio_a_D_1_2}
a_D(f_{2,\chi}) = a_D(f_{\chi})  \text{ if } D \equiv 1 \text{ (modulo }8\text{) , } 
\end{equation}
\begin{equation}\label{eq_ratio_a_D_5_2}
a_D(f_{2,\chi}) =  \frac{\alpha^5 + \alpha^4 - 6\alpha^3 + 6\alpha - 4}{(\alpha - 1)(\alpha + 2)(\alpha^3 - 2\alpha^2 - 2\alpha + 2)}\cdot a_D(f_{\chi}) \text{ if } D \equiv 5\text{ (modulo }8\text{).} 
\end{equation}
and
\begin{equation}\label{eq_ratio_a_D_0_2}
a_D(f_{2,\chi}) = \frac{\alpha^5 - 6\alpha^3 + 4\alpha^2 + 4\alpha - 4}{(\alpha-1)(\alpha^4 - 6\alpha^2 + 4)} \cdot a_D(f_{\chi})  \text{ if } D \equiv 0 \text{ (modulo }4\text{).} 
\end{equation}

Note that the six labeled equations above also hold if $a_D(f_{\chi})=0$, as both sides are zero as recalled above.

Combining (\ref{eq_Shimura_lift}), Proposition \ref{prop_main_congruence_L}, (\ref{eq_ratio_a_D_1_1}), (\ref{eq_ratio_a_D_5_1}), (\ref{eq_ratio_a_D_0_1}),  (\ref{eq_ratio_a_D_5_2}) and (\ref{eq_ratio_a_D_0_2}), we conclude for all $D \in \mathcal{D}_{\ell}$ and $i \in \{1,2\}$, the Fourier coefficient $\frac{a_D(f_{i, \chi})}{{\chi(\sqrt{D} \text{ modulo }\mathfrak{l})}}$ is algebraic (where $\mathfrak{l}$ is any prime above $\ell$ in $\Q(\sqrt{D})$), and in fact belongs to the (at most) quadratic extension $K_F(\alpha)$ of $K_F$ in $\C$. Furthermore, using (\ref{eq_trivial_divisibility}),(\ref{eq_congruence_alpha_1}) and (\ref{eq_mu_2}), for all $D \in \mathcal{D}_{\ell}$, we get in $K_F(\alpha) \otimes_{\mathcal{O}_F} \mathcal{O}_{\mathfrak{P}_F}$:
\begin{equation}\label{eq_three_a_D_I_F}
\frac{a_D(f_{\chi})}{\chi(\sqrt{D} \text{ modulo }\mathfrak{l})}, \frac{a_D(f_{1, \chi})}{\chi(\sqrt{D} \text{ modulo }\mathfrak{l})},  \mu_2\cdot \frac{a_D(f_{2, \chi})}{\chi(\sqrt{D} \text{ modulo }\mathfrak{l})} \in I_F \cdot \mathcal{O}_{\mathfrak{P}_F} \text{.}
\end{equation}
Here, we view $\mathcal{O}_{\mathfrak{P}_F}$ as a subring of $K_F(\alpha) \otimes_{\mathcal{O}_F} \mathcal{O}_{\mathfrak{P}_F}$ in the obvious way. Let us note that (\ref{eq_three_a_D_I_F}) also hold for fundamental discriminants $D \not\in \mathcal{D}_{\ell}$, as by (\ref{eq_vanishing_a_D}) these three quantities equal zero.

Recall that we have defined in (\ref{def_F_chi}) the form $F_{\chi} \in S_{\frac{3}{2}}(\Gamma_1(4\ell^2), \chi')_{R\otimes_{\Z}\C}$ by gluing the forms $f_{\chi^{\sigma}}$ where $\sigma$ runs through $\Gal(\Q(\chi)/\Q)$. We define similarly $F_{i, \chi} \in S_{\frac{3}{2}}(\Gamma_1(4\ell^2), \chi')_{R\otimes_{\Z}\C}$ for $i=1,2$ by gluing $f_{i,\chi^{\sigma}}$. The equations (\ref{eq_Hecke_relation_f_i_chi}) and (\ref{eq_decomposition_fchi}) with $f_{\chi}$, $f_{1, \chi}$ and $f_{2, \chi}$ replaced by $F_{\chi}$, $F_{1, \chi}$ and $F_{2, \chi}$ respectively obviously hold. 

By \cite[\S 2.2]{Mao} and the choice of local Whittaker functional at $v=\ell$ in \cite[p. 367]{Mao}, we get for $\sigma, \sigma' \in \Gal(\Q(\chi)/\Q)$ and $D \in \mathcal{D}_{\ell}$:
\begin{equation}\label{eq_comparison_chi_chi'}
\begin{array}{lcl}
		a_D(f_{\chi^{\sigma}})  &=& (\frac{\chi^{\sigma}}{\chi^{\sigma'}})(\sqrt{D} \text{ modulo } \mathfrak{l})\cdot a_D(f_{\chi^{\sigma'}}), \\
		a_D(f_{i, \chi^{\sigma}})  &=& (\frac{\chi^{\sigma}}{\chi^{\sigma'}})(\sqrt{D} \text{ modulo } \mathfrak{l})\cdot a_D(f_{i,\chi^{\sigma'}}) \text{ (for }i\in \{1,2\}\text{).}
	\end{array}
\end{equation}
Note that these equations do not depend on the choice of $\mathfrak{l}$, as $\frac{\chi^{\sigma'}}{\chi^{\sigma}}$ is an even character of level $\ell$. It follows from the discussion above (\ref{eq_three_a_D_I_F}) that for all fundamental discriminant $D$ (with $D\neq 1$), we have 
$$a_D(F_{\chi}), a_D(F_{1,\chi}), a_D(F_{2,\chi}) \in R \otimes_{\Z} K_F(\alpha) \text{,}$$
and furthermore, using (\ref{eq_three_a_D_I_F}) and (\ref{eq_comparison_chi_chi'}), we conclude that we actually have

\begin{equation}\label{eq_three_a_D_I_F_final}
a_D(F_{\chi}), a_D(F_{1, \chi}),  \mu_2\cdot a_D(F_{2, \chi}) \in I_F \cdot (R\otimes_{\Z}\mathcal{O}_{\mathfrak{P}_F}) \text{,}
\end{equation}
where $R\otimes_{\Z}\mathcal{O}_{\mathfrak{P}_F}$ is considered as a subring of $R\otimes_{\Z}K_F(\alpha) \otimes_{\mathcal{O}_F}\mathcal{O}_{\mathfrak{P}_F}$.

Using (\ref{eq_Hecke_relation_f_i_chi}), (\ref{eq_decomposition_fchi}), (\ref{eq_congruence_mu_1}) and (\ref{eq_congruence_mu_2}), we conclude that 
for all positive integer $n$, we have
$$a_n(F_{\chi}) \in R \otimes_{\Z} K_F$$
and that furthremore, if $n$ is not a perfect square,
$$a_n(F_{\chi}) \in I_F\cdot( R \otimes_{\Z}\mathcal{O}_{\mathfrak{P}_F})$$
whereas if $n=m^2$ is a perfect square, 
$$a_n(F_{\chi}) - m\cdot \chi(m) \in  I_F\cdot( R \otimes_{\Z}\mathcal{O}_{\mathfrak{P}_F}) \text{.}$$
This concludes the proof of \ref{prop_main_wt_3/2_i}. 

Point \ref{prop_main_wt_3/2_ii} follows immediately from (\ref{eq_Shimura_lift}) and Proposition \ref{prop_main_congruence_L}.

Proof of \ref{prop_main_wt_3/2_iii}. We have $n = m^2\cdot \Delta$ where $2m \in \mathbf{N}$. If $m$ is an odd integer, then by (\ref{eq_T_q^2_f}) and (\ref{eq_def_T_q^2}), we see that $a_n(F_{\chi})$ is an explicit multiple of $a_{\Delta}(F_{\chi})$, which proves the claim in point \ref{prop_main_wt_3/2_iii}. 

Assume now that $m$ is an even integer. By the same argument as when $m$ is odd, we may assume without loss of generality that $m=2^a$ for some $a \geq 1$. Combining (\ref{eq_Hecke_relation_f_i_chi}), (\ref{eq_decomposition_fchi}), (\ref{eq_ratio_a_D_1_1}), (\ref{eq_ratio_a_D_5_1}), (\ref{eq_ratio_a_D_0_1}), (\ref{eq_ratio_a_D_1_2}), (\ref{eq_ratio_a_D_5_2}), and (\ref{eq_ratio_a_D_0_2}) (depending on $\Delta$ modulo $8$), we see again that $a_n(F_{\chi})$ is an explicit multiple of $a_{\Delta}(F_{\chi})$, which proves the claim in point \ref{prop_main_wt_3/2_iii}. 

Finally, assume that $m$ is half an odd integer. Again, without loss of generality, we may assume $m=\frac{1}{2}$, \ie $\Delta = 4n$. By (\ref{eq_Hecke_relation_f_i_chi}), (\ref{eq_decomposition_fchi}), (\ref{eq_ratio_a_D_0_1}), and (\ref{eq_ratio_a_D_0_2}), we get
\begin{align*}
a_{\Delta}(F_{\chi}) &= a_n(T_{2^2}F_{\chi}) 
\\& = \mu_1\cdot 2\chi(2)\alpha^{-1}\cdot a_n(F_{1, \chi})+ \mu_2\cdot 2\chi(2)\cdot (\frac{2}{\alpha})^{-1}\cdot a_n(F_{2, \chi})
\\& = \chi(2)\alpha^{-1}\cdot (2\mu_1\cdot a_n(F_{1, \chi})+\mu_2\cdot \alpha^2\cdot a_n(F_{2,\chi}))
\\& = 2\chi(2)\alpha^{-1}\cdot a_n(F_{\chi}) + \mu_2\cdot (\alpha^2-2)\cdot a_n(F_{2,\chi})
\\& = 2\chi(2)\alpha^{-1}\cdot a_n(F_{\chi}) + \mu_2\cdot (\alpha^2-2)\cdot \chi(2)^{-1}\alpha^{-1}\cdot a_{\Delta}(F_{2, \chi})
\\& = 2\chi(2)\alpha^{-1}\cdot a_n(F_{\chi}) + \chi(2)^{-1}\cdot\frac{(\alpha-1)(\alpha+2)(\alpha^2-2)}{\alpha(\alpha^2+2\alpha-2)}\cdot  a_{\Delta}(F_{2, \chi})
\\&= 2\chi(2)\alpha^{-1}\cdot a_n(F_{\chi}) +  \chi(2)^{-1}\cdot\frac{(\alpha-1)(\alpha+2)(\alpha^2-2)}{\alpha(\alpha^2+2\alpha-2)}\cdot \frac{(\alpha^5 - 6\alpha^3 + 4\alpha^2 + 4\alpha - 4)}{(\alpha-1)(\alpha^4 - 6\alpha^2 + 4)} \cdot  a_{\Delta}(F_{\chi}) \text{.}
\end{align*}
Thus, we get

$$
(1-\chi(2)^{-1}\cdot\frac{(\alpha+2)(\alpha^2-2)(\alpha^5 - 6\alpha^3 + 4\alpha^2 + 4\alpha - 4)}{\alpha(\alpha^2+2\alpha-2)(\alpha^4 - 6\alpha^2 + 4)})\cdot a_{\Delta}(F_{\chi}) = 2\chi(2)\alpha^{-1}\cdot a_n(F_{\chi}) \text{.}
$$

This concludes the proof of \ref{prop_main_wt_3/2_iii}, and of the theorem.
\end{proof}

The following corollary of Theorem \ref{prop_main_wt_3/2} will be the key input for our proof of Theorem \ref{main_thm_paper}. 
\begin{cor}\label{cor_main_wt_3/2}
Let $\chi : (\Z/\ell\Z)^{\times}\rightarrow \C^{\times}$, $\chi' : (\Z/4\ell^2\Z)^{\times} \rightarrow \C^{\times}$ and $R = \Z[\chi] = \Z[\chi']$ be as defined in the notation of this section. Let $Q$ be a positive squarefree integer with $\gcd(Q,2p\ell)=1$. 

There exists $$g_{\chi,Q} \in S_{\frac{3}{2}}(\Gamma_1(4\ell^2Q^2), \chi')_{R/pR}$$ 
such that, for all $n\geq 1$ with $\left(\dfrac{n}{Q}\right) \neq -1$, we have $a_n(g_Q)=0$, and furthermore the following holds.
\begin{enumerate}[label=(\roman*)]
\item\label{cor_main_wt_3/2_i} For all prime $r \nmid 2Q\ell$, we have $T_{r^2}(g_{\chi, Q}) = \chi(r)\cdot(r+1)\cdot g_{\chi, Q}$.
\item\label{cor_main_wt_3/2_ii} For all $\Delta \in \mathcal{D}_{\ell}$ with $\left(\dfrac{\Delta}{Q}\right) = -1$, we have 
$$a_{\Delta}(g_{\chi, Q}) = \pm\chi(\sqrt{\Delta} \text{ modulo }\mathfrak{l})\cdot h(\Delta)\cdot \log_{\l, p}(\epsilon_{\Delta})  
\text{ (modulo }p\text{)}$$
for any prime $\mathfrak{l}$ above $\ell$ in $K=\Q(\sqrt{\Delta})$ and some sign $\pm$ may depending \emph{a priori} on $\Delta$. (The right-hand side is independent of the choice of $\mathfrak{l}$.)
\item\label{cor_main_wt_3/2_iii} Let $n\geq 1$ be such that $\left(\dfrac{n}{Q}\right) = -1$, and denote by $\Delta$ the discriminant of the real quadratic field $\Q(\sqrt{n})$. If $a_n(g_{\chi, Q})\neq 0$, then $a_{\Delta}(g_{\chi, Q}) \neq 0$ and $\Delta \in \mathcal{D}_{\ell}$.
\end{enumerate}
\end{cor}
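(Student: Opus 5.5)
The plan is to build $g_{\chi,Q}$ from the weight $\frac{3}{2}$ form $F_{\chi}$ of Theorem \ref{prop_main_wt_3/2}, in three steps: reduce modulo $I_F$, project onto the coefficients $n$ with $\left(\frac{n}{Q}\right)=-1$ by twisting, and then check the Hecke and coefficient properties.

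\textbf{Step 1: reduction modulo $p$.} By Theorem \ref{prop_main_wt_3/2} \ref{prop_main_wt_3/2_i}, $F_{\chi}-\theta_{\chi}$ has coefficients in $I_F\cdot(R\otimes_{\Z}\mathcal{O}_{\mathfrak{P}_F})$. Apply the map (\ref{eq_I_F/I_F^2}) of Lemma \ref{lemma_I/I^2_F}, tensored with $R$, coefficientwise. This produces a $q$-expansion
$$G:=\sum_{n} \widetilde{(a_n(F_{\chi})-a_n(\theta_{\chi}))}\,q^n \in (R/pR)[[q]].$$
To see that $G$ is a genuine mod $p$ modular form of level $\Gamma_1(4\ell^2)$ and character $\chi'$, I would proceed as follows.
\begin{itemize}
\item First write $I_F\mathcal{O}_{\mathfrak{P}_F}/I_F^2\mathcal{O}_{\mathfrak{P}_F}\to\Z/p\Z$ as an $\mathcal{O}_F$-linear functional after choosing a generator $\varpi$ of the principal ideal $I_F\mathcal{O}_{\mathfrak{P}_F}$.
\item Then $(F_{\chi}-\theta_{\chi})/\varpi$ is a form with $\mathcal{O}_{\mathfrak{P}_F}$-integral coefficients.
\item Its reduction modulo $\mathfrak{P}_F$ is a mod $p$ form, by the $q$-expansion principle for half-integral weight forms \cite{Ramsey}; here we use that $4\ell^2$ is invertible modulo $p$ since $p\ne 2,\ell$.
\end{itemize}

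\textbf{Step 2: twisting to the coefficients with $\left(\frac{n}{Q}\right)=-1$.} Set
$$g_{\chi,Q}:=\frac{1}{2}\bigl(G\otimes\mathbf{1}_Q - G\otimes\left(\tfrac{\cdot}{Q}\right)\bigr),$$
where $G\otimes\mathbf{1}_Q$ removes the coefficients with $\gcd(n,Q)>1$ and $G\otimes\left(\frac{\cdot}{Q}\right)$ twists by the quadratic character modulo $Q$. This is legitimate because $p$ is odd.
\begin{itemize}
\item Twisting by a character of modulus $Q$ raises the level to $4\ell^2Q^2$ \cite[Lemma 3.6]{Shimura_annals}. Its effect on the Nebentypus is by the square of $\left(\frac{\cdot}{Q}\right)$, which is trivial on units, so the character stays $\chi'$.
\item The coefficients of $g_{\chi,Q}$ are those of $G$ when $\left(\frac{n}{Q}\right)=-1$, and $0$ otherwise. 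This gives the vanishing statement.
\item No perfect square $n$ satisfies $\left(\frac{n}{Q}\right)=-1$, so the $\theta_{\chi}$ part never contributes. Hence $a_n(g_{\chi,Q})=\widetilde{a_n(F_{\chi})}$ exactly when $\left(\frac{n}{Q}\right)=-1$.
\end{itemize}

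\textbf{Step 3: the three properties.}
\begin{itemize}
\item Point \ref{cor_main_wt_3/2_ii} then follows from Theorem \ref{prop_main_wt_3/2} \ref{prop_main_wt_3/2_ii}.
\item Point \ref{cor_main_wt_3/2_iii} follows from Theorem \ref{prop_main_wt_3/2} \ref{prop_main_wt_3/2_iii}. The only extra check is that $\Delta$ also satisfies $\left(\frac{\Delta}{Q}\right)=-1$. This holds because $n=m^2\Delta$ with $m$ prime to $Q$, as $\left(\frac{n}{Q}\right)\neq 0$.
\item For point \ref{cor_main_wt_3/2_i}, let $r\nmid 2Q\ell$. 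Proposition \ref{prop_Shimura_lift} (\ref{prop_Shimura_lift_ii}) together with $a_r(F)\equiv r+1 \pmod{I_F}$ gives
$$T_{r^2}(F_{\chi}-\theta_{\chi}) = \chi(r)a_r(F)(F_{\chi}-\theta_{\chi})+\chi(r)(a_r(F)-(r+1))\theta_{\chi},$$
using that $\theta_{\chi}$ has eigenvalue $\chi(r)(r+1)$ (Remark \ref{rem_theta_series}).
\item The second term has coefficients in $I_F$, supported on squares. After applying the map (\ref{eq_I_F/I_F^2}), the first term becomes $\chi(r)(r+1)G$, since $a_r(F)-(r+1)\in I_F$ kills $I_F$ modulo $I_F^2$. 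The second term survives only on square indices, and these are removed by the twist projection.
\item Finally, $T_{r^2}$ commutes with the projection of Step 2 up to the factor $\left(\frac{r^2}{Q}\right)=1$, using formula (\ref{eq_def_T_q^2}): the condition $\left(\frac{n}{Q}\right)=-1$ is preserved under $n\mapsto nr^2$ and $n\mapsto n/r^2$.
\end{itemize}

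\textbf{Main obstacle.} I expect the hardest point to be Step 1: making rigorous that "divide by the generator of $I_F$ and reduce" yields a mod $p$ modular form. The difficulty is that $\mathcal{O}_F$ may be non-maximal and the coefficients lie in $R\otimes K_F$. My first attempt would be to pass to the normalization of $\mathcal{O}_{\mathfrak{P}_F}$ and its residue field, and to note that the functional factors through $I_F/\mathfrak{P}_FI_F$. That residue field contains $\mathbf{F}_p$, which allows descending the coefficients to $R/pR$.
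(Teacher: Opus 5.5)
Your argument is correct and is essentially the paper's proof with the two steps done in the opposite order. The paper projects first, in characteristic zero, setting $h_{\chi,Q}=\frac{1}{2}\bigl(F_{\chi}-F_{\chi}\otimes\left(\frac{\cdot}{Q}\right)-B_QF_{\chi}\bigr)$; this is an exact $T_{r^2}$-eigenform with eigenvalue $\chi(r)a_r(F)$, its coefficients are non-square-indexed and hence already in $I_F$, and only then is (\ref{eq_I_F/I_F^2}) applied, so point (i) is immediate and your $\theta_{\chi}$ error-term bookkeeping is unnecessary. Conversely, your use of the principal character modulo $Q$ instead of $B_Q$ is slightly more accurate: $B_Q$ kills all indices with $\gcd(n,Q)>1$ only when $Q$ is prime, whereas the corollary allows any squarefree $Q$.
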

\begin{proof}
Let $F$ and $F_{\chi}$ be as in Theorem \ref{prop_main_wt_3/2}. Define 
$$h_{\chi, Q} := \frac{1}{2}\cdot (F_{\chi}-F_{\chi}\otimes \left(\dfrac{\cdot}{Q}\right) - B_QF_{\chi})\in S_{\frac{3}{2}}(\Gamma_1(4\ell^2Q^2), \chi')_{R\otimes_{\Z}\C} \text{,}$$
where
$$F_{\chi}\otimes \left(\dfrac{\cdot}{Q}\right) := \sum_{n\geq 1} a_n(F_{\chi})\cdot \left(\dfrac{n}{Q}\right)\cdot q^n \in S_{\frac{3}{2}}(\Gamma_1(4\ell^2Q^2), \chi')_{R\otimes_{\Z} \C}$$
is the twist of $F_{\chi}$ by the primitive Dirichlet character $\left(\dfrac{\cdot}{Q}\right)$ of level $Q$ and
$$B_QF_{\chi}:= \sum_{n\geq 1} a_{nQ}(F_{\chi})q^{nQ} \in S_{\frac{3}{2}}(\Gamma_1(4\ell^2Q^2), \chi')_{R\otimes_{\Z} \C}$$
(\cf \eg \cite[\S 3]{Bruinier} for the definition of the twist and $B_Q$ operators). 

Thus, we have $a_n(h_{\chi, Q}) = a_n(F_{\chi})$ if $\left(\dfrac{n}{Q}\right)=-1$ and $a_n(h_{\chi, Q})=0$ else. Furthermore, by (\ref{eq_T_q^2_f}) and (\ref{eq_def_T_q^2}), for all prime $r$ with $r\nmid 2Q\ell$, we have 
$$T_{r^2}(h_{\chi, Q})=\chi(r)\cdot a_r(F)\cdot h_{\chi, Q} \text{.}$$ 
By Theorem \ref{prop_main_wt_3/2} \ref{prop_main_wt_3/2_i}, for all $n\geq 1$ we have 
$$a_n(h_{\chi, Q}) \in I_F\cdot (R \otimes_{\Z} \mathcal{O}_F \otimes_{\mathcal{O}_F} \mathcal{O}_{\mathfrak{P}_F}) \text{.}$$ We then let
$$g_{\chi, Q} := \sum_{n\geq 1} \widetilde{a_n(h_{\chi, Q})}\cdot q^n \in S_{\frac{3}{2}}(\Gamma_1(4\ell^2Q^2), \chi')_{R/pR} \text{,} $$
where $\widetilde{a_n(h_{\chi, Q})}$ denotes the image of $a_n(h_{\chi, Q})$ in $R/pR$ via the map (\ref{eq_I_F/I_F^2}). Point \ref{cor_main_wt_3/2_i} of the corollary then follows from the fact that $a_r(F) \equiv r+1 \text{ (modulo }I_F \text{)}$, while points \ref{cor_main_wt_3/2_ii} and point \ref{cor_main_wt_3/2_iii} follows from Theorem \ref{prop_main_wt_3/2} \ref{prop_main_wt_3/2_ii} and \ref{prop_main_wt_3/2_iii} respectively.
\end{proof}

\section{Proof of the main theorem}
We keep the notation of section \ref{section_half_int_wt}.

The goal of this section is to prove Theorem \ref{main_thm_paper}. Recall that, in the setting of Theorem \ref{main_thm_paper}, there is a squarefree integer $Q$ such that $\gcd(Q, 2p\ell)=1$, $Q \not\equiv \pm 1 \text{ (modulo }p\text{)}$ and there exists $\Delta_0 \in  \mathcal{D}_{\ell}$ such that $\left(\dfrac{\Delta_0}{Q}\right)=-1$ and
$h(\Delta_0)\cdot \log_{\l, p}(\epsilon_{\Delta_0}) \neq 0$,
for any prime $\mathfrak{l}\mid \ell$ in $\Q(\sqrt{\Delta_0})$.

Let 
$$g_{\chi, Q}\in S_{\frac{3}{2}}(\Gamma_1(4\ell^2Q^2), \chi)_{R/pR}$$ as in Corollary \ref{cor_main_wt_3/2}. 
By Corollary \ref{cor_main_wt_3/2} \ref{cor_main_wt_3/2_ii}, we conclude that $a_{\Delta_0}(g_{\chi, Q}) \neq 0$ (in $R/pR$). We let
$$\supp(g_{\chi, Q}) := \{n\geq 1 \text{ such that } a_n(g_{\chi, Q})\neq 0 \text{ (in }R/pR \text{)}\} \text{,}$$
which is non-empty.

The following result relies crucially on a result of Bruinier--Ono \cite[Theorem 3.1]{Bruinier_Ono} (\cf also \cite[Theorem 1]{Bruinier}).

\begin{lem}\label{lem_no_finite_support}
There does not exist finitely many squarefree integers $n_1$, ..., $n_t$ (for some $t\geq 1$) such that
$$\supp(g_{\chi, Q}) \subset \bigcup_{i=1}^t\{n_im^2, m \in \mathbf{N}\} \text{.}$$
Consequently, for any $C>1$ there exists an integer $n_0 \in \supp(g_{\chi, Q})$ such that $n_0=n_0'm^2$ for some $m\in \mathbf{N}$ and $n_0'$ squarefree and divisible by a prime $>C$.
\end{lem}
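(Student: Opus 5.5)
Argue by contradiction. Suppose $\supp(g_{\chi,Q}) \subset \bigcup_{i=1}^t\{n_im^2\}$ for finitely many squarefree $n_1,\dots,n_t$. Then $g_{\chi,Q}$ is a nonzero weight $\frac{3}{2}$ cuspform modulo $p$, with coefficients in $R/pR$, whose support is contained in finitely many square classes. We want to apply the theorem of Bruinier--Ono \cite[Theorem 3.1]{Bruinier_Ono} (\cf \cite[Theorem 1]{Bruinier}).

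Roughly, that theorem says the following. Let $g$ be a half-integral weight cuspform modulo a prime $\mathfrak{p}$ above $p$ that is an eigenform of the $T_{r^2}$ for almost all primes $r$, and whose support lies in finitely many square classes. Then $(p-1)/2\cdot$(weight) $\equiv$ some restricted value. More precisely, for all but finitely many primes $r$ with $r \equiv -1 \pmod{4N}$ or suitable residue conditions, the Hecke eigenvalue must satisfy
$$\lambda_r \equiv \epsilon_r\, \chi(r)\left(r^{k-1}+r^{k-2}\cdot r\right),$$
with the shape $\lambda_r \equiv \pm(r^{\lambda}+r^{\lambda-1})$ and $\lambda\in\{\frac{p-1}{2}, \ldots\}$ forced modulo $p$. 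In our weight, $k=\frac32$ ($\lambda=1$), and the eigenvalue is $\chi(r)(r+1)$ by Corollary \ref{cor_main_wt_3/2} \ref{cor_main_wt_3/2_i}. Bruinier--Ono's conclusion is that the weight must be congruent to $\frac{p}{2}$ or $\frac{p+2}{2}$ type, i.e. $\lambda \equiv \frac{p-1}{2}$ or $\frac{p+1}{2}\pmod{p-1}$. For $\lambda=1$ and $p\ge5$ this fails, which gives the contradiction.

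Concretely, I would first reduce modulo a prime $\mathfrak{p}$ of $R$ above $p$ such that the image of $g_{\chi,Q}$ is nonzero; this is possible since $R/pR=\prod R/\mathfrak{p}^{e}$, and after multiplying by a suitable power of a uniformizer we may assume the image in $R/\mathfrak{p}$ is nonzero while keeping the support condition. Next I would check the hypotheses: the level is $4\ell^2Q^2$, coprime to $p$, and the eigenform property holds for $r\nmid 2Q\ell p$. Then I would run the Bruinier argument directly. Pick a prime $r\nmid 2Q\ell p\, n_1\cdots n_t$ with $\left(\frac{n_i}{r}\right)=-\epsilon$ prescribed for all $i$. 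This can be done by Dirichlet/Chebotarev, since only finitely many $n_i$ occur. Apply (\ref{eq_def_T_q^2}) to a coefficient $n=n_im^2$ with $r\nmid m$ and $a_n\neq0$. The terms $a_{nr^2}$ and $a_{n/r^2}$ are handled by the support hypothesis together with an induction on the $r$-adic valuation, using $T_{r^2}$ iterated. This yields $\chi(r)(r+1)\equiv \chi(r)\left(\frac{n}{r}\right)$, so $r+1\equiv \pm1 \pmod{\mathfrak p}$. Since $r$ is free in an arithmetic progression (coprime to $p$ conditions are independent), we can choose $r\equiv 2\pmod p$ as well. Then $3\equiv\pm1$ is impossible for $p\ge5$.

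The main obstacle is the step handling $a_{nr^2}$. One needs a version that kills it, e.g. choosing $n$ with $a_n\ne0$ but exploiting that $a_{n r^{2}}$ lies in the same square class; Bruinier's trick uses both $T_{r^2}$ relations and primes $r$ with all $\left(\frac{n_i}{r}\right)$ fixed. I would simply invoke \cite[Theorem 3.1]{Bruinier_Ono} there rather than redo it. The ``consequently'' part follows at once. Given $C$, take $n_1,\dots,n_t$ to be all squarefree integers whose prime factors are all $\le C$; this is a finite set. By the first part, some $n_0\in\supp$ has squarefree part outside this set, hence divisible by a prime $>C$.
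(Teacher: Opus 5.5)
Your skeleton matches the paper's: argue by contradiction, use Bruinier--Ono's identity, and pick an auxiliary prime $r$ by Dirichlet. But there is a genuine gap: the step that produces the contradiction is missing. What Bruinier--Ono's Theorem 3.1 actually gives is this: if $\left(\frac{n}{r}\right)\in\{0,\epsilon_r\}$ for every $n$ in the support, then
\[
(r-1)\,T_{r^2}g=\epsilon_r\,\chi(r)(r+1)(r-1)\,g .
\]
For $\epsilon_r=+1$ this agrees exactly with the eigenvalue $\chi(r)(r+1)$ of Corollary~\ref{cor_main_wt_3/2}\ref{cor_main_wt_3/2_i}, so it yields nothing. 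A contradiction needs $\epsilon_r=-1$, i.e.\ a prime $r$ with $\left(\frac{n_i}{r}\right)=-1$ for \emph{all} $i$ simultaneously, together with $r\not\equiv\pm1\pmod p$. Then $2\chi(r)(r^2-1)g=0$ forces $g=0$.

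For an arbitrary finite set of squarefree $n_i$, no such $r$ need exist. The characters $\left(\frac{n_i}{\cdot}\right)$ can be dependent (take $n_3=n_1n_2$, or $n_1=1$). Also, when $p\mid n_i$ the condition on $r$ modulo $p$ is entangled with $\left(\frac{n_i}{r}\right)$. So neither ``Dirichlet/Chebotarev since only finitely many $n_i$ occur'' nor ``the conditions mod $p$ are independent'' is justified.

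The missing idea is the role of $Q$. Since $a_n(g_{\chi,Q})=0$ unless $\left(\frac{n}{Q}\right)=-1$, you may assume $\left(\frac{n_i}{Q}\right)=-1$ for every $i$. By reciprocity, the conditions $\left(\frac{n_i}{r}\right)=-1$ are congruences modulo $4n_i$. The integer $Q$ itself solves the whole system modulo $\mathrm{lcm}(4n_1,\dots,4n_t,p)$, because by hypothesis $Q\not\equiv\pm1\pmod p$. Dirichlet's theorem then supplies the prime $r$. This is exactly where the hypothesis $Q\not\equiv\pm1\pmod p$ of Theorem~\ref{main_thm_paper} enters, and your proposal never uses it.

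Your first paragraph's version of Bruinier--Ono, a weight obstruction of the form $\lambda\equiv\frac{p\pm1}{2}\pmod{p-1}$, cannot close the argument. The theta series $\theta_\chi$ is a nonzero weight $\frac32$ form mod $p$ with $T_{r^2}$-eigenvalues $\chi(r)(r+1)$ and support in the single square class of $1$. Any argument using only ``nonzero, weight $\frac32$, these eigenvalues, finitely many square classes'' would therefore prove $\theta_\chi\equiv0$, which is absurd. The restriction $\left(\frac{n}{Q}\right)=-1$ on the support is what rules out this theta-type behaviour.

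Similarly, your naive derivation $r+1\equiv\left(\frac{n}{r}\right)$ silently assumes $a_{nr^2}=0$, which is false in general (again, look at $\theta_\chi$). Deducing the ``consequently'' part from the first part, via the squarefree integers with all prime factors $\le C$, is fine.
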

\begin{proof}
For the sake of a contradiction, assume that there are such (pairwise distincts) squarefree integers $n_1$, ..., $n_t$. Since $\supp(g_Q)$ consists of integers $n$ satisfying $\left(\dfrac{n}{Q}\right)=-1$, we can and do assume that for all $i \in \{1, ..., t\}$, we have $\left(\dfrac{n_i}{Q}\right)=-1$.
\begin{claim}
There exists a prime $r$ such that $r\nmid 2Qp\ell$,
$\left(\dfrac{n_i}{r}\right)=-1$ for all $i \in \{1, ..., t\}$ and $r \not\equiv \pm 1 \text{ (modulo }p\text{)}$. 
\end{claim}
Proof of the claim. By the quadratic reciprocity law, the condition $\left(\dfrac{n_i}{r}\right)=-1$ is equivalent to congruences for $r$ modulo $4n_i$. Together with the condition $r \not\equiv \pm 1 \text{ (modulo }p\text{)}$, we need to solve a system of congruences in $r$. This system has a solution given by $Q$, since we have $Q\not\equiv \pm 1 \text{ (modulo }p\text{)}$ and $\left(\dfrac{n_i}{Q}\right)=-1$ for all $i$. By Dirichlet's theorem on primes in arithmetic progressions, the claim follows.

Let $r$ be as in the claim. It follows from a slight generalization of \cite[Theorem 3.1]{Bruinier_Ono} that we have
$$(r-1)\cdot T_{r^2}(g_{\chi, Q}) = -\chi(r)\cdot (r+1)\cdot (r-1) \cdot g_{\chi, Q}\text{.}$$
Indeed, \cite[Theorem 3.1]{Bruinier_Ono} applies, as stated, to forms in $S_{\frac{3}{2}}(\Gamma_1(4\ell^2Q^2), \chi')_{\Z/p\Z}$ where $\chi'$ is a \emph{quadratic} character. However, the proof of that theorem purely relies on commutation relations between certain operators (especially the Atkin--Lehner involution), and can be applied identically to a form with coefficients in $R/pR$ instead of $\Z/p\Z$, and for a general (non necessarily quadratic) Nebentype $\chi'$. The only necessary change is to replace $\chi'$ with $\overline{\chi'}$ at certain places, but it is straightforward to check that the end result, \ie the identity for $(r-1)T_{r^2}f$, still holds (here, $p$ and $M$ in \cite[Theorem 3.1]{Bruinier_Ono} should be replaced by our $r$ and $p$ respectively).

Since $r \not\equiv \pm 1 \text{ (modulo }p\text{)}$ and $T_{r^2}(g_{\chi, Q}) = \chi(r)\cdot (r+1)\cdot g_{\chi, Q}$ (by Corollary \ref{cor_main_wt_3/2} \ref{cor_main_wt_3/2_i}), we get $g_{\chi, Q}=0$, which is not possible since, as explained above, our running assumption implies $\supp(g_{\chi, Q}) \neq \emptyset$.
\end{proof}

We let 
$$C=2\ell(\ell+1)Q\prod_{q\mid Q \atop q \text{ prime}}(q+1) \text{.}$$
By Lemma \ref{lem_no_finite_support}, there exists $n_0\geq 1$ such that $a_{n_0}(g_Q) \neq 0$ and whose squarefree part $n_0'$ is divisible by a prime factor $>C$. In what follows, we fix such a choice of $n_0$.
Let $S$ be the set of primes $r>C$ with $\gcd(r, Q\ell)=1$, such that for all $n\leq C$ we have $\left(\dfrac{n}{r}\right)=1$, and such that $\left(\dfrac{n_0}{r}\right)=-1$. It follows from Dirichlet's theorem and the existence of $n_0'$ as above that $S$ has positive density.

The following is the key lemma of our proof. Its proof takes inspiration in the technique of \cite{Ono} and \cite{Byeon}.

\begin{lem}\label{key_lem_main_thm}
For every $r \in S$, there exists $n\leq Cr$ coprime with $r$ such that $a_{nr}(g_{\chi, Q})\neq 0$.
\end{lem}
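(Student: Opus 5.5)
Fix $r \in S$. The plan follows Ono and Byeon: build from $g_{\chi,Q}$ an auxiliary mod $p$ form whose nonzero coefficients are exactly the $a_{nr}(g_{\chi,Q})$ with $r\nmid n$, and then apply Sturm's bound.

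\emph{Step 1: the Hecke relation at $r$.} Since $r\nmid 2Q\ell$ (indeed $r>C$), Corollary \ref{cor_main_wt_3/2} \ref{cor_main_wt_3/2_i} gives $T_{r^2}g_{\chi,Q}=\chi(r)(r+1)g_{\chi,Q}$. By (\ref{eq_def_T_q^2}) this reads, for every $n$,
$$a_{nr^2}+\chi(r)\left(\tfrac{n}{r}\right)a_n+r\chi(r^2)a_{n/r^2}=\chi(r)(r+1)a_n .$$

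\emph{Step 2: the auxiliary form.} Let $g=g_{\chi,Q}$ and set
$$G:=g\otimes\left(\tfrac{\cdot}{r}\right)-\ldots,$$
or more precisely take the combination
$$G:=T_{r^2}g-\chi(r)(r+1)g+\chi(r)\, g\otimes\left(\tfrac{\cdot}{r}\right).$$
This is a form of level $4\ell^2Q^2r^2$ over $R/pR$, built only from $g$, its twist and Hecke operators. Its $n$th coefficient equals $a_{nr^2}+r\chi(r^2)a_{n/r^2}$ when $r\nmid n$ adjusted. A cleaner route is the operator $g\mapsto g-\left(g\otimes\left(\tfrac{\cdot}{r}\right)\right)\otimes\left(\tfrac{\cdot}{r}\right)$. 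This kills all coefficients prime to $r$ and leaves $\sum_{r\mid n}a_nq^n$. Next, twist the $U_r$-type piece by $\left(\tfrac{\cdot}{r}\right)$ to isolate the indices $nr$ with $r\nmid n$. The resulting form is
$$G_r:=\sum_{r\nmid n}a_{nr}(g)\,q^{nr},$$
which lies in $S_{3/2}(\Gamma_1(4\ell^2Q^2r^2),\chi'')_{R/pR}$ after using $U_r$ followed by $V_r$ (equivalently, $B_r$ composed with $U_r$), plus one twist to kill the $r^2\mid$ part. Rather than isolate the $r\nmid n$ part by twisting, one can use Step 1 to express $a_{nr^3}$ in terms of $a_{nr}$, which shows this part is determined by the $r\nmid n$ part.

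\emph{Step 3: nonvanishing of $G_r$.} Here $n_0$ and the definition of $S$ enter. Because $\left(\tfrac{n_0}{r}\right)=-1$, the index $n_0$ is prime to $r$ and $a_{n_0}(g)\neq0$. Twisting $g$ by $\left(\tfrac{\cdot}{r}\right)$ therefore changes it, so $g\otimes\left(\tfrac{\cdot}{r}\right)\neq\pm g$. I expect to combine this with Step 1 to conclude that $\sum_{r\mid n}a_n q^n\neq 0$. The intended argument is as in Bruinier's theorem: if all $a_{nr}$ vanished, then $g$ would be supported on $n$ with $r\nmid n$. The relation from Bruinier--Ono used in Lemma \ref{lem_no_finite_support} (the $(r-1)T_{r^2}$ identity) would then force $\chi(r)(r+1)\equiv\pm\chi(r)(r+1)\cdot(\text{the sign } \left(\tfrac{n}{r}\right))$ on both residue classes. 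Since $g$ has support in both classes (the squares in class $+1$ via $n\le C$, and $n_0$ in class $-1$), this would give $r+1\equiv0$ or $2\equiv 0 \bmod p$. The congruence $r\equiv-1$ must then be excluded, possibly by strengthening the conditions defining $S$. This step is the main obstacle, and I would first try to mimic Byeon's argument directly.

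\emph{Step 4: Sturm.} Once $G_r\neq0$, Sturm's bound for level $4\ell^2Q^2r^2$ and weight $3/2$ gives some $nr$ with $r\nmid n$ and $a_{nr}\neq0$, with $nr\ll \ell^2Q^2r^2\prod(1+1/q)$. This bound is of order $r^2$, not $Cr$, so it is too weak as stated. The fix is to apply Sturm not to $G_r$ itself but to $G_r\mid U_r$, or to a trace down to level $4\ell^2Q^2 r$. The congruence-subgroup index is then $\asymp \ell(\ell+1)Q\prod(q+1)\,r$, which gives $n\le Cr$, matching the shape of $C$. Making this level reduction rigorous is the second delicate point.
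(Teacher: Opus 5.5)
Write $g=g_{\chi,Q}$. Your proposal has a genuine gap: it never establishes the nonvanishing in Step 3, and the level reduction in Step 4 is not justified.

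\textbf{Step 3.} You infer that if $a_{nr}(g)=0$ for all $n$ prime to $r$, then $g$ is supported on indices prime to $r$. This is false. The Hecke relation of Step 1 only propagates the vanishing to indices of odd $r$-adic valuation, while for instance $a_{n_0r^2}(g)=\chi(r)(r+2)a_{n_0}(g)$ need not vanish. The Bruinier--Ono identity also requires the support of $g$ to lie in a single set $\{n : \left(\frac{n}{r}\right)\in\{0,\epsilon\}\}$, which is not the situation here. Finally, your sketch ends up needing $r\not\equiv -1 \pmod p$, and the definition of $S$ does not provide this. (Incidentally, the first $G$ in Step 2 is just $\chi(r)\, g\otimes\left(\frac{\cdot}{r}\right)$, since $T_{r^2}g=\chi(r)(r+1)g$.)

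\textbf{Step 4.} As you observe, Sturm applied to $\sum_{r\nmid n}a_{nr}(g)q^n$, which a priori only has level $4\ell^2Q^2r^2$, yields $n\ll r^2$, not $n\le Cr$. The suggested trace down to level $4\ell^2Q^2r$ does not preserve the coefficients in question and could well be zero, so it does not repair this.

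\textbf{The missing idea.} Argue by contradiction using two forms that already live at level $4\ell^2Q^2r$ (by Shimura's Propositions 1.3 and 1.5): $g_1=\sum_n a_{nr}(g)q^n$ and $g_2=\sum_n a_n(g)q^{nr}$. Compare $g_1$ with $r\chi(r)g_2$. Suppose $a_{nr}(g)=0$ for all $n\le Cr$ prime to $r$. Then every coefficient of $g_1-r\chi(r)g_2$ of index at most $Cr$ vanishes:
\begin{itemize}
\item at indices prime to $r$, both forms vanish;
\item there is no index divisible by $r^2$, since $r>C$;
\item at indices $mr$ with $m\le C$, the Hecke relation gives $a_{mr^2}(g)=\chi(r)\bigl(r+1-\left(\frac{m}{r}\right)\bigr)a_m(g)=r\chi(r)a_m(g)$, precisely because $r\in S$ forces $\left(\frac{m}{r}\right)=1$ for all $m\le C$.
\end{itemize}
That last point is exactly the role of this condition in the definition of $S$, and your proposal never really uses it.

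Since $Cr\geq 1+\ell(\ell+1)(r+1)\prod_{q\mid Q}q(q+1)$, the Sturm bound at level $4\ell^2Q^2r$ gives $g_1=r\chi(r)g_2$. Now compare coefficients at $n_0r^3$, using $\left(\frac{n_0}{r}\right)=-1$. This gives $\chi(r^2)\bigl((r+1)(r+2)-r\bigr)a_{n_0}(g)=\chi(r^2)\,r(r+2)\,a_{n_0}(g)$, that is, $2a_{n_0}(g)=0$, which contradicts the choice of $n_0$. This single argument gives both the nonvanishing and the bound $n\le Cr$. It needs no appeal to Bruinier--Ono and no exclusion of $r\equiv -1\pmod p$.
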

\begin{proof}
Assume, for the sake of a contradiction, that for all $n\leq Cr$ coprime to $r$, we have $a_{nr}(g_{\chi, Q})=0$. 

Let 
$$g_1 = \sum_{n\geq 1} a_{nr}(g_{\chi, Q})q^n$$
and
$$g_2 = \sum_{n\geq 1} a_n(g_{\chi, Q})q^{nr}.$$
By \cite[Propositions 1.3 and 1.5]{Shimura_annals}, both $g_1$ and $g_2$ are in $S_{3/2}(\Gamma_1(4\ell^2Q^2r),\chi'\cdot \left(\dfrac{r}{\cdot}\right))_{R/pR}$.

Our assumption is equivalent to $a_n(g_1)=0$ for all $n\leq C r$ coprime to $r$, and by construction, we have $a_n(g_2)=0$.  In particular, for $n\leq C r$ coprime to $r$, we have:
\begin{equation}\label{eq_coeff_n_nell^2_0}
a_{n}(g_1) = r\cdot \chi(r)\cdot a_{n}(g_2) \text{.}
\end{equation}

Recall (\cf \eg \cite[\S 3]{Bruinier}) that 
$$a_n(T_{r^2}g_{\chi, Q}) = a_{nr^2}(g)+\chi(r)\left(\dfrac{n}{r}\right)a_n(g_{\chi, Q})+r\cdot\chi(r^2)\cdot a_{\frac{n}{r^2}}(g_{\chi, Q}) \text{.}$$
By Corollary \ref{cor_main_wt_3/2} \ref{cor_main_wt_3/2_i}, we get, for all $n\geq 1$:
\begin{equation}\label{eq_coeff_n_nell^2_1}
a_{nr^2}(g_{\chi, Q}) = (\chi(r)\cdot(r+1)-\chi(r)\left(\dfrac{n}{r}\right))\cdot a_n(g_{\chi, Q}) - r\cdot \chi(r^2)\cdot a_{\frac{n}{r^2}}(g_{\chi, Q}).
\end{equation}
In particular, if $r^2 \nmid n$, we get
\begin{equation}\label{eq_coeff_n_nell^2_2}
a_{nr^2}(g_{\chi, Q}) = (\chi(r)\cdot(r+1)-\chi(r)\left(\dfrac{n}{r}\right))\cdot a_n(g_{\chi, Q}).
\end{equation}
or equivalently
$$
a_{nr}(g_1) = (\chi(r)\cdot(r+1)-\chi(r)\left(\dfrac{n}{r}\right))\cdot a_{nr}(g_2) \text{.}
$$
In particular, using the assumption $r\in S$, we get, for $n\leq C$:
\begin{equation}\label{eq_coeff_n_nell^2_3}
a_{nr}(g_1) = r\cdot \chi(r)\cdot a_{nr}(g_2) \text{.}
\end{equation}

We shall make use of the following consequence of the Sturm bound \cite{Sturm}: let $h\in S_{3/2}(\Gamma_1(4\ell^2Q^2r), \alpha)_{R/pR}$ with some Nebentypus $\alpha$. Assume that for all $n\leq 1+\frac{\frac{3}{2}+\frac{1}{2}}{12}[\SL_2(\Z):\Gamma_0(4\ell^2Q^2r)] = 1+\ell(\ell+1)(r+1)\prod_{q \mid Q \atop q \text{ prime}}q(q+1)$ we have $a_n(h)=0$, then $h = 0$. Sturm's bound (for forms with Nebentypus) is usually stated for integral weight modular forms, and modulo a prime ideal of the ring of integers of a number field (here $R$). Taking the multiplication with the usual weight $\frac{1}{2}$ and level $\Gamma_0(4)$ theta series, we reduce to the integral weight case. By decomposing $pR$ into product of powers of prime ideals, we reduce to the case of a prime power, which is proved in \cite[Corollary 2.15]{Sturm_bound_general}. Let us also note that
$$1+\ell(\ell+1)(r+1)\prod_{q \mid Q \atop q \text{ prime}}q(q+1)\leq Cr \text{.}$$

For $r > C$, there is no $n\leq C\cdot r$ divisible by $r$ with $r \mid \frac{n}{r}$. Thus,  (\ref{eq_coeff_n_nell^2_0}), (\ref{eq_coeff_n_nell^2_3}), the Sturm bound recalled above and the fact that $r \in S$ imply the following statement: 
$$g_1 = r\cdot \chi(r)\cdot g_2 \text{.}$$ 
In particular, this implies that for all $n\geq 1$ with $r\nmid n$, we have
\begin{equation}\label{eq_a_nell^3}
a_{nr^3}(g_1) = r\cdot \chi(r)\cdot a_{n r^3}(g_2) \text{.}
\end{equation}
Let us compute separately $a_{n_0 r^3}(g_1)$ and $a_{n_0 r^3}(g_2)$ to derive a contradiction. Using (\ref{eq_coeff_n_nell^2_1}) and (\ref{eq_coeff_n_nell^2_2}) successively, as well as the assumption $\left(\dfrac{n_0}{r}\right)=-1$, we get 
\begin{align*}
a_{n_0 r^3}(g_1) &= a_{n_0r^4}(g_{\chi, Q}) 
\\&= (\chi(r)\cdot(r+1)-\chi(r)\left(\dfrac{n_0r^2}{r}\right))\cdot a_{n_0r^2}(g_{\chi, Q}) - r\cdot \chi(r^2)\cdot a_{n_0}(g_{\chi, Q})
\\&= (\chi(r)^2\cdot (r+1)\cdot (r+1-\left(\dfrac{n_0}{r}\right))-r\cdot\chi(r^2))\cdot a_{n_0}(g_{\chi, Q})
\\& = \chi(r^2)\cdot ((r+1)\cdot(r+2)-r)\cdot a_{n_0}(g_{\chi, Q}) \text{.}
\end{align*}
On the other-hand, we have
\begin{align*}
a_{n_0 r^3}(g_2) &= a_{n_0r^2}(g_{\chi, Q}) 
\\& = \chi(r)\cdot (r+1-\left(\dfrac{n_0}{r}\right))\cdot a_{n_0}(g_{\chi, Q})
\\& = \chi(r)\cdot (r+2)\cdot a_{n_0}(g_{\chi, Q}) \text{.}
\end{align*}
Together with (\ref{eq_a_nell^3}), this yields:
$$\chi(r^2)\cdot ((r+1)\cdot(r+2)-r)\cdot a_{n_0}(g_{\chi, Q}) = \chi(r^2)\cdot r\cdot (r+2)\cdot a_{n_0}(g_{\chi, Q}) $$
\ie
$$2\chi(r^2)\cdot a_{n_0}(g_{\chi, Q}) = 0 \text{.}$$
Since $2\chi(r^2)$ is invertible in $R/pR$ (as $p>2$), we get $a_{n_0}(g_{\chi, Q}) = 0$, which is a contradiction since by construction of $n_0$, we have $a_{n_0}(g_{\chi, Q})\neq 0$.
\end{proof}

We can now finish the proof of Theorem \ref{main_thm_paper}.

Take $r$ in $S$. By Lemma \ref{key_lem_main_thm}, there exists $n_{r} \leq C r$, coprime with $r$, such that $a_{n_{r}r}(g_{\chi, Q})\neq 0$.
By Corollary \ref{cor_main_wt_3/2} and Proposition \ref{prop_link_h_ell_unit}, if $\Delta_r$ denotes the discriminant of $\Q(\sqrt{n_{r}r})$, it follows that $p\nmid h_\ell^-(\Delta_r)$ and $\displaystyle{\left(\frac{\Delta_r}{Q}\right)=-1}$.
Observe now that $\Delta_r \leq n_{r}r\leq C r^2$. Consequently, as $X \rightarrow \infty$, there are at least $\gg \displaystyle{\frac{\sqrt{X}}{\log(X)}}$ discriminants $\Delta_r$, possibly with multiplicity, due to the prime number theorem and the fact that $S$ has positive density. We still need to verify that there are not too many pairs $r\neq r'$ such that $\Delta_{r} = \Delta_{r'}$.

Take now $r_i<r_{j} < r_{k} $ three different primes in $S$.
Hence $r_{j}$ and $r_{k}$ are  greater than $C$, and they cannot divide both $n_{i}$; indeed, otherwise $C r_k < n_i \leq C r_i$, which leads to a contradiction.
Hence, among the discriminants $\Delta_i, \Delta_j, \Delta_k$, at least two are different. Consequently, the number of discriminants $0\leq \Delta \leq X$ for which $p \nmid h_\ell^-(\Delta)$ and  $\displaystyle{\left(\frac{\Delta}{Q}\right)=-1}$   is $\gg \displaystyle{\frac{\sqrt{X}}{\log(X)}}$ when $X \rightarrow \infty$.

\section{Proof of Corollary \ref{cor_BSD}}\label{section_cor_BSD}
Let us restrict to the situation where $\ell=11$ and $p=5$.
Let $E=X_0(11)$, which is an elliptic curve over~$\Q$. By \cite[Corollary 1.8]{Lecouturier_Wang}, if $\Delta>0$ is a fundamental discriminant such that $5\nmid \Delta$ and $\left(\dfrac{\Delta}{11}\right)=1 $, then $5\nmid h_{11}^-(\Delta)$ if and only if $\Sel_5(E^{(\Delta)}/\Q)=0$, where $E^{(\Delta)}$ is the quadratic twist of $E$ by $\Delta$ and $\Sel_5(E^{(\Delta)}/\Q)$ is the $5$-Selmer group of $E^{(\Delta)}$. Furthermore, in this case, the $5$-part of BSD holds.

Therefore, Corollary \ref{cor_BSD} would follow from Corollary \ref{main_cor} if we did not have the restriction $\left(\dfrac{\Delta}{5}\right)=-1$. The characters $\chi$ and $\chi'$ can be taken quadratic, as $11\equiv 3 \text{ (modulo }4\text{)}$. Furthermore, the form $F_{\chi}$ of Theorem \ref{prop_main_wt_3/2} has coefficients in $\Z$, as it is given explicitly in terms of generalized theta series (\cf \cite[\S 10.7]{Mao} and \cite[\S 4.1.1]{Villegas}), and we know that $F_{\chi} \equiv \theta_{\chi} \text{ (modulo }5\text{)}$. We then let 
$$
\tilde{g}_{\chi} := \frac{F_{\chi}-\theta_{\chi}}{5} \in S_{\frac{3}{2}}(\Gamma_1(4\cdot 11^2), \chi')
$$
and
$$
\tilde{g}_{\chi, 5} := \sum_{n \geq 1 \text{ s.t. }\left(\dfrac{n}{5}\right)=-1} a_n(\tilde{g}_{\chi})q^n \in S_{\frac{3}{2}}(\Gamma_1(4\cdot 11^2\cdot 5^2), \chi') \text{,}
$$
which have Fourier coefficients in  $\Z$. Note that we can allow the level to be divisible by $5$, as the coefficient field is $\C$, in which $5$ is invertible. 

By Theorem \ref{prop_main_wt_3/2}, for a fundamental discriminant $\Delta \in \mathcal{D}_{11}$ with $\left(\dfrac{\Delta}{5}\right)=-1$, we have $5 \mid a_{\Delta}(\tilde{g}_{\chi, 5})$ if and only if $5 \nmid h_{11}^-(\Delta)$. Then an identical argument as in Lemma \ref{key_lem_main_thm} using the Sturm bound shows that the analogue of Theorem \ref{main_thm_paper} with the additional condition $\left(\dfrac{\Delta}{5}\right)=-1$ holds, under the condition that there exists a fundamental discriminant $\Delta_0 \in \mathcal{D}_{11}$ satisfying $\left(\dfrac{\Delta_0}{5}\right)=-1$ and which is divisible by a prime divisor $>C = 2\cdot 11\cdot (11+1)\cdot 5\cdot (5+1) = 7920$. 
Such a $\Delta_0$ indeed exists, as it is straightforward to check that $\Delta_0 = 8237$ (which is prime) works.
\bibliography{biblio}
\bibliographystyle{plain}

\end{document}